\numberwithin{equation}{section}
\newcommand{\dm}[1]{{\displaystyle{#1}}}
\newtheorem{theorem}{\bf Theorem}[section]
\newtheorem{definition}{Definition}[section]
\newtheorem{corollary}{Corollary}[section]
\newtheorem{proposition}{Proposition}[section]
\newtheorem{lemma}{Lemma}[section]
\newtheorem{remark}{Remark}[section]
\theoremstyle{remark}
\newtheorem{exam}{\bf Example}
\newcommand{\vone}{\vskip 2ex}
\def \R{{\mathbb R}}
\def \P{{\mathscr{P}}}
\def\bmatrix#1{\left[\begin{matrix}
		#1
	\end{matrix}\right]}
\def \diag{\mathrm{diag}}
\def \A{\mathcal A}
\def \d{{\bf d}}
\def \g{{\bf g}}
\newcommand{\iu}{{i\mkern1mu}}
\def\bmatrix#1{\left[ \begin{matrix} #1 \end{matrix} \right]}
\def \noin{\noindent}
\def \R{{\mathbb R}}
\date{}
\title{A robust parameterized enhanced shift-splitting preconditioner for three-by-three block saddle point problems}
\author{Sk. Safique Ahmad\footnotemark[2] \footnotemark[1] \and Pinki Khatun \footnotemark[3] }
\begin{document}

\maketitle
 
	\begin{abstract}
		This paper proposes a new parameterized enhanced shift-splitting {\it (PESS)} preconditioner to solve the three-by-three block saddle point problem  ({\it SPP}). Additionally, we introduce a local \textit{PESS} (\textit{LPESS}) preconditioner by relaxing the \textit{PESS} preconditioner. 
  Necessary and sufficient criteria are established for the convergence of the proposed \textit{PESS} iterative process for any initial guess. Furthermore, we meticulously investigate the spectral bounds of the \textit{PESS} and \textit{LPESS} preconditioned matrices. Moreover, empirical investigations have been performed for the sensitivity analysis of the proposed \textit{PESS} preconditioner, which unveils its robustness. Numerical experiments are carried out to demonstrate the enhanced efficiency and robustness of the proposed  \textit{PESS} and \textit{LPESS} preconditioners  {compared to the existing state-of-the-art preconditioners.}
	\end{abstract}
\noindent {\bf Keywords.}
		 Krylov subspace iterative process,   Preconditioner,  Shift-splitting,  Saddle point problem,  Robust, \textit{GMRES}	
		   
		\noindent {\bf AMS subject classification.} 65F08, 65F10, 65F50  
		

\footnotetext[2]{
	Department of Mathematics, Indian Institute of Technology Indore, Khandwa Road, Indore, 453552, Madhya Pradesh, India. \texttt{Email:\,safique@iiti.ac.in}, \texttt{safique@gmail.com}}
	\footnotetext[3]{
		Research Scholar, Department of Mathematics, Indian Institute of Technology Indore, Khandwa Road, Indore, 453552, Madhya Pradesh, India. \texttt{Email:\,phd2001141004@iiti.ac.in}, \texttt{pinki996.pk@gmail.com}}
\footnotetext[1]{Corresponding author.}
 	\section{Introduction}
	Three-by-three block saddle point problems {\it (SPPs)} have recently received considerable attention due to their wide applicability in various computational research and technological applications. These domains include  the constraint optimization problems \cite{OPTMIZATION1},  
	computational fluid dynamics \cite{fluid1991, Matlab2007}, the constrained least square estimations \cite{LSP}, 
	optimal control \cite{OPTIMAL}, 
	solving  Maxwell equations \cite{MaxwellEQ}, magma-mantle dynamics \cite{magma-mantle}   
	and so on. The three-by-three block {\it SPP} is represented as follows
	\cite{HuangNA}:
	\begin{align}\label{eq11}
		\mathcal{A}{\bf u}= {\bf d}, 
	\end{align}
	where $\mathcal{A}=\bmatrix{A &B^T & 0\\-B &0& -C^T\\0 & C & 0},\,  {\bf u}=\bmatrix{x\\y\\z} , \, {\bf d}=\bmatrix{f\\g\\h},$ $A\in \R^{n\times n}$ is a symmetric positive definite {\it (SPD)} matrix, $B\in \R^{m\times n}$ and $C\in \R^{p\times m}$ are the full row rank matrices.  Here, $B^T$  {and $C^T$ stand for the transpose of  $B$ and $C$, respectively;}  $f\in \R^n, \, g\in\R^m$ and $h\in \R^p$ are known vectors. Under the stated assumptions on block matrices A, B and C,  $\mathcal{A}$ is nonsingular, which ensures the existence of a unique solution for the system \eqref{eq11}.
	\vspace{0.2 cm}
	
	Due to the growing popularity of three-by-three block {\it SPPs} in a variety of applications, it is essential to determine the efficient and robust numerical solution of three-by-three block {\it SPPs} \eqref{eq11}. However,  the indefiniteness and poor spectral properties of  $\mathcal{A}$ makes challenging to determine the solution  of the system \eqref{eq11}. Furthermore,  $\A$ is generally sparse and large; therefore, iterative processes become superior to direct methods. The Krylov subspace iterative process is preferable among other iterative processes investigated widely in the literature due to their minimal storage requirement and feasible implementation \cite{wathen2015}. 
	Slow convergence of the Krylov subspace iterative process is a major drawback for  system of linear equations of large dimensions. Moreover, the saddle point matrix $\A$ can be very sensitive, which leads to a significant
	slowdown in the solution algorithm. Therefore, it is important to develop novel, efficient and robust preconditioners for the fast convergence of the Krylov subspace iterative process, which can handle the sensitivity of the problem \eqref{eq11} when the matrix $\mathcal{A}$ is ill-conditioned.  
 	
	
	Consider the following partitioning of  $\A:$ 
	\begin{align}\label{EQ13}
		\left[ \begin{array}{cc|c}
			A &B^T  & 0 \\
			-B & 0 & -C^T  \\ 
			\hline
			0 & C & \multicolumn{1}{c}{%
				0} \\
		\end{array} \right] \quad \mbox{and}\quad \left[ \begin{array}{c|cc}
			A &B^T  & 0 \\
			\hline
			-B & 0 & -C^T  \\ 
			0 & C & 0   \\
		\end{array} \right].
	\end{align}
	Then the linear system \eqref{eq11} can be identified as the standard two-by-two block {\it SPP}  {\cite{MBenzi2005}}. 
	In the last decades, colossal attention has been given to the development of numerical solution methods for standard two-by-two block {\it SPP}, such as Uzawa methods  {\cite{ZZBaiUzawa, Uzawa2014}, Hermitian and skew-Hermitian splitting-type (\textit{HSS}-type) methods \cite{HSS, HSS2019}, successive overrelaxation-type (\textit{SOR}-type) methods \cite{SOR2001, GSOR2005} and} null space methods \cite{nullspace2001, nullspace1998}.  {For a more recent comprehensive survey see \cite{ZZBai2021}.}  However, the properties of the submatrices in \eqref{EQ13} are different from the standard two-by-two block {\it SPP}. Notice that the leading block is not \textit{SPD} in the first partitioning. The second partitioning highlights that the $(1, 2)$ block is rank deficient and the $(2, 2)$ block is skew-symmetric. In contrast, in the standard two-by-two block {\it SPP}, $(1,2)$  block is full row rank, and the $(2, 2)$ block is generally symmetric positive semidefinite. Therefore, the existing literature for solving the two-by-two block {\it SPPs}  (see \cite{MBenzi2005}) may not be applied to solve \eqref{eq11} directly and it is essential to develop new preconditioners for three-by-three block {\it SPPs} that exploit the specific structure of the saddle point matrix $\A$, which is sensitive in nature. 
 
	Recently, various effective preconditioners have been developed in the literature for solving three-by-three block {\it SPPs}.   For instance, \citet{HuangNA}  have studied the exact block diagonal {\it (BD)} and inexact {\it BD (IBD)} preconditioners $\P_{BD}$ and 
	$\P_{IBD}$ in the following forms:
	\begin{align}
		\P_{BD}=\bmatrix{A& 0 &0\\0& S& 0\\0 &0&CS^{-1}C^T}\quad \mbox{and} \quad \P_{IBD}=\bmatrix{\widehat{A}& 0 &0\\0&\widehat{S}& 0\\0 &0&C\widehat{S}^{-1}C^T}, 
	\end{align}
	where $S= BA^{-1}B^T,$ $\widehat{A}$ and $\widehat{S}$ are {\it SPD} approximations of $A$ and $S,$ respectively. Although the spectrum of the preconditioned matrices  $\P^{-1}_{BD}\A$ and $\P^{-1}_{IBD}\A$ have good clustering properties, these preconditioners have certain shortcomings, such as they are time-consuming, expensive, require excessive number of iterations and condition numbers are very large.  For more details, see the paper \cite{HuangNA}.  {Inspired by the \textit{HSS} iteration method \cite{HSS}, \citet{APPS2021} split the coefficient matrix \(\A\) and  presented the  alternating positive semidefinite splitting (\textit{APSS}) iteration method. They proved the iteration method is unconditional convergence and proposed the corresponding \textit{APSS} preconditioner to solve the system \eqref{eq11}. Moreover, to improve the effectiveness of the \textit{APSS} preconditioner, many relaxed and modified versions of \textit{APSS} have been designed; see \cite{MAPSS2023, SRAPSS2023, RAPSS2024}. For instance, by relaxing the $(1,1)$ block and introducing a new parameter $\beta>0$  in the \textit{APSS} preconditioner, \citet{MAPSS2023} proposed the following modified \textit{APSS} (\textit{MAPSS}) preconditioner: 
     \begin{equation}
     \P_{MAPSS}=\bmatrix{A & B^T & -\frac{1}{\alpha}B^TC^T\\ -B & \alpha I &-C^T\\ 0 &C & \beta I},
 \end{equation}
 where $\alpha>0$ and $I$  stands for the identity matrix of the appropriate dimension. 
Motivated by the work in \cite{SPLITTING2011} and by incorporating the ideas of shifting and the \textit{BD} preconditioner, the authors in \cite{Twoblock23} proposed two preconditioners along with their inexact variants. One of these preconditioners is given by:
 \begin{equation}
    \P_{SL}=\bmatrix{A & B^T & 0\\ -B & C^TC & 0\\ 0 &C & I}.
 \end{equation} 
%
\noindent{Three block preconditioners are developed by \citet{ANOTE2020} for the system \eqref{eq11} with the equivalent symmetric coefficient matrix.}
It is also demonstrated in \cite{ANOTE2020} that the preconditioned matrices possess no more than three distinct eigenvalues. \citet{NaHuangVUZ} proposed a variant of the Uzawa iterative method for the theree-by-three block \textit{SPP} by introducing two variable parameters. Additionally, \citet{NaHuangUZ19} generalized the well-known Uzawa method to solve the linear system \eqref{eq11} and propose the inexact Uzawa method. In addition to the preconditioners discussed above, recent literature \cite{BDMaryam, PBSPD2022, NLAA2024} have introduced several other preconditioners to solve the three-by-three block \textit{SPP} \eqref{eq11}.}

 {The shift-splitting {\it (SS)} preconditioners were initially developed for a non-Hermitian system of linear equations by \citet{BaiSS} and later for two-by-two block {\it SPPs} \cite{CaoSS2014, GSS2015, CaoSS2017}}.
 Recently, \citet{CAOSS19} enhanced this idea and introduced the following {\it SS} preconditioner $\P_{SS}$ and  relaxed {\it SS} ({\it RSS}) preconditioner $\P_{RSS}$ for the three-by-three block {\it SPP} \eqref{eq11}:
		\begin{equation}\label{eq14}
			\P_{SS}=\frac{1}{2}\bmatrix{\alpha I+A& B^T &0\\-B&\alpha I& -C^T\\0 &C&\alpha I} \,\, \mbox{and}\, \, \P_{RSS}=\frac{1}{2}\bmatrix{A& B^T &0\\-B&\alpha I& -C^T\\0 &C&\alpha I},
		\end{equation}
		where $\alpha>0$ and verified unconditionally convergence of the associated {\it SS} iterative process.  \citet{wang2019GSS} generalized the {\it SS} preconditioner by introducing a new parameter $\beta >0$ in the $(3, 3)$ block.    {By merging the lopsided and shift technique, a lopsided {\it SS} preconditioner is presented by \citet{LSS22}}. 
  Further, \citet{EGSS23}  developed  {the following} extensive generalized {\it SS (EGSS)} preconditioner: 
		\begin{equation}\label{eq15}
			\P_{EGSS}=\frac{1}{2}\bmatrix{\alpha P+A& B^T &0\\-B&\beta Q& -C^T\\0 &C&\gamma W}
		\end{equation}
		for the three-by-three block {\it SPP} \eqref{eq11}, where $\alpha, \beta, \gamma> 0$ and $P, Q, W$ are {\it SPD} matrices and investigated its convergent properties.  { By relaxing the $(1,1)$ block and eliminating  the prefactor $1/2,$  \citet{IEGSS2024} have proposed relaxed and preconditioned generalized shift-splitting 
(\textit{RPGSS}) preconditioner 
\begin{eqnarray}
    \P_{RPGSS}=\bmatrix{A & B^T & 0 \\ -B & \beta Q& -C^T \\ 0 & C & \gamma  W},
\end{eqnarray}
 where $Q\in \R^{m\times m}$ and $ W\in \R^{p\times p}$ are \textit{SPD} and $ \beta, \gamma>0$. }
	
	
	Despite exhibiting favorable performance of  {\it SS, RSS} and {\it EGSS} preconditioners,  they do not outperform {\it BD} and {\it IBD} preconditioners. Therefore, to improve the convergence speed and efficiency of the preconditioners $\P_{SS},\P_{RSS}$ and $\P_{EGSS},$  this paper presents a parameterized enhanced shift-splitting (\textit{PESS}) preconditioner by introducing a parameter $s> 0$ for there-by-three block {\it SPPs.} 
	As per the direct correlation between the rate of convergence in Krylov subspace iterative processes and the spectral properties of the preconditioned matrix, we perform in-depth spectral distribution of the \textit{PESS} preconditioned matrix.
 We  summarize  the main contributions of this paper as follows:
	
	\begin{itemize}
		\item We propose the \textit{PESS} iterative process along with its associated \textit{PESS} preconditioner and its relaxed version known as the local \textit{PESS} (\textit{LPESS}) preconditioner to solve the three-by-three block {\it SPP} \eqref{eq11}. The proposed preconditioners generalize the existing {\it SS} preconditioners, for example,  \cite{CAOSS19, wang2019GSS, EGSS23}. 
		
		\item General framework is given on necessary and sufficient criteria for the convergence of the \textit{PESS} iterative process. These investigations also encompass the unconditional convergence of other exiting  {\it SS} preconditioners in the literature \cite{CAOSS19, EGSS23}.
		\item We have conducted a comprehensive analysis of the spectral distribution for the \textit{PESS}  {and \textit{LPESS} preconditioned matrices}. Our framework allows us to derive the spectral distribution for the  {existing} {\it SS} and {\it EGSS} preconditioned matrices.
		\item We empirically show that our proposed preconditioner significantly reduces the condition number of the ill-conditioned saddle point matrix $\A$. Thereby establishing an efficiently solvable, well-conditioned system.
		
		\item Numerical experiments show that the proposed \textit{PESS} and \textit{LPESS} preconditioners outperform all the compared baseline preconditioners. Moreover, we perform sensitivity analysis by perturbing the ill-conditioned matrix $\A$ in \eqref{eq11}, demonstrating the robustness of the proposed preconditioner.
	\end{itemize}
	\vspace{0.2 cm}
	The structure of this article is as follows. In Section \ref{sec2}, we propose the \textit{PESS} iterative process and the associated \textit{PESS} preconditioner. Section \ref{sec3} is devoted to investigating the convergence of the \textit{PESS} iterative process. In Section \ref{sec4}, we investigate the spectral distribution of the \textit{PESS} preconditioned matrix. In Section \ref{sec:LPESS}, we present the \textit{LPESS} preconditioner.  {In Section \ref{Sec:parameter}, we discuss strategies for selecting the 
 appropriate parameters for the proposed preconditioners.}  In Section \ref{sec5}, we conduct numerical experiments to illustrate the computational efficiency and robustness of the proposed preconditioner. In the end, a brief concluding remark is provided in Section \ref{sec6}.
	
	\vspace{0.2 cm}
	
	\noin{\bf Notation.}  The following notations are adopted throughout the paper. For any matrix $A\in \R^{n\times n},$ $A^T, \sigma(A)$ and $\vartheta(A)$ denote  transpose,  spectrum and spectral radius of $A,$ respectively. For any $x\in \mathbb{C}^{n},$ $x^*$ denotes its conjugate transpose. The symbol $\diag(A, B)$ represents a block diagonal matrix with diagonal blocks $A$ and $B.$ The real and imaginary parts of any complex number $z$ are represented by $\Re(z)$ and $\Im(z)$, respectively.  For any matrix $A\in \R^{n\times n}$ having real eigenvalues,  $\lambda_{\max}(A)$ and $\lambda_{\min}(A)$ denote the maximum and minimum eigenvalue of $A,$ respectively. The notation $A> 0$ signifies that $A$ is {\it SPD} and $A>B$ identifies $A-B>0.$ \\\noin{\bf Abbreviations used in the paper are as follows:}\\
	{\it SPP}: saddle point problem, {\it SPD}: symmetric positive definite, {\it BD}: block diagonal, {\it IBD}: inexact block diagonal,  {\textit{MAPSS}: modified alternating positive semidefinite splitting},   {\it SS}: shift-splitting, {\it RSS}: relaxed shift-splitting, {\it EGSS}: extensive generalized shift-splitting,  {{\it RPGSS}: relaxed and preconditioned generalized shift-splitting},  \textit{PESS}: parameterized enhanced shift-splitting, \textit{LPESS}: local parameterized enhanced shift-splitting, {\it GMRES}: generalized minimum residual process.

 \section{The parameterized enhanced shift-splitting  (\textit{PESS}) iterative process and the  preconditioner}\label{sec2} 
In this section, we proposed a parameterized enhanced shift-splitting (\textit{PESS}) iterative process and the corresponding \textit{PESS} preconditioner.
Let $s>0$ be a real number. Then, we split the  matrix $\A$ in the form 
	\begin{equation}\label{eq21}
		\A=(\Sigma+s\A)-(\Sigma-(1-s)\A)=: \P_{PESS}-\mathcal{Q}_{PESS}, \, \mbox{where} 
	\end{equation}
	\[\P_{PESS} =\bmatrix{\Lambda_1+sA & sB^T & 0\\ -sB &\Lambda_2&-sC^T\\ 0& sC &\Lambda_3},
	\mathcal{Q}_{PESS}= \bmatrix{\Lambda_1-(1-s)A & -(1-s)B^T & 0\\ (1-s)B &\Lambda_2&(1-s)C^T\\ 0& -(1-s)C &\Lambda_3},\]
	$\Sigma=\diag(\Lambda_1,\Lambda_2,\Lambda_3)$ 	and $\Lambda_1,\Lambda_2,\Lambda_3$ are {\it SPD} matrices. The matrix $\P_{PESS}$ is nonsingular for $s>0.$ The special matrix splitting \eqref{eq21} leads us to the subsequent iterative process for solving the three-by-three block {\it SPP} given in \eqref{eq11}.
	
	{\bf The \textit{PESS} iterative process.} {\it Let $s$ be a positive constant and let $\Lambda_1\in \R^{n\times n},\Lambda_2\in \R^{m\times m} {,}$  $\Lambda_3\in \R^{p\times p}$ be {\it SPD} matrices. For any initial guess ${\bf u}_0\in \R^{n+m+p}$ and $k=0,1,2\ldots,$ until  a specified termination criterion is fulfilled, we compute 
		\begin{align}\label{eq22}
			{\bf u}_{k+1}=\mathcal{T}{\bf u}_k+{\bf c},
		\end{align}	    
			where   $ {\bf u}_{k}=[x_{k}^T, y_{k}^T, z_{k}^T]^T, {\bf c}=\P_{PESS}^{-1}\d$
			and  $\mathcal{T}=\P_{PESS}^{-1}\mathcal{Q}_{PESS}$ is called the iteration matrix for the PESS iterative process. }
		
	Moreover, the matrix splitting \eqref{eq22} introduces a new preconditioner $\P_{PESS}$, identified as {\textit{PESS}} preconditioner. 	 {This preconditioner generalizes the previous work in the literature;  for example, refer to} \cite{CAOSS19,wang2019GSS, EGSS23} for specific choices of $\Lambda_1,\Lambda_2,\Lambda_3$ and $s,$ which are listed in Table 1.
		\begin{table}[ht!]
			\centering
			\caption{$\P_{PESS}$ as a generalization of the above {\it SS} preconditioners for different choices of $\Lambda_1,\Lambda_2,\Lambda_3$ and $s.$}\label{tab1}
			\begin{tabular}{c|c|c|c|c|c}
				 \hline
				$\P_{PESS}$  & $\Lambda_1$& $\Lambda_2$&$\Lambda_3$ &$s$& memo\\ [1ex]
				\hline
				$\P_{SS}$ \cite{CAOSS19}& $\frac{1}{2}\alpha I$&$\frac{1}{2}\alpha I$ &$\frac{1}{2}\alpha I$ &$s=\frac{1}{2}$&$\alpha>0$\\  [1ex]
				\hline
				$\P_{GSS}$ \cite{wang2019GSS}& $\frac{1}{2}\alpha I$& $\frac{1}{2}\alpha I$&$\frac{1}{2}\beta I$ &$s=\frac{1}{2}$&$\alpha,\beta>0$\\ [1ex]
				\hline
				$\P_{EGSS}$ \cite{EGSS23}&$\frac{1}{2}\alpha P$ &$\frac{1}{2}\beta Q$ &$\frac{1}{2}\gamma W$ &$s=\frac{1}{2}$& $P,Q,W$ are {\it SPD}  \\ &&&&&  matrices and $\alpha,\beta,\gamma>0$\\
				   \hline
			\end{tabular}
		\end{table}

		In the implementation of the {\textit{PESS}} iterative process or the {\textit{PESS}} preconditioner to enhance  the rate of convergence of the Krylov subspace iterative process like {\it GMRES}, at each iterative step,  we  solve the following system of linear equations:
		\begin{eqnarray}\label{eq25}
			\P_{PESS}w= r,
		\end{eqnarray} 
		where  $r= [r_1^T, r_2^T, r_3^T]^T\in \R^{n+ m+ p}$ and $w= [w_1^T, w_2^T, w_3^T]^T\in \R^{n+ m+ p}.$ Specifying $\widehat{X}=\Lambda_2+ s^2C^T\Lambda_3^{-1}C$ and $\widetilde{A}=\Lambda_1+ sA+ s^2B^T\widehat{X}^{-1}B,$ then  $\P_{PESS}$ can be written in the following way:
		\begin{eqnarray}\label{eq26}
			\P_{PESS}=\bmatrix{I &sB^T\widehat{X}^{-1}&0\\0&I&-sC^T\Lambda_3^{-1}\\0&0&I}\bmatrix{\widetilde{A}&0&0\\0&\widehat{X}&0\\0 &0 &\Lambda_3}\bmatrix{I&0&0\\-s\widehat{X}^{-1}B &I&0\\0&s\Lambda_3^{-1}C&I}.
		\end{eqnarray}
		The decomposition \eqref{eq26} leads us to the following Algorithm \ref{alg1} to determine the solution of the system \eqref{eq25}.
		\begin{algorithm}
			\caption{Computation of  $w$ from $\P_{PESS}w=r$}\label{alg1}
			\begin{algorithmic}[1]
				\State Solve $\widehat{X}v_1=r_2+sC^T\Lambda_3^{-1}r_3$ to find $v_1;$
				\State Compute $v=r_1-sB^Tv_1;$
				\State Solve $\widetilde{A}w_1=v$ to find $w_1;$
				\State Solve $\widehat{X}v_2=sBw_1$ to find $v_2$; 
				\State Compute $w_2=v_1+v_2;$
				\State Solve $\Lambda_3w_3=r_3-sCw_2$ for $w_3.$
			\end{algorithmic}
		\end{algorithm}
		The implementation of Algorithm \ref{alg1} requires to solve two linear subsystems with coefficient matrices  $\widehat{X}$ 
		and $\widetilde{A}$. Since $\Lambda_1, \Lambda_2, \Lambda_3, \widehat{X}$ and $\widetilde{A}$ are {\it SPD} and $s> 0$,  we can apply the Cholesky factorization to solve 
		these linear subsystems exactly or inexactly by the preconditioned conjugate gradient method. 
\section{Convergence analysis of the \textit{PESS} iterative process}\label{sec3}
		
		In this section, we investigate the convergence of the {\textit{PESS}} iterative process \eqref{eq22}. To achieve this aim, the following definition and lemmas are crucial.
		\begin{definition}
			A	matrix $\A$ is  called positive stable if $\Re(\lambda)>0$  for all $\lambda\in \sigma(\A)$.  
		\end{definition}
  \begin{lemma}\cite{CAOSS19}\label{lm21}
			Let $A\in\R^{n\times n}$ be a {\it SPD} matrix and let $B\in \R^{m\times n}$ and $C\in \R^{p\times m}$  be full row rank matrices. Then, the saddle point matrix $\A$ is positive stable.
		\end{lemma}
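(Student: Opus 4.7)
The plan is to show, for any eigenpair $(\lambda,v)$ of $\A$ with $v=[x^T,y^T,z^T]^T\neq 0$, that $\Re(\lambda)>0$. The natural tool is the scalar $v^*\A v$, since $\A v=\lambda v$ gives $v^*\A v=\lambda\|v\|^2$, so $\Re(\lambda)=\Re(v^*\A v)/\|v\|^2$. The first step is therefore to compute $v^*\A v$ explicitly. Writing out the three block rows of $\A v=\lambda v$ as
\begin{align*}
Ax+B^Ty &= \lambda x,\\
-Bx-C^Tz &= \lambda y,\\
Cy &= \lambda z,
\end{align*}
one obtains $v^*\A v = x^*Ax + (x^*B^Ty - y^*Bx) + (z^*Cy - y^*C^Tz)$. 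The parenthesised terms are of the form $w-\bar w$ and hence purely imaginary, so that $\Re(v^*\A v)=x^*Ax$.

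Since $A$ is SPD, this already yields $\Re(\lambda)\ge 0$ for every eigenvalue. The remaining and slightly more delicate step is to upgrade this to strict inequality, and this is where the full row rank hypotheses on $B$ and $C$ enter. I would argue by contradiction: suppose $\Re(\lambda)=0$. Then $x^*Ax=0$ forces $x=0$ by positive definiteness of $A$. Substituting $x=0$ in the first eigenvalue equation gives $B^Ty=0$; because $B$ has full row rank, $B^T$ has trivial null space, so $y=0$. Substituting both $x=0$ and $y=0$ in the second equation gives $C^Tz=0$, and by the full row rank of $C$ we get $z=0$. Hence $v=0$, contradicting that $v$ is an eigenvector.

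The main obstacle here is essentially bookkeeping rather than mathematical depth: one must ensure the cross terms really cancel to a purely imaginary quantity and that the chain $x=0\Rightarrow y=0\Rightarrow z=0$ exploits the rank assumptions correctly (in particular that $B^T$ and $C^T$ are injective, which is where the hypothesis that $B$ and $C$ have full row rank is crucial). Note that the case $\lambda=0$ is automatically excluded either by the nonsingularity of $\A$ already recorded in the introduction, or equivalently by the same $x\Rightarrow y\Rightarrow z$ chain. Thus every $\lambda\in\sigma(\A)$ satisfies $\Re(\lambda)>0$, which is exactly the positive stability of $\A$.
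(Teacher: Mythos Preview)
Your proof is correct. Note, however, that the paper does not supply its own proof of this lemma: it is quoted from \cite{CAOSS19} without argument, so there is nothing in the present paper to compare against. What you have written is the standard route---observe that the symmetric part of $\A$ equals $\diag(A,0,0)$, so $\Re(v^{*}\A v)=x^{*}Ax\ge 0$, and then use the rank hypotheses to rule out equality via the cascade $x=0\Rightarrow B^{T}y=0\Rightarrow y=0\Rightarrow C^{T}z=0\Rightarrow z=0$. Every step is sound; in particular your identification of the cross terms $x^{*}B^{T}y-y^{*}Bx$ and $z^{*}Cy-y^{*}C^{T}z$ as purely imaginary is exactly right since $B,C$ are real and each such term is $w-\overline{w}$.
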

		\begin{lemma}\label{lm22}
			Let  $A\in\R^{n\times n}$ be a {\it SPD} matrix and let $B\in \R^{m\times n}$ and $C\in \R^{p\times m}$ be full row rank matrices. Then $\Sigma^{-1}\A$ $($or $\Sigma^{-\frac{1}{2}}\A \Sigma^{-\frac{1}{2}})$ is positive stable.
		\end{lemma}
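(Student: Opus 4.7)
The plan is to reduce the claim to Lemma \ref{lm21} via a symmetric congruence. Since $\Sigma=\diag(\Lambda_1,\Lambda_2,\Lambda_3)$ is block diagonal with \textit{SPD} blocks, the square root $\Sigma^{1/2}$ exists and is \textit{SPD}, so
\[
\Sigma^{-1}\A \;=\; \Sigma^{-1/2}\bigl(\Sigma^{-1/2}\A\,\Sigma^{-1/2}\bigr)\Sigma^{1/2},
\]
which shows that $\Sigma^{-1}\A$ and $\widetilde{\A}:=\Sigma^{-1/2}\A\,\Sigma^{-1/2}$ are similar and therefore share the same spectrum. It suffices to prove $\widetilde{\A}$ is positive stable.

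Next, I would compute the block form of $\widetilde{\A}$ explicitly. Setting $\widetilde{A}=\Lambda_1^{-1/2}A\Lambda_1^{-1/2}$, $\widetilde{B}=\Lambda_2^{-1/2}B\Lambda_1^{-1/2}$, and $\widetilde{C}=\Lambda_3^{-1/2}C\Lambda_2^{-1/2}$, a direct computation yields
\[
\widetilde{\A}=\bmatrix{\widetilde{A} & \widetilde{B}^T & 0\\ -\widetilde{B} & 0 & -\widetilde{C}^T\\ 0 & \widetilde{C} & 0},
\]
so $\widetilde{\A}$ has exactly the same structural pattern as $\A$. The point of this reformulation is that the hypotheses of Lemma \ref{lm21} transfer to the tilded blocks: $\widetilde{A}$ is \textit{SPD} because it is a symmetric congruence of the \textit{SPD} matrix $A$ by the invertible $\Lambda_1^{-1/2}$; and $\widetilde{B}$, $\widetilde{C}$ are obtained from $B$, $C$ by multiplication on each side by invertible square matrices, which preserves rank, so they inherit the full row rank property.

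With these verifications in hand, Lemma \ref{lm21} applied to $\widetilde{\A}$ immediately gives $\Re(\mu)>0$ for every $\mu\in\sigma(\widetilde{\A})$. Combined with the similarity relation from the first paragraph, this yields $\Re(\mu)>0$ for every $\mu\in\sigma(\Sigma^{-1}\A)$, proving that $\Sigma^{-1}\A$ (equivalently $\Sigma^{-1/2}\A\Sigma^{-1/2}$) is positive stable. I do not anticipate a real obstacle here; the only point that requires minor care is checking that the congruence indeed preserves the three-by-three block structure and the sign pattern of the off-diagonal blocks, which follows because $\Sigma^{1/2}$ is block diagonal so each off-diagonal block of $\A$ is multiplied on each side by the corresponding diagonal factor of $\Sigma^{-1/2}$ without mixing.
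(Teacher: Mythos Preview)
Your proposal is correct and follows essentially the same approach as the paper: both arguments compute the block form of $\Sigma^{-1/2}\A\Sigma^{-1/2}$, verify that the resulting blocks inherit the \textit{SPD} and full row rank hypotheses, invoke Lemma~\ref{lm21} on the transformed matrix, and then pass to $\Sigma^{-1}\A$ by similarity. Your write-up is in fact slightly more careful than the paper's, making the similarity relation and the rank-preservation step explicit.
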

		\proof 
	Since $\Sigma^{-\frac{1}{2}}=\diag(\Lambda_1^{-\frac{1}{2}}, \Lambda_2^{-\frac{1}{2}}, \Lambda_3^{-\frac{1}{2}}),$	 we obtain
		\begin{eqnarray}
			\Sigma^{-\frac{1}{2}}\A \Sigma^{-\frac{1}{2}}=\bmatrix{\Lambda_1^{-\frac{1}{2}}A\Lambda_1^{-\frac{1}{2}}& \Lambda_1^{-\frac{1}{2}}B^T\Lambda_2^{-\frac{1}{2}}&0\\-\Lambda_{2}^{-\frac{1}{2}}B\Lambda_1^{-\frac{1}{2}}& 0&\Lambda_2^{-\frac{1}{2}}C^T\Lambda_3^{-\frac{1}{2}}\\ 0&\Lambda_3^{-\frac{1}{2}}C\Lambda_2^{-\frac{1}{2}}&0}.
		\end{eqnarray}
		 { Given that the} matrices $\Lambda_1,\Lambda_2$ and $\Lambda_3$ are {\it SPD}, then $\Lambda_1^{-\frac{1}{2}}A\Lambda_1^{-\frac{1}{2}}$ is {\it SPD}.  { Furthermore, the matrices}  $\Lambda_2^{-\frac{1}{2}}B\Lambda_1^{-\frac{1}{2}}$ and $\Lambda_3^{-\frac{1}{2}}C\Lambda_2^{-\frac{1}{2}}$ are of full row rank. Consequently, the block structure of  $\A$ and the matrix $ \Sigma^{-\frac{1}{2}}\A \Sigma^{-\frac{1}{2}}$ are identical. 
		Then by Lemma \ref{lm21}, $\A$ is positive stable implies that the matrix  $ \Sigma^{-\frac{1}{2}}\A \Sigma^{-\frac{1}{2}}$ and  consequently, $\Sigma^{-1}\A$ is positive stable. $\blacksquare$
		
		By  \cite{YSAAD},  the stationary iterative process \eqref{eq22} converges to the exact solution of the three-by-three block {\it SPP} \eqref{eq11} for any initial guess vector if and only if   $|\vartheta(\mathcal{T})| < 1,$ where $\mathcal{T}$ is the iteration matrix.
		Now, we establish an if and only if condition that precisely determines the convergence of the {\textit{PESS}} iterative process.
		\begin{theorem}\label{th21}
			Let  $A\in\R^{n\times n}$ be a {\it SPD} matrix and let $B\in \R^{m\times n}$ and $C\in \R^{p\times m}$ be full row rank matrices and $s>0$.  Then, the PESS iterative process \eqref{eq22} converges to the unique solution of the three-by-three block SPP \eqref{eq11} if and only if 
			\begin{equation}\label{eq211}
				(2s-1)|\mu|^2+2\Re(\mu)>0, \,\,\forall  \, \mu\in \sigma(\Sigma^{-\frac{1}{2}}\A\Sigma^{-\frac{1}{2}}).
			\end{equation}
		\end{theorem}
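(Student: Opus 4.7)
The plan is to reduce the spectral radius condition $\vartheta(\mathcal{T}) < 1$ to a scalar inequality by means of a similarity transformation and a Möbius-type evaluation.

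First, I would rewrite the iteration matrix in a more convenient form. Since $\mathcal{Q}_{PESS} = \P_{PESS} - \A$, the iteration matrix is $\mathcal{T} = \P_{PESS}^{-1}\mathcal{Q}_{PESS} = (\Sigma + s\A)^{-1}(\Sigma - (1-s)\A)$. Conjugating by $\Sigma^{1/2}$ (which exists because $\Sigma$ is block-diagonal \textit{SPD}), I would verify that
\[
\Sigma^{1/2}\mathcal{T}\Sigma^{-1/2} = (I + sM)^{-1}\bigl(I - (1-s)M\bigr), \qquad M := \Sigma^{-1/2}\A\Sigma^{-1/2}.
\]
Thus $\mathcal{T}$ and $(I+sM)^{-1}(I-(1-s)M)$ are similar and share the same spectrum.

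Next, I would argue that because $(I+sM)^{-1}$ and $(I-(1-s)M)$ are polynomials in $M$, they commute and are simultaneously triangularizable with $M$. By Lemma \ref{lm22}, $M$ is positive stable, so every $\mu \in \sigma(M)$ satisfies $\Re(\mu) > 0$; in particular, for $s > 0$ the factor $1 + s\mu$ never vanishes, so the inverse is well defined at the spectral level. Consequently,
\[
\sigma(\mathcal{T}) = \left\{\,\lambda(\mu) := \frac{1 - (1-s)\mu}{1 + s\mu} \;:\; \mu \in \sigma(M)\,\right\}.
\]

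Then I would translate $|\lambda(\mu)| < 1$ into the stated inequality by a direct algebraic computation. Writing $\mu = a + b\iu$ with $a = \Re(\mu)$, the condition $|1 - (1-s)\mu|^2 < |1 + s\mu|^2$ expands to
\[
\bigl(1 - (1-s)a\bigr)^2 + (1-s)^2 b^2 < (1+sa)^2 + s^2 b^2,
\]
and cancelling common terms reduces this, after collecting the coefficients of $a^2 + b^2 = |\mu|^2$ and of $a = \Re(\mu)$, to
\[
(2s-1)|\mu|^2 + 2\Re(\mu) > 0.
\]
Combining the three steps: $\vartheta(\mathcal{T}) < 1$ holds iff $|\lambda(\mu)| < 1$ for every $\mu \in \sigma(M)$, which holds iff \eqref{eq211} is satisfied. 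Invoking the standard convergence theorem for stationary iterations from \cite{YSAAD} then completes the equivalence.

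The calculations are routine; the only delicate point is the commutativity/similarity step in the second paragraph, i.e.\ ensuring that the eigenvalues of $\mathcal{T}$ can be written as a rational function of individual eigenvalues of $M$ without losing any part of the spectrum (even when $M$ is nonnormal and only trigonalizable). Once the joint upper-triangularization of $I + sM$ and $I - (1-s)M$ via the Schur form of $M$ is set up, the diagonal entries of the Schur form of $\mathcal{T}$ are exactly the $\lambda(\mu)$, so no eigenvalues are lost; this is where I would be most careful in the write-up.
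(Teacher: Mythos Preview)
Your proposal is correct and follows essentially the same route as the paper: conjugate $\mathcal{T}$ by $\Sigma^{1/2}$ to obtain the rational function $(I+sM)^{-1}(I-(1-s)M)$ of $M=\Sigma^{-1/2}\A\Sigma^{-1/2}$, read off the eigenvalues as $\lambda(\mu)=\frac{1-(1-s)\mu}{1+s\mu}$, and reduce $|\lambda(\mu)|<1$ to \eqref{eq211} by the same algebraic expansion. If anything, your write-up is more careful than the paper's at the spectral-mapping step, where the paper simply asserts the eigenvalue relation without mentioning the Schur/commutativity argument you outline.
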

		\proof
		From \eqref{eq22} we have 
		\begin{equation}
			\mathcal{T}=\P_{PESS}^{-1}\mathcal{Q}_{PESS}=(\Sigma+s\A)^{-1}(\Sigma-(1-s)\A),
		\end{equation}
		where $\Sigma=\diag(\Lambda_1,\Lambda_2,\Lambda_3).$ Given that  $\Lambda_1,\Lambda_2$ and $\Lambda_3$ are {\it SPD}, then 
		\begin{equation}\label{eq213}
			\mathcal{T}=\Sigma^{-\frac{1}{2}}(I+s\Sigma^{-\frac{1}{2}}\A\Sigma^{-\frac{1}{2}})^{-1}(I-(1-s)\Sigma^{-\frac{1}{2}}\A\Sigma^{-\frac{1}{2}})\Sigma^{\frac{1}{2}}.
		\end{equation}
		Thus,  (\ref{eq213}) shows that the iteration matrix $\mathcal{T}$ is similar to $\widetilde{\mathcal{T}},$ where 
		\[\widetilde{\mathcal{T}}=(I+s\Sigma^{-\frac{1}{2}}\A\Sigma^{-\frac{1}{2}})^{-1}(I-(1-s)\Sigma^{-\frac{1}{2}}\A\Sigma^{-\frac{1}{2}}).
		\]
		Let $\theta$ and $\mu$ be the eigenvalues of the matrices $\widetilde{\mathcal{T}}$ and $\Sigma^{-\frac{1}{2}}\A\Sigma^{-\frac{1}{2}},$ respectively. Then, it is easy to show that 
		\[\theta=\frac{1-(1-s)\mu}{1+s\mu}.
		\]
		Since $\Sigma^{-\frac{1}{2}}\A\Sigma^{-\frac{1}{2}} $ is nonsingular then  $\mu\neq 0$ and  we have
		\begin{equation}\label{eq215}
			|\theta|=\frac{|1-(1-s)\mu|}{|1+s\mu|}=\sqrt{\frac{(1-(1-s)\Re(\mu))^2+(1-s)^2\Im(\mu)^2}{(1+s\Re(\mu))^2+s^2\Im(\mu)^2}}.
		\end{equation}
		From (\ref{eq215}), it is clear that  $|\theta|<1$  if and only if 
		\begin{equation*}
			(1-(1-s)\Re(\mu))^2+(1-s)^2\Im(\mu)^2<(1+s\Re(\mu))^2+s^2\Im(\mu)^2.
		\end{equation*}
		This implies $|\theta|<1$  if and only if  $(2s-1)|\mu|^2+2\Re(\mu)>0.$  Since, $\vartheta(\mathcal{T})=\vartheta{\widetilde(\mathcal{T}}),$ the proof is conclusive. $\blacksquare$
		
		Using the condition in Theorem \ref{th21}, next, we present two sufficient conditions that ensure the convergence of the {\textit{PESS}} iterative process. The first one is presented as follows.
		\begin{corollary}\label{coro1}  Let  $A\in\R^{n\times n}$ be a {\it SPD} matrix and let $B\in \R^{m\times n}$ and $C\in \R^{p\times m}$ be full row rank matrices and if $s\geq \frac{1}{2},$ then $\vartheta(\mathcal{T})<1,$ i.e., the \textit{PESS} iterative process always converges to the unique solution of the three-by-three block SPP \eqref{eq11}.
			
		\end{corollary}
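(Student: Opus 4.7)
The plan is to deduce the corollary directly from Theorem \ref{th21} combined with Lemma \ref{lm22}, treating it as a one-line sign check on the inequality $(2s-1)|\mu|^2 + 2\Re(\mu) > 0$.

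First I would recall, via Theorem \ref{th21}, that the convergence of the \textit{PESS} iterative process is equivalent to the inequality $(2s-1)|\mu|^2 + 2\Re(\mu) > 0$ holding for every eigenvalue $\mu$ of $\Sigma^{-\frac{1}{2}}\mathcal{A}\Sigma^{-\frac{1}{2}}$. The strategy is therefore to verify this inequality under the hypothesis $s \geq \tfrac{1}{2}$, using only the positive-stability of $\Sigma^{-\frac{1}{2}}\mathcal{A}\Sigma^{-\frac{1}{2}}$ that has already been established in Lemma \ref{lm22}.

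Next I would split the sign check into its two natural contributions. The coefficient $(2s-1)$ is nonnegative precisely when $s \geq \tfrac{1}{2}$; together with $|\mu|^2 \geq 0$ this makes the first summand $(2s-1)|\mu|^2$ nonnegative. For the second summand, Lemma \ref{lm22} gives $\Re(\mu) > 0$ for every $\mu \in \sigma(\Sigma^{-\frac{1}{2}}\mathcal{A}\Sigma^{-\frac{1}{2}})$, which in particular forces $\mu \neq 0$ and $2\Re(\mu) > 0$. Adding a nonnegative quantity to a strictly positive one yields the strict inequality required by Theorem \ref{th21}.

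Finally, I would invoke Theorem \ref{th21} once more, in the sufficient direction, to conclude that $\vartheta(\mathcal{T}) < 1$, so the \textit{PESS} iterative process converges to the unique solution of \eqref{eq11} for every initial guess. There is essentially no obstacle: the entire argument is a direct corollary, and the only point worth a second of attention is the non-vanishing of $\mu$, which is immediate from $\Re(\mu) > 0$. I would also briefly note, to emphasize generality, that the conclusion is independent of the specific \textit{SPD} choices of $\Lambda_1, \Lambda_2, \Lambda_3$, so in particular it recovers the unconditional convergence results for $\P_{SS}$ and $\P_{EGSS}$ at the canonical choice $s = \tfrac{1}{2}$ listed in Table \ref{tab1}.
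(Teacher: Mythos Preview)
Your proposal is correct and follows essentially the same approach as the paper: invoke Lemma \ref{lm22} to get $\Re(\mu)>0$ (hence $|\mu|>0$), observe that $s\geq \tfrac{1}{2}$ makes $(2s-1)|\mu|^2\geq 0$, and conclude via the necessary and sufficient condition of Theorem \ref{th21}. The paper's proof is just a terser version of exactly this argument.
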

		\proof 
		By Lemma \ref{lm22}, $\Sigma^{-\frac{1}{2}}\A\Sigma^{-\frac{1}{2}}$ is positive stable, implies that $\Re(\mu)>0$ and $|\mu|>0.$ Thus, the inequality (\ref{eq211}) holds if $s\geq \frac{1}{2}$. This completes the proof. $\blacksquare$
		
		\begin{remark}
			By applying Corollary \ref{coro1}, the unconditional convergence of the existing preconditioners, namely $\P_{SS}$, $\P_{GSS}$ and $\P_{EGSS}$ can be obtained from  $\P_{PESS}$ by sustituting  $s=1/2$.
		\end{remark}
		Next, we present a stronger sufficient condition to Corollary \ref{coro1} for the convergence of the \textit{PESS} iterative process.
		\begin{corollary}\label{coro2}
			Let  $A\in\R^{n\times n}$ be a {\it SPD} matrix and let $B\in \R^{m\times n}$ and $C\in \R^{p\times m}$ be full row rank matrices, if $$s>\max\left\{\frac{1}{2}\left(1-\frac{\lambda_{\min}(\Sigma^{-\frac{1}{2}}(\A+\A^T)\Sigma^{-\frac{1}{2}})}{\vartheta(\Sigma^{-\frac{1}{2}}\A\Sigma^{-\frac{1}{2}})^2}\right),0\right\},$$ then the \textit{PESS} iterative process is convergent for any initial guess.
		\end{corollary}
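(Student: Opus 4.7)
The plan is to reduce the sufficient criterion to a per-eigenvalue lower bound on $s$, and then dominate that bound by global spectral invariants of $M := \Sigma^{-\frac{1}{2}}\A\Sigma^{-\frac{1}{2}}$. I would first invoke Theorem~\ref{th21}: convergence is equivalent to $(2s-1)|\mu|^2 + 2\Re(\mu) > 0$ for every $\mu \in \sigma(M)$. Because $\A$, and hence $M$, is nonsingular, each $\mu \neq 0$; dividing by $2|\mu|^2 > 0$, the criterion becomes $s > \frac{1}{2} - \Re(\mu)/|\mu|^2$ for all $\mu \in \sigma(M)$, so a single sufficient condition is
\[
s \;>\; \frac{1}{2} - \inf_{\mu \in \sigma(M)} \frac{\Re(\mu)}{|\mu|^2}.
\]

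Next, I would bound the numerator and denominator via a Rayleigh-quotient argument. For $\mu \in \sigma(M)$ with a normalized eigenvector $v \in \mathbb{C}^{n+m+p}$ (so $v^*v = 1$) we have $\mu = v^*Mv$, hence
\[
\Re(\mu) \;=\; v^* \frac{M+M^*}{2} v \;\geq\; \frac{1}{2}\lambda_{\min}\!\left(\Sigma^{-\frac{1}{2}}(\A+\A^T)\Sigma^{-\frac{1}{2}}\right),
\]
since $\tfrac{1}{2}(M+M^*) = \tfrac{1}{2}\Sigma^{-\frac{1}{2}}(\A+\A^T)\Sigma^{-\frac{1}{2}}$ is Hermitian. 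A direct block computation yields $\A + \A^T = \diag(2A,0,0)$, which is positive semidefinite, so the minimum eigenvalue above is nonnegative. Combined with the elementary fact $|\mu|^2 \leq \vartheta(M)^2$, this produces
\[
\frac{\Re(\mu)}{|\mu|^2} \;\geq\; \frac{\lambda_{\min}\!\left(\Sigma^{-\frac{1}{2}}(\A+\A^T)\Sigma^{-\frac{1}{2}}\right)}{2\,\vartheta(M)^2}.
\]
Substituting into the criterion yields $s > \frac{1}{2}\bigl(1 - \lambda_{\min}/\vartheta(M)^2\bigr)$. Together with the standing requirement $s > 0$ (needed for $\P_{PESS}$ to be nonsingular), taking the maximum of these two lower bounds gives the stated condition.

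The main obstacle---and the only place where care is needed---is the coupling step in the last display: chaining the lower bound $\Re(\mu) \geq \tfrac{1}{2}\lambda_{\min}$ with the upper bound $|\mu|^2 \leq \vartheta(M)^2$ to obtain the quotient estimate only produces a useful inequality when $\lambda_{\min} \geq 0$. That is precisely the reason for the outer $\max\{\,\cdot\,,0\}$ in the statement: when the first expression inside the max is negative, the spectral-radius estimate is too crude to improve on the trivial requirement $s > 0$, and one simply falls back on that. Beyond this coupling, the argument is a routine substitution into Theorem~\ref{th21}.
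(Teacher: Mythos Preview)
Your proposal is correct and follows essentially the same route as the paper: reduce via Theorem~\ref{th21} to the per-eigenvalue inequality $s>\tfrac{1}{2}-\Re(\mu)/|\mu|^2$, bound $\Re(\mu)$ below by a Rayleigh-quotient argument, bound $|\mu|^2$ above by $\vartheta(M)^2$, and combine. Your extra observation that $\A+\A^T=\diag(2A,0,0)$ is positive semidefinite (hence $\lambda_{\min}\geq 0$) is correct and not stated in the paper; note, though, that the quotient inequality $\Re(\mu)/|\mu|^2\geq \tfrac{1}{2}\lambda_{\min}/\vartheta(M)^2$ already follows for any sign of $\lambda_{\min}$ once you use $\Re(\mu)>0$ (positive stability, Lemma~\ref{lm22}) to first replace $|\mu|^2$ by $\vartheta(M)^2$, so the coupling concern you flag is in fact harmless here.
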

		\proof 
		Note that, as  $\Re(\mu)>0,$  the condition in (\ref{eq211}) holds if and only if $\frac{1}{2}-\frac{\Re(\mu)}{|\mu|^2}<s.$  Let ${\bf p}$ be an eigenvector corresponding to $\mu,$ then  we  have 
		\[\Re(\mu)=\frac{{\bf p}^*(\Sigma^{-\frac{1}{2}}(\A+\A^T)\Sigma^{-\frac{1}{2}}){\bf p}}{2 {\bf p}^*{\bf p}}\geq \frac{1}{2}\lambda_{\min}(\Sigma^{-\frac{1}{2}}(\A+\A^T)\Sigma^{-\frac{1}{2}}).
		\]  
   {Since $|\mu|\leq\vartheta(\Sigma^{-\frac{1}{2}}\A\Sigma^{-\frac{1}{2}}),$  we obtain the following:}
		\begin{equation}\label{EQ38}
			\frac{1}{2}-\frac{\lambda_{\min}(\Sigma^{-\frac{1}{2}}(\A+\A^T)\Sigma^{-\frac{1}{2}})}{2\vartheta(\Sigma^{-\frac{1}{2}}\A\Sigma^{-\frac{1}{2}})^2}\geq\frac{1}{2}-\frac{\Re(\mu)}{|\mu|^2}.
		\end{equation}
		If we apply  $s>0$, then we get the desired result from
		\eqref{EQ38}. $\blacksquare$
		
		The sufficient condition on Corollary \ref{coro2} is difficult to find for large $\A$ due to the involvement of computation of $\lambda_{\min}(\Sigma^{-\frac{1}{2}}(\A+\A^T)\Sigma^{-\frac{1}{2}})$ and $\vartheta(\Sigma^{-\frac{1}{2}}\A\Sigma^{-\frac{1}{2}})^2.$  Thus, we consider $s\geq \frac{1}{2}$ for practical implementation.
		
  \section{Spectral distribution of the {\textit{PESS}} preconditioned matrix}\label{sec4}
		Solving the three-by-three block \textit{SPP} \eqref{eq11} is equivalent to solving the preconditioned system of linear equations 
		\begin{align}
			\P_{PESS}^{-1}\A {\bf u}= \P_{PESS}^{-1}\d,
		\end{align}
		where $ \P_{PESS}$ serves as a preconditioner for the preconditioned {\it GMRES} ({\it PGMRES}) process. Since preconditioned matrices with clustered spectrum frequently culminate in rapid convergence for {\it GMRES}, see, for example  \cite{MBenzi2005, YSAAD}, the spectral distribution of the preconditioned matrix $\P_{PESS}^{-1}\A$ requires careful attention. As an immediate consequence of Corollary \ref{coro1}, the following clustering property for the eigenvalues of $\P_{PESS}^{-1}\A$ can be established.
		\begin{theorem}\label{th41}
			Let $A\in \R^{n\times n}$ be a {\it SPD} matrix and let  $B\in \R^{m\times n}$ and $C\in \R^{p\times m}$ are full row rank matrices. Suppose that $\lambda$ is an eigenvalue of the \textit{PESS} preconditioned matrix $\P_{PESS}^{-1}\A$. Then, for $s\geq \frac{1}{2},$ we have
			\begin{equation*}
				|\lambda-1|<1,
			\end{equation*}
			i.e.,  the spectrum of $\P_{PESS}^{-1}\A$ entirely contained in a disk  with centered at $(1,0)$ and radius strictly smaller than $1.$
		\end{theorem}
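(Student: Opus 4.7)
The plan is to reduce Theorem \ref{th41} directly to Corollary \ref{coro1} via the identity $\mathcal{P}_{PESS}^{-1}\mathcal{A} = I - \mathcal{T}$. First I would unpack the splitting \eqref{eq21}: since $\mathcal{A} = \mathcal{P}_{PESS} - \mathcal{Q}_{PESS}$, left-multiplying by $\mathcal{P}_{PESS}^{-1}$ gives
\begin{equation*}
    \mathcal{P}_{PESS}^{-1}\mathcal{A} = I - \mathcal{P}_{PESS}^{-1}\mathcal{Q}_{PESS} = I - \mathcal{T}.
\end{equation*}
This is purely algebraic and requires only the nonsingularity of $\mathcal{P}_{PESS}$ for $s>0$, which has already been established in Section \ref{sec2}.

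Next, I would transfer spectral information across this identity. If $\lambda \in \sigma(\mathcal{P}_{PESS}^{-1}\mathcal{A})$ with eigenvector $\mathbf{v}$, then $\mathcal{T}\mathbf{v} = (1-\lambda)\mathbf{v}$, so $1-\lambda \in \sigma(\mathcal{T})$. Invoking Corollary \ref{coro1}, for $s \geq \tfrac{1}{2}$ every eigenvalue $\theta$ of $\mathcal{T}$ satisfies $|\theta| < 1$, in particular $|1-\lambda| < 1$, which is the claimed bound $|\lambda - 1| < 1$.

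There is no real obstacle here: the theorem is essentially a reformulation of Corollary \ref{coro1} in terms of the preconditioned matrix rather than the iteration matrix. The only small care point is verifying that the strict inequality is preserved, which it is because Corollary \ref{coro1} delivers a strict bound (thanks to Lemma \ref{lm22} giving $\Re(\mu)>0$ and $|\mu|>0$ for all $\mu \in \sigma(\Sigma^{-1/2}\mathcal{A}\Sigma^{-1/2})$, making the inequality $(2s-1)|\mu|^2 + 2\Re(\mu) > 0$ strict whenever $s \geq 1/2$). The geometric interpretation — that $\sigma(\mathcal{P}_{PESS}^{-1}\mathcal{A})$ lies in the open unit disk centered at $(1,0)$ — then follows immediately from $|\lambda-1|<1$.
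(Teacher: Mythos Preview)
Your proposal is correct and follows essentially the same approach as the paper: both use the splitting identity $\mathcal{P}_{PESS}^{-1}\mathcal{A} = I - \mathcal{T}$ to relate $\lambda$ to an eigenvalue $\theta = 1-\lambda$ of $\mathcal{T}$, and then invoke Corollary \ref{coro1} to obtain $|\theta|<1$ for $s\geq \tfrac12$. Your additional remark on why the strict inequality is preserved is a nice clarification but not a departure from the paper's argument.
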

		\proof 
		Assume that $\lambda$ and $\theta$ are the eigenvalues of the \textit{PESS} preconditioned matrix  $\P_{PESS}^{-1}\A$ and the iteration matrix $\mathcal{T}$, respectively. On the other hand, from the matrix splitting \eqref{eq21}, we have \begin{align*}
			\P_{PESS}^{-1}\A= \P_{PESS}^{-1}(\P_{PESS}- \mathcal{Q}_{PESS})= I- \mathcal{T}.
		\end{align*}
		Then, we get $\lambda= 1- \theta.$ Now,  {from Corollary \ref{coro1}, it holds that $|\theta|< 1$  for $s\geq \frac{1}{2}.$}  Hence, it follows that,  { $|\lambda- 1|<1$  for $s\geq \frac{1}{2},$}
		which completes the proof. $\blacksquare$
		\vspace{0.1 cm}
		
		Let  $(\lambda,{\bf p}=[u^T, v^T, w^T]^T)$ be an eigenpair of the preconditioned matrix $\P_{PESS}^{-1}\A.$ Then, we have $\A{\bf p}=\lambda \P_{PESS}{\bf p},$ which can be written as
		\begin{subnumcases}{}\label{eq42a}
			Au+ B^Tv=\lambda (\Lambda_1+ sA)u+ s\lambda B^Tv, \\ \label{eq42b}
			-Bu- C^Tw= -s\lambda Bu+ \lambda \Lambda_2v- s\lambda C^Tw, \\ \label{eq42c}
			Cv=s\lambda Cv+ \lambda \Lambda_3w.
		\end{subnumcases}
		\begin{remark}\label{rm41}
			Notice that from \eqref{eq42a}-\eqref{eq42c}, we get $\lambda\neq 1/s,$ otherwise , ${\bf p}=[u^T, v^T, w^T]^T={0}\in \R^{n+m+p},$ which is  impossible as ${\bf p}$ is an eigenvector.
		\end{remark}  
		\begin{proposition}\label{prop1}
			Let $A\in \R^{n\times n}$ be a {\it SPD} matrix and let  $B\in \R^{m\times n}$ and $C\in \R^{p\times m}$ be full row rank matrices and $s>0.$ Let $(\lambda,{\bf p}=[u^T, v^T, w^T]^T)$ be an eigenpair of the preconditioned matrix $\P_{PESS}^{-1}\A.$ Then, the following holds:
			\begin{enumerate}
				\item[$(1)$] $u\neq 0.$ \label{pr1}
				\item[$(2)$] When $\lambda$ is real, $\lambda>0.$ \label{pr2}
				\item[$(3)$] $\Re(\mu)>0,$ where $\mu=\lambda/(1-s\lambda).$ \label{pr3}
			\end{enumerate}
		\end{proposition}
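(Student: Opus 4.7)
The plan is to first simplify the eigen-relations \eqref{eq42a}--\eqref{eq42c} by dividing through by $(1-s\lambda)$. By Remark \ref{rm41} we know $\lambda \neq 1/s$, and since $\A$ is nonsingular we also have $\lambda \neq 0$ (otherwise $\A{\bf p}=0$ forces ${\bf p}=0$). Hence $\mu := \lambda/(1-s\lambda)$ is well defined and nonzero, and the three equations rearrange to the cleaner system
\begin{align*}
Au + B^T v &= \mu \Lambda_1 u, \\
Bu + C^T w &= -\mu \Lambda_2 v, \\
Cv &= \mu \Lambda_3 w.
\end{align*}

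For (1), I would argue by contradiction: if $u=0$, the first relation becomes $B^T v = 0$; since $B$ has full row rank, $B^T$ is injective and thus $v=0$. Substituting into the second relation gives $C^T w = 0$, and the full row rank of $C$ then forces $w=0$. This makes ${\bf p}=0$, contradicting that ${\bf p}$ is an eigenvector.

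For (3), the strategy is to take inner products of the three reduced equations with $u$, $v$, and $w$ respectively. Writing $\alpha = u^* A u$, $\delta_1 = u^*\Lambda_1 u$, $\delta_2 = v^*\Lambda_2 v$, $\delta_3 = w^*\Lambda_3 w$, all real and with $\alpha, \delta_1 > 0$ by (1) and $\delta_2,\delta_3\geq 0$, I would then conjugate the equation coming from $v$, use it to eliminate the cross-terms $u^*B^Tv$ and $w^*Cv$ produced by the other two equations, and arrive at the key identity
\[
\mu(\delta_1 + \delta_3) + \bar{\mu}\,\delta_2 = \alpha.
\]
Taking real parts yields $\Re(\mu)(\delta_1+\delta_2+\delta_3) = \alpha > 0$, and since $\delta_1+\delta_2+\delta_3 > 0$, this gives $\Re(\mu) > 0$.

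Finally for (2), if $\lambda \in \R$ then $\mu = \lambda/(1-s\lambda)$ is real, and (3) forces $\mu>0$. Because $s>0$, the numerator and denominator of $\mu$ must share a sign; a short case check (ruling out $\lambda < 0$, which would make $1-s\lambda>0$ and hence $\mu<0$) leaves $\lambda > 0$. The main obstacle is spotting the reduction via $\mu = \lambda/(1-s\lambda)$ and the particular combination of the three scalar inner-product identities (with one conjugated) that cancels the indefinite cross-terms and yields the single scalar identity above; once that identity is in hand, (2) and (3) follow by elementary real/imaginary-part bookkeeping.
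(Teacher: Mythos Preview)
Your argument is correct, but it differs from the paper's in two respects.

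For part (3), the paper observes that the reduced system $\A{\bf p}=\mu\Sigma{\bf p}$ says exactly that $\mu$ is an eigenvalue of $\Sigma^{-1}\A$, and then invokes Lemma~\ref{lm22} (positive stability of $\Sigma^{-1}\A$) to conclude $\Re(\mu)>0$. Your route is more self-contained: the scalar identity $\mu(\delta_1+\delta_3)+\bar\mu\,\delta_2=\alpha$ is essentially an inline proof of that positive-stability statement for the particular eigenpair at hand, avoiding the appeal to the external Lemma~\ref{lm21}. This buys independence from prior results at the cost of a short computation.

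For part (2), the paper does not go through (3). Instead, noting that for real $\lambda$ one may take ${\bf p}$ real, it symmetrizes $\A{\bf p}=\lambda\P_{PESS}{\bf p}$ to obtain
\[
\lambda=\frac{u^TAu}{s\,u^TAu+u^T\Lambda_1u+v^T\Lambda_2v+w^T\Lambda_3w}>0,
\]
using $u\neq 0$ from (1). Your deduction of (2) from (3) via the sign analysis of $\mu=\lambda/(1-s\lambda)$ is equally valid and arguably more economical once (3) is in hand. Part (1) is handled essentially the same way in both proofs; you use the second reduced equation and full row rank of $C$ to kill $w$, whereas the paper uses the third equation and invertibility of $\Lambda_3$, but either works.
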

		\proof 
	$(1)$	
  Let $u=0$. Then by  \eqref{eq42a},  we get  $(1-s\lambda)B^Tv=0,$ which shows that $v=0,$ as $B$ has full row rank and $\lambda\neq 1/s.$ Furthermore, when combined with  \eqref{eq42c} leads to $w=0,$ as $\Lambda_3$ is {\it SPD}. Combining the above facts, it follows that ${\bf p}=[u^T, v^T, w^T]^T=0,$ which is impossible as ${\bf p}$ is an eigenvector. Hence, $u\neq 0.$
			
\noindent  $ (2)$ Let ${\bf p}=[u^T, v^T, w^T]^T$ be an eigenvector corresponding to the real eigenvalue $\lambda.$ Then, we have $\P_{PESS}^{-1}\A{\bf p}=\lambda {\bf p},$ or $\A{\bf p}=\lambda \P_{PESS}\,{\bf p}.$ Consequently, $\lambda$ can be written as 
			\begin{eqnarray*}
				\lambda &=& \frac{{\bf p}^T  \A{\bf p}+{\bf p}^T  \A^T\bf p}{2({\bf p}^T  \P_{PESS}{\bf p}+{\bf p}^T  \P_{PESS}^T\bf p)}\\
				&=& \frac{u^TAu}{2(su^TAu+u^T\Lambda_1u+v^T\Lambda_2v+w^T\Lambda_3w)}
				>0.
			\end{eqnarray*}
			The last inequality follows from the assumptions that $A,\Lambda_1,\Lambda_2$ and $\Lambda_3$ are {\it SPD} matrices. 
			
$(3)$   The system of linear equations  \eqref{eq42a}-\eqref{eq42c} can be reformulated as
			\begin{equation}\label{eq43}
				\left\{ \begin{array}{lcl}
					Au+B^Tv= (\lambda/(1-s\lambda)) \Lambda_1u, &  \\
					-Bu-C^Tw=(\lambda/(1-s\lambda)) \Lambda_2v,& \\
					Cv= (\lambda/(1-s\lambda)) \Lambda_3w.
				\end{array}\right.
			\end{equation}
    {The system of linear equations in \eqref{eq43} can also be expressed as}
			 $\A{\bf p}=\mu\Sigma{\bf p},$ or    $\Sigma^{-1}\A{\bf p}=\mu{\bf p},$ where $\mu=\lambda/(1-s\lambda).$ 
			By Lemma \eqref{lm22}, we have $\Sigma^{-1} \A$ is positive stable, which implies $\Re(\mu)>0,$ hence the proof is completed. $\blacksquare$
\begin{remark}\label{re42}
			From Proposition \ref{prop1}, it follows that when $\lambda$ is real, then   $\mu>0.$ Furthermore, the values of $\lambda$ lies in $(0, 1/s)$  for all $s>0$. 
		\end{remark}
  In the following theorem, we provide sharper bounds for real eigenvalues of the \textit{PESS} preconditioned matrix. Before that, we introduce the following notation:
  \begin{align}\label{eq:noation1}
    &  \xi_{\max}:=\lambda_{\max}(\Lambda_1^{-1}A), ~ \xi_{\min}:=\lambda_{\min}(\Lambda_1^{-1}A),~ \eta_{\max}:=\lambda_{\max}(\Lambda_2^{-1}B\Lambda_1^{-1}B^T),\\ \label{eq:noation2}
      &~\eta_{\min}:=\lambda_{\min}(\Lambda_2^{-1}B\Lambda_1^{-1}B^T) ~\text{and}~\theta_{\max}:=\lambda_{\max}(\Lambda_2^{-1}C^T\Lambda_3^{-1}C).
  \end{align}
		\begin{theorem}\label{th42}
			Let $A\in \R^{n\times n}$ be a {\it SPD} matrix and let  $B\in \R^{m\times n}$ and $C\in \R^{p\times m}$ be full row rank matrices and $s> 0.$ Suppose $\lambda$ is a real eigenvalue of the preconditioned matrix $\P_{PESS}^{-1}\A.$ Then
		\begin{equation}\label{eq46}
				\lambda \in \left(0, \frac{\xi_{\max}}{1+s\xi_{\max}}\right].
			\end{equation}
		\end{theorem}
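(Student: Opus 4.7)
The plan is to reuse the reformulation $\A{\bf p}=\mu\Sigma{\bf p}$ with $\mu=\lambda/(1-s\lambda)$ that already appeared in the proof of Proposition~\ref{prop1}. Because $\lambda$ is real, Proposition~\ref{prop1}(2) gives $\lambda>0$ and Remark~\ref{re42} places $\lambda\in(0,1/s)$, so $\mu$ is a positive real. Since $\P_{PESS}^{-1}\A$ is a real matrix with a real eigenvalue, I may take ${\bf p}=[u^T,v^T,w^T]^T$ to be real, and Proposition~\ref{prop1}(1) guarantees $u\neq 0$.

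First, I pre-multiply the three block equations in \eqref{eq43} by $u^T$, $v^T$, and $w^T$, respectively, obtaining the scalar identities $u^TAu+u^TB^Tv=\mu u^T\Lambda_1 u$, $-v^TBu-v^TC^Tw=\mu v^T\Lambda_2 v$, and $w^TCv=\mu w^T\Lambda_3 w$. Since $u^TB^Tv=v^TBu$ and $v^TC^Tw=w^TCv$ as scalars, adding the three identities causes all cross terms to cancel, yielding the clean relation
\begin{equation*}
u^TAu=\mu\bigl(u^T\Lambda_1 u+v^T\Lambda_2 v+w^T\Lambda_3 w\bigr).
\end{equation*}

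Next, because $\Lambda_2$ and $\Lambda_3$ are \textit{SPD}, the last two terms on the right are nonnegative and $u^T\Lambda_1 u>0$, so
\[
\mu \le \frac{u^TAu}{u^T\Lambda_1 u}.
\]
The change of variable $y=\Lambda_1^{1/2}u$ turns this quotient into the Rayleigh quotient $y^T(\Lambda_1^{-1/2}A\Lambda_1^{-1/2})y/y^Ty$, whose eigenvalues coincide with those of $\Lambda_1^{-1}A$; hence $\mu\le\xi_{\max}$.

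Finally, inverting $\mu=\lambda/(1-s\lambda)$ gives $\lambda=\mu/(1+s\mu)$, and the map $t\mapsto t/(1+st)$ is strictly increasing on $[0,\infty)$ for $s>0$. Therefore $\mu\le\xi_{\max}$ translates to $\lambda\le\xi_{\max}/(1+s\xi_{\max})$, and combined with $\lambda>0$ this yields \eqref{eq46}. The main obstacle is purely expository: recognizing the cross-term cancellation that collapses the three block identities into a single Rayleigh-quotient bound; once that identity is in hand, the remaining monotonicity argument is immediate.
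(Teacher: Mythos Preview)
Your proof is correct and considerably shorter than the paper's. The paper treats the three cases $v=0$, $v\neq 0$ with $Cv=0$, and $Cv\neq 0$ separately: the first case gives $\lambda=p/(1+sp)$ directly; the second case leads to a quadratic in $\lambda$ whose real roots are bounded via monotonicity of two auxiliary functions $\Phi_1,\Phi_2$; and the third case is handled by a contradiction argument showing that $\lambda>\xi_{\max}/(1+s\xi_{\max})$ would force a sign inconsistency in \eqref{eq426}. Your single identity $u^TAu=\mu\,{\bf p}^T\Sigma{\bf p}$, obtained by summing the three scalar equations so that the skew--symmetric cross terms cancel, replaces all of this: it immediately gives $\mu\le u^TAu/u^T\Lambda_1u\le\xi_{\max}$, and the monotone map $\mu\mapsto\mu/(1+s\mu)$ finishes the argument. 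In fact this is precisely the symmetrization trick the paper itself uses in Proposition~\ref{prop1}(2) (applied there to $\A{\bf p}=\lambda\P_{PESS}{\bf p}$ rather than to $\A{\bf p}=\mu\Sigma{\bf p}$), so the ingredients were already on the table; the paper just did not push that identity to its full conclusion. What the paper's case analysis buys is some additional information not recorded in the theorem statement---for instance, the two--sided bound \eqref{eq411} in the case $v=0$---but for the stated result your route is both cleaner and more transparent.
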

		\begin{proof}
			Let ${\bf p}=[u^T, v^T, w^T]^T$ be an eigenvector corresponding to the real eigenvalue $\lambda.$  Then, the system of linear equations in \eqref{eq42a}-\eqref{eq42c} are satisfied. First, we assume that $v=0.$ Subsequently,  \eqref{eq42a} and \eqref{eq42c} are simplified to
			\begin{align}\label{eq47}
				Au=\lambda(\Lambda_1+sA)u~~  {\text{and}}~~ \lambda \Lambda_3w=0,
			\end{align}
			respectively. The second equation  {in} \eqref{eq47} gives  $w=0,$ as $\Lambda_3$ is {\it SPD}.  {Premultiplying by $u^T$ to the first equation of \eqref{eq47}} , we get 
			\begin{equation*}
				\lambda =\frac{u^TAu}{u^T\Lambda_1 u+su^TAu}=\frac{p}{1+sp},
			\end{equation*}
			where $p= \dm{\frac{u^TAu}{u^T\Lambda_1 u}}>0.$
			Now, for any nonzero vector $u\in \R^n,$ we  have 
		\begin{equation}\label{eq410}
				\lambda_{\min}(\Lambda_1^{-\frac{1}{2}}A\Lambda_1^{-\frac{1}{2}})\leq \frac{u^TAu}{u^T\Lambda_1 u}=\frac{(\Lambda_1^{\frac{1}{2}}u)^T\Lambda_1^{-\frac{1}{2}}A\Lambda_1^{-\frac{1}{2}}\Lambda_1^{\frac{1}{2}}u}{(\Lambda_1^{\frac{1}{2}}u)^T\Lambda_1^{\frac{1}{2}} u}\leq \lambda_{\max}(\Lambda_1^{-\frac{1}{2}}A\Lambda_1^{-\frac{1}{2}}).
			\end{equation}
			Since $\Lambda_1^{-\frac{1}{2}}A\Lambda_1^{-\frac{1}{2}}$ is similar to $\Lambda_1^{-1}A$ and the function $f:\R\rightarrow \R$ defined by $f(p):= p/(1+s p),$ where $s>0,$ is monotonically increasing in $ p>0$, we obtain the following bound for $\lambda:$
			\begin{equation}\label{eq411}
				\frac{\xi_{\min}}{1+s\xi_{\min}}\leq   \lambda\leq \frac{\xi_{\max}}{1+s\xi_{\max}}.
			\end{equation}
			Next, consider $v\neq 0$ but $Cv=0.$ Then $\eqref{eq42c}$ implies $w=0$ and from $\eqref{eq42b},$ we get
			\begin{align}\label{eq412}
				v=-\frac{1}{\mu}\Lambda_2^{-1}Bu.
			\end{align}
			Substituting \eqref{eq412} in \eqref{eq42a} yields
		\begin{equation}\label{eq413}
	Au=\lambda(\Lambda_1+sA)u+\frac{(1-s\lambda)}{\mu}B^T\Lambda_2^{-1}Bu.
			\end{equation}
			Premultiplying both sides of \eqref{eq413} by $u^T,$ we obtain
			\begin{equation}\label{eq414}
				\lambda u^TAu=\lambda^2u^T\Lambda_1u+s\lambda^2 u^TAu+(1-s\lambda)^2u^TB^T\Lambda_2^{-1}Bu,
			\end{equation}
			which is  {further} equivalent to
		\begin{equation}\label{eq415}
				\lambda^2-\lambda \frac{2sq+p}{s^2q+ sp+ 1}+ \frac{q}{s^2q+ sp+ 1} =0,
			\end{equation}
			where $q= \dm{\frac{u^TB^T\Lambda_2^{-1}Bu}{u^T\Lambda_1u}}.$
			By solving \eqref{eq415}, we obtain the following real solutions for  $\lambda$: 
			\begin{equation}\label{eq416}
				\lambda^{\pm}= \frac{2sq+ p\pm \sqrt{p^2- 4q}}{2(s^2q+ sp+ 1)},
			\end{equation}
			where $p^2-4q\geq 0.$ Consider  the functions $\Phi_1,\Phi_2:\R\times \R\times \R\longrightarrow \R$ defined by 
			\begin{eqnarray*}
				\Phi_1(p, q, s) &=& \frac{2sq+ p+  \sqrt{p^2- 4q}}{2(s^2q+sp+1)},\\
				\Phi_2(p, q, s) &=& \frac{2sq+ p- \sqrt{p^2- 4q}}{2(s^2q+ sp+ 1)},
			\end{eqnarray*}
			respectively, where $p,s> 0,$ $q\geq 0$ and $p^2 > 4q$. Then $\Phi_1$ and $\Phi_2$  {are} strictly monotonically increasing in the argument $p>0$ and $q\geq 0,$   
			and decreasing in the argument $q\geq0$ and $p>0,$  respectively.  Now, using \eqref{eq410}, \eqref{eq416} and monotonicity of the functions $\Phi_1$ and $\Phi_2,$ we get the following bounds: 
			\begin{align}\label{eq419}
				&\lambda^+=\Phi_1(p,q,s)<\Phi_1(\lambda_{\max}(\Lambda^{-1}_1 A),0,s),\\\label{eq420}
				&\Phi_2(\lambda_{\max}(\Lambda^{-1}_1 A),0,s)< \lambda^{-}=\Phi_2(p,q,s).
			\end{align}
			Combining \eqref{eq419} and \eqref{eq420}, we get the following bounds for $\lambda$:
			\begin{align}\label{eq421}
				0< \lambda^{\pm} < \frac{\xi_{\max}}{1+s\xi_{\max}}.
			\end{align}
			Next, consider the case $Cv\neq 0.$ Then from \eqref{eq42c}, we get $w=\frac{1 }{\mu}\Lambda_3^{-1}Cv.$ Substituting the value of $w$ in \eqref{eq42b}, we obtain
			\begin{equation}\label{eq422}
				(1-s\lambda)Bu+\frac{(1-s\lambda)}{\mu}C^T\Lambda_3^{-1} C v=-\lambda \Lambda_2v.
			\end{equation}
			Now, we assume that 
			\begin{equation}\label{Eq423}
				\lambda > \frac{\xi_{\max}}{1+ s\xi_{\max}}.
			\end{equation}
			  {By} Remark \ref{re42}, we get $1-s\lambda >0$  and  the above  assertion yields  $ \mu > \xi_{\max}.$
			Hence, the matrix $\mu\Lambda_1-A$ is nonsingular and \eqref{eq42a} gives 
			\begin{align}\label{eq424}
				u=\left(\mu\Lambda_1-A\right)^{-1}B^Tv.
			\end{align}
			Substituting \eqref{eq424} into \eqref{eq422}, we obtain
			\begin{equation}\label{eq425}
				(1-s\lambda)B\left(\mu\Lambda_1-A\right)^{-1}B^Tv+\frac{(1-s\lambda )}{\mu}C^T\Lambda_3^{-1} C v=-\lambda \Lambda_2v.
			\end{equation}
			Note that, $v\neq 0.$   {Thus}, premultiplying by $v^T$ on the both sides of \eqref{eq425} leads to the following identity:
			\begin{equation}\label{eq426}
				(1- s\lambda)v^TB\left(\mu\Lambda_1-A\right)^{-1}B^Tv+ \lambda v^T \Lambda_2v= -\frac{(1-s\lambda )}{\mu}v^TC^T\Lambda_3^{-1} C v.
			\end{equation}
			On the other hand,  $\mu\Lambda_1- A>\Lambda_1(\mu-\ \lambda_{\max}(\Lambda^{-1}A))I> 0$ and hence  {is a} {\it SPD} matrix. Consequently, the matrix  $B\left(\mu\Lambda_1-A\right)^{-1}B^T$ is also a {\it SPD} matrix and this implies  $v^TB\left(\mu\Lambda_1- A\right)^{-1}B^Tv> 0$ for all $v\not=0.$ This leads to a contradiction to \eqref{eq426} as $v^TC^T\Lambda_3^{-1} C v\geq 0.$ Hence, the assumption  \eqref{Eq423} is not true and we get
			\begin{equation}\label{eq428}
				\lambda \leq \frac{\xi_{\max}}{1+s\xi_{\max}}.
			\end{equation}
			 {Again, we have $\lambda>0$ from $(1)$ of Proposition \ref{prop1}.}   Combining the above together with \eqref{eq411} and \eqref{eq421}, we obtain the desired bounds in \eqref{eq46} for $\lambda.$ Hence, the proof is concluded.
		\end{proof}
		
		Since the preconditioners $\P_{SS}$ and  $\P_{EGSS}$  are special cases of the \textit{PESS} preconditioner, next, we obtain refined bounds for real eigenvalues of the {\it SS} and {\it EGSS}  preconditioned matrices from Theorem \ref{th42}.
		\begin{corollary}
			Let  $\P_{SS}$ be defined as in \eqref{eq14} with $\alpha>0$ and let  $\lambda$ be an real eigenvalue of the { \it SS} preconditioned matrix $\P_{SS}^{-1}\A.$ Then
			\begin{equation}\label{Eq429}
				\lambda\in \left(0, \frac{2 \kappa_{\max}}{ \alpha+\kappa_{\max}}\right],
			\end{equation}
   where $\kappa_{\max}=\lambda_{\max}(A).$ 
		\end{corollary}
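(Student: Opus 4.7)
The plan is to deduce this corollary as a direct specialization of Theorem \ref{th42}, since the $\P_{SS}$ preconditioner is obtained from $\P_{PESS}$ by the parameter choices listed in Table \ref{tab1}. Concretely, setting $\Lambda_1 = \Lambda_2 = \Lambda_3 = \tfrac{1}{2}\alpha I$ and $s = \tfrac{1}{2}$ in the definition of $\P_{PESS}$ recovers
\[
\P_{PESS} = \bmatrix{\tfrac{\alpha}{2}I+\tfrac{1}{2}A & \tfrac{1}{2}B^T & 0 \\ -\tfrac{1}{2}B & \tfrac{\alpha}{2}I & -\tfrac{1}{2}C^T \\ 0 & \tfrac{1}{2}C & \tfrac{\alpha}{2}I}
= \tfrac{1}{2}\bmatrix{\alpha I+A & B^T & 0 \\ -B & \alpha I & -C^T \\ 0 & C & \alpha I} = \P_{SS},
\]
so every real eigenvalue $\lambda$ of $\P_{SS}^{-1}\A$ is also a real eigenvalue of $\P_{PESS}^{-1}\A$ for this specific choice.

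Next I would compute $\xi_{\max}$ as defined in \eqref{eq:noation1} for these parameter choices. Since $\Lambda_1 = \tfrac{\alpha}{2}I$, we have $\Lambda_1^{-1}A = \tfrac{2}{\alpha}A$, and hence
\[
\xi_{\max} = \lambda_{\max}\!\left(\tfrac{2}{\alpha}A\right) = \tfrac{2}{\alpha}\,\lambda_{\max}(A) = \tfrac{2\kappa_{\max}}{\alpha}.
\]
Plugging $s = \tfrac{1}{2}$ and this value of $\xi_{\max}$ into the bound \eqref{eq46} of Theorem \ref{th42} yields
\[
\lambda \in \left(0,\; \frac{\xi_{\max}}{1 + s\,\xi_{\max}}\right] = \left(0,\; \frac{2\kappa_{\max}/\alpha}{1 + \kappa_{\max}/\alpha}\right] = \left(0,\; \frac{2\kappa_{\max}}{\alpha + \kappa_{\max}}\right],
\]
which is exactly \eqref{Eq429}. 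No genuine obstacle is expected here, as the result is a one-step consequence of Theorem \ref{th42} once the identification of $\P_{SS}$ as a special $\P_{PESS}$ is made; the only thing to check carefully is the arithmetic in translating $\xi_{\max}$ back into $\kappa_{\max}$. The strict positivity on the left endpoint is inherited directly from part $(2)$ of Proposition \ref{prop1}.
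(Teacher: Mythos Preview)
Your proposal is correct and follows exactly the same approach as the paper: identify $\P_{SS}$ as the special case of $\P_{PESS}$ with $\Lambda_1=\tfrac{1}{2}\alpha I$ and $s=\tfrac{1}{2}$, then substitute into Theorem \ref{th42}. In fact you spell out the arithmetic translating $\xi_{\max}$ into $\kappa_{\max}$ more explicitly than the paper does.
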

		\proof 
		Since $\P_{SS}$ is a special case of the \textit{PESS} preconditioner $\P_{PESS}$ for $s=1/2$ as discussed in Table \ref{tab1}, the bounds in \eqref{Eq429} are obtained by substituting $\Lambda_1=\frac{1}{2}\alpha I$ and $s=1/2$ in Theorem \ref{th42}. $\blacksquare$
		
		In the next result, we discuss bounds for the real eigenvalues of the {\it EGSS} preconditioned matrix $\P_{EGSS}^{-1}\A$.
		\begin{corollary}
			Let  $\P_{EGSS}$ be defined as in \eqref{eq15} with $\alpha>0$ and let  $\lambda$ be an real eigenvalue of the {\it EGSS} preconditioned matrix $\P_{EGSS}^{-1}\A.$ Then
			\begin{equation}\label{Eq430}
				\lambda\in \left(0, \frac{2  \tilde{\kappa}_{\max}}{ \alpha+ \tilde{\kappa}_{\max}}\right],
			\end{equation}
where  $ \tilde{\kappa}_{\max}=\lambda_{\max}(P^{-1}A).$
		\end{corollary}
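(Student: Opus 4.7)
The plan is to deduce this corollary as a direct specialization of Theorem \ref{th42}, in the same spirit as the preceding corollary for $\P_{SS}$. According to Table \ref{tab1}, the {\it EGSS} preconditioner corresponds to $\P_{PESS}$ with the specific choices $\Lambda_1 = \tfrac{1}{2}\alpha P$, $\Lambda_2 = \tfrac{1}{2}\beta Q$, $\Lambda_3 = \tfrac{1}{2}\gamma W$, and $s = 1/2$. Since $P, Q, W$ are {\it SPD} and $\alpha, \beta, \gamma > 0$, the hypotheses of Theorem \ref{th42} are satisfied.

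The only quantity that enters the bound \eqref{eq46} is $\xi_{\max} = \lambda_{\max}(\Lambda_1^{-1} A)$. Under the substitution $\Lambda_1 = \tfrac{1}{2}\alpha P$, the matrix $\Lambda_1^{-1} A$ becomes $\tfrac{2}{\alpha} P^{-1} A$, so $\xi_{\max} = \tfrac{2}{\alpha}\lambda_{\max}(P^{-1} A) = \tfrac{2\tilde{\kappa}_{\max}}{\alpha}$.

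Plugging this into the upper endpoint of the interval from Theorem \ref{th42} with $s = 1/2$ yields
\[
\frac{\xi_{\max}}{1+s\xi_{\max}}
= \frac{2\tilde{\kappa}_{\max}/\alpha}{1 + \tfrac{1}{2}\cdot \tfrac{2\tilde{\kappa}_{\max}}{\alpha}}
= \frac{2\tilde{\kappa}_{\max}/\alpha}{(\alpha+\tilde{\kappa}_{\max})/\alpha}
= \frac{2\tilde{\kappa}_{\max}}{\alpha+\tilde{\kappa}_{\max}},
\]
which is exactly the upper bound in \eqref{Eq430}. Combined with the strict positivity $\lambda > 0$ carried over from Theorem \ref{th42}, this gives the claimed interval. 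Since everything is routine algebra once the identification with $\P_{PESS}$ is made, there is no real obstacle; the only thing to be careful about is tracking the factor of $1/2$ correctly through both $\Lambda_1$ and $s$ so that the $\alpha$'s cancel cleanly in the numerator and denominator.
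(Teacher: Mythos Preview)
Your proposal is correct and follows exactly the same approach as the paper: both deduce the corollary by substituting $\Lambda_1 = \tfrac{1}{2}\alpha P$ and $s = 1/2$ into Theorem \ref{th42}. Your version simply spells out the algebraic simplification of $\xi_{\max}/(1+s\xi_{\max})$ explicitly, whereas the paper leaves that computation to the reader.
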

		\proof 
		Since $\P_{EGSS}$ is a special case of the \textit{PESS} preconditioner $\P_{PESS}$ for $s= 1/2$ as discussed in Table \ref{tab1} and the bounds   in \eqref{Eq430} are obtained by substituting $\Lambda_1=  \frac{1}{2}\alpha P$ and $s= 1/2$ in Theorem \ref{th42}. $\blacksquare$
		
		The following result shows the bounds when $\lambda$ is a non-real eigenvalue.
		
		\begin{theorem}\label{th43}
			Let $A\in \R^{n\times n}$ be a {\it SPD} matrix and let  $B\in \R^{m\times n}$ and $C\in \R^{p\times m}$ be full row rank matrices. Let $s> 0$ and   $\lambda$ be a non-real eigenvalue of the preconditioned matrix $\P_{PESS}^{-1}\A$ with $\Im(\lambda)\neq 0.$ Suppose  ${\bf p}=[u^T, v^T, w^T]^T$ is the  eigenvector corresponding to $\lambda.$  Then the following holds: 
			 \begin{enumerate}
				\item[$(1)$]   If $Cv=0,$   then  $\lambda$ satisfies
				\begin{eqnarray*}
					\frac{\xi_{\min}}{2+s\xi_{\min}} \leq \,|\lambda|\,\leq \sqrt{\frac{\eta_{\max}}{1+s\xi_{\min}+s^2\eta_{\max}}}.
				\end{eqnarray*}  \label{th431}  \vspace{-4mm}  
				\item[$(2)$]  If $Cv\neq 0,$  then real and imaginary part of $\lambda/(1-s\lambda)$ satisfies
    \begin{eqnarray*}
        \frac{\xi_{\min} \, \eta_{\min}}{2\left(\xi_{\max}^2+\eta_{\max}+\theta_{\max}\right)}\leq \Re\left(\frac{\lambda}{1-s\lambda}\right)\leq \frac{\xi_{\max}}{2}~\text{and}~	\left|\Im\left(\frac{\lambda}{1-s\lambda}\right)\right|\leq \sqrt{\eta_{\max}+\theta_{\max}},
	 \end{eqnarray*}			
    \end{enumerate}
    where $\xi_{\max},\xi_{\min},\eta_{\max},\eta_{\min}$ and $\theta_{\max}$ are defined as in \eqref{eq:noation1} and \eqref{eq:noation2}.
		\end{theorem}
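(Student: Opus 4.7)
The plan is to work with the reformulation $\A{\bf p} = \mu \Sigma {\bf p}$ from \eqref{eq43}, where $\mu = \lambda/(1-s\lambda)$, and to extract scalar identities by premultiplying each block equation by the conjugate transpose of the corresponding eigenvector block. Throughout I write $\alpha_1 = u^*\Lambda_1 u$, $\alpha_2 = v^*\Lambda_2 v$, $\alpha_3 = w^*\Lambda_3 w$, $\gamma = u^*Au$, $\beta_1 = v^*Bu$ and $\beta_2 = w^*Cv$; by Proposition \ref{prop1} $u\neq 0$, so $\alpha_1,\gamma>0$, and non-realness of $\lambda$ forces $\mu$ to be non-real, in particular $\mu\neq 0$.

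For part (1), $Cv = 0$ together with $\mu\neq 0$ force $w=0$ from the third block of \eqref{eq43}. The second block then gives $v = -\Lambda_2^{-1}Bu/\mu$, and substituting into the first block followed by left-multiplication by $u^*$ produces the real-coefficient quadratic $\mu^2 - a\mu + b = 0$, where $a = \gamma/\alpha_1 \in [\xi_{\min},\xi_{\max}]$ and $b = u^*B^T\Lambda_2^{-1}Bu/\alpha_1 \in (0,\eta_{\max}]$. Non-real roots force $a^2 < 4b$, whence $\Re(\mu) = a/2$ and $|\mu|^2 = b$. Converting via $\lambda = \mu/(1+s\mu)$ gives $|\lambda|^2 = b/(1+sa+s^2 b)$, which is monotonically increasing in $b$ and decreasing in $a$; the upper bound in (1) follows by taking $b=\eta_{\max}$, $a=\xi_{\min}$, and the lower bound comes from $|\mu| > a/2 \geq \xi_{\min}/2$ together with monotonicity of $x\mapsto x/(1+sx)$.

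For part (2) with $Cv\neq 0$, taking $u^*$, $v^*$, $w^*$ of the three block equations and eliminating $\beta_1, \beta_2$ produces the identity $\gamma = \mu\alpha_2 + \bar\mu(\alpha_1+\alpha_3)$. Matching imaginary parts and using $\Im(\mu)\neq 0$ yields the pivotal relation $\alpha_2 = \alpha_1+\alpha_3$, and the real part then gives $\Re(\mu) = \gamma/(2\alpha_2)$. The upper bound $\Re(\mu)\leq \xi_{\max}/2$ is immediate from $\gamma\leq \xi_{\max}\alpha_1\leq \xi_{\max}\alpha_2$. The second block of \eqref{eq43} supplies $|\mu|^2\alpha_2 = (Bu+C^Tw)^*\Lambda_2^{-1}(Bu+C^Tw)$; a triangle inequality in the $\Lambda_2^{-1}$-inner product followed by the elementary Cauchy--Schwarz bound
\[ \bigl(\sqrt{\eta_{\max}\alpha_1} + \sqrt{\theta_{\max}\alpha_3}\bigr)^2 \leq (\eta_{\max}+\theta_{\max})(\alpha_1+\alpha_3) = (\eta_{\max}+\theta_{\max})\alpha_2 \]
gives $|\mu|^2 \leq \eta_{\max}+\theta_{\max}$ and hence $|\Im(\mu)| \leq \sqrt{\eta_{\max}+\theta_{\max}}$.

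The main obstacle is the lower bound on $\Re(\mu)$, which I will obtain via an ``energy'' identity extracted from the first block. Rewriting the first block of \eqref{eq43} as $Au - \mu\Lambda_1 u = -B^T v$ and taking the squared $\Lambda_1^{-1}$-norm of both sides yields
\[ u^*A\Lambda_1^{-1}Au - 2\Re(\mu)\gamma + |\mu|^2\alpha_1 = v^*B\Lambda_1^{-1}B^T v. \]
The LHS is bounded above by $(\xi_{\max}^2 + \eta_{\max} + \theta_{\max})\alpha_1$, since $u^*A\Lambda_1^{-1}Au\leq \xi_{\max}^2\alpha_1$ (spectral bound for $\Lambda_1^{-1/2}A\Lambda_1^{-1/2}$), the middle term is non-positive, and $|\mu|^2 \leq \eta_{\max}+\theta_{\max}$ from the imaginary-part analysis above. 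The RHS is bounded below by $\eta_{\min}\alpha_2$ because $\Lambda_2^{-1/2}B\Lambda_1^{-1}B^T\Lambda_2^{-1/2}$ is SPD with smallest eigenvalue $\eta_{\min}$. Combining produces $\alpha_2/\alpha_1 \leq (\xi_{\max}^2 + \eta_{\max} + \theta_{\max})/\eta_{\min}$, and inserting this into $\Re(\mu) = \gamma/(2\alpha_2) \geq \xi_{\min}\alpha_1/(2\alpha_2)$ delivers exactly the claimed lower bound.
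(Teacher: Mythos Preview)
Your argument is correct, and for part~(2) it is genuinely different from---and considerably more elementary than---the paper's proof. For part~(1) your route is essentially the paper's (a quadratic in $\mu$ with real coefficients, then monotonicity), just cleaner in that you work with $\mu$ before converting to $\lambda$.

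For part~(2) the paper proceeds by inverting $\mu\Lambda_1-A$, substituting to eliminate $u$, diagonalising $\Lambda_1^{-1/2}A\Lambda_1^{-1/2}$, and then splitting the complex diagonal $(\mu I-\mathcal D)^{-1}$ into real and imaginary parts to obtain two scalar equations whose manipulation yields the bounds. Your approach avoids all of that machinery: the three scalar identities obtained by premultiplying each block by $u^*,v^*,w^*$ combine into $\gamma=\mu\alpha_2+\bar\mu(\alpha_1+\alpha_3)$, whose imaginary part forces the pivotal relation $\alpha_2=\alpha_1+\alpha_3$ and whose real part gives the exact formula $\Re(\mu)=\gamma/(2\alpha_2)$---from which the upper bound is immediate. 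Your imaginary-part bound via the triangle inequality in the $\Lambda_2^{-1}$-inner product plus Cauchy--Schwarz in fact gives the stronger estimate $|\mu|^2\le\eta_{\max}+\theta_{\max}$. Finally, the energy identity $\|Au-\mu\Lambda_1u\|_{\Lambda_1^{-1}}^2=\|B^Tv\|_{\Lambda_1^{-1}}^2$ yields $\alpha_2/\alpha_1\le(\xi_{\max}^2+\eta_{\max}+\theta_{\max})/\eta_{\min}$ directly, bypassing the paper's eigenvalue-by-eigenvalue estimate on the diagonal matrix $\mathcal X$. What the paper's approach buys is a more explicit decomposition into real/imaginary components that could be refined further; what your approach buys is a self-contained argument using only Rayleigh-quotient bounds and elementary inequalities, with no matrix inversion or spectral decomposition required.
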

		\proof  $(1)$\,\,  Let $\lambda$ be an eigenvalue of $\P_{PESS}^{-1}\A$ with $\Im(\lambda)\neq 0$ and the corresponding eigenvector is ${\bf p}=[u^T, v^T, w^T]^T.$ Then, the system of linear equations in \eqref{eq42a}-\eqref{eq42c} holds. We assert that, $v\neq 0,$ otherwise from \eqref{eq42a}, we get $Au= \lambda(\Lambda_1+ sA)u,$ which further implies $(\Lambda_1+ sA)^{-\frac{1}{2}}A(\Lambda_1+ sA)^{-\frac{1}{2}}\bar{u}= \lambda \bar{u},$ where $\bar{u}= (\Lambda_1+ sA)^{\frac{1}{2}}u.$   {Since  $(\Lambda_1+ sA)^{-\frac{1}{2}}A(\Lambda_1+ sA)^{-\frac{1}{2}}$ is a {\it SPD} matrix and by Lemma \ref{prop1}   $u\neq 0$  and thus  we have $\Bar{u}\neq 0.$ This implies $\lambda$ is real, leading to  a contradiction to $\Im(\lambda)\neq 0.$} 
		
		Initially, we consider the case  $Cv=0.$ The aforementioned discussion suggests that $Au\neq \lambda (\Lambda_1+sA)u$ for any $u\neq 0.$ Consequently,  $(\mu \Lambda_1 -A)$ is nonsingular.  {Again, since $Cv= 0,$ from \eqref{eq42c} and following a similar process as in \eqref{eq412}-\eqref{eq414}, we obtain  quadratic equation \eqref{eq415} in $\lambda$, which has complex solutions:}
		\begin{equation}\label{eq431}
			\lambda^{\pm i}= \frac{2sq+ p\pm i \sqrt{4q -p^2}}{2(s^2q+ sp+ 1)} 
		\end{equation}
		for $4q>p^2.$ From \eqref{eq431}, we get 
		\begin{equation}\label{eq432}
			|\lambda^{\pm i}|^2= \frac{q}{s^2q+ sp+ 1}. 
		\end{equation}
		Now, consider  the function $\Psi_1:\R\times \R\times \R\longrightarrow \R$ defined by 
		\begin{equation*}
			\Psi_1(p, q, s)=\frac{q}{s^2q+sp+1},
		\end{equation*}
		where $p, q, s>0.$  Then $\Psi_1$ is monotonically increasing in the argument $q>0$ while decreasing in the argument $p>0.$ Due to the fact that $q\leq \lambda_{\max}(\Lambda_1^{-1}B^T\Lambda_2^{-1}B)= \eta_{\max},$ we get the following bounds: 
\begin{eqnarray}\label{eq436}
	|\lambda^{\pm i}|^2 = \Psi_1(p, q, s) &\leq& \Psi_1(\xi_{\min}, \eta_{\max},s)=\frac{\eta_{\max}}{1+s\xi_{\min}+s^2\eta_{\max}}.
\end{eqnarray}
Again, considering the condition $q>(p^2/4),$ \eqref{eq432} adheres to the inequality  $(p/(2+ sp))\leq |\lambda^{\pm i}|.$ Notably, the function $\Psi_2:\R\longrightarrow \R$ defined by $\Psi_2(p)=p/(2+ sp),$ where $p,s>0,$ is monotonically increasing in the argument $p> 0,$ we obtain the following bound:
\begin{align}\label{eq437}
|\lambda^{\pm i}|=\Psi_2(p)\geq \Psi_2(\xi_{\min})= \frac{\xi_{\min}}{2+s\xi_{\min}}.
		\end{align}
		Hence, combining \eqref{eq436} and \eqref{eq437}, proof for the part $(1)$ follows.
		\vone 
		\noin $(2)$\,\,     Next, consider the case $Cv\neq 0,$ then  \eqref{eq42c} yields
		$w=\frac{1}{\mu}\Lambda_3^{-1}Cv,$ where $\mu=\lambda/(1-s\lambda).$ Substituting this in \eqref{eq42b}, we get 
		\begin{equation}\label{Eq438}
			Bu+\frac{1}{\mu} C^T\Lambda_3^{-1}Cv=-\mu \Lambda_2v.
		\end{equation}
		On the other side, nonsingularity of $(\mu \Lambda_1-A)$ and \eqref{eq42a} enables us the following identity:
		\begin{equation}\label{eq438}
			u=(\mu \Lambda_1-A)^{-1}B^Tv.
		\end{equation}
		Putting \eqref{eq438} on \eqref{Eq438}, we get  
		\begin{equation}\label{eq439}
			B(\mu \Lambda_1-A)^{-1}B^Tv+\frac{1}{\mu} C^T\Lambda_3^{-1}Cv=-\mu \Lambda_2 v.
		\end{equation}
		As $v\neq 0,$  premultiplying by $v^*$ on the both sides of \eqref{eq439} yields 
		\begin{equation}\label{eq440}
			v^*B(\mu \Lambda_1-A)^{-1}B^Tv+\frac{1}{\mu} v^*C^T\Lambda_3^{-1}Cv=-\mu v^{*}\Lambda_2 v.
		\end{equation}
		Consider the following eigenvalue decomposition of $ \Lambda_1^{-\frac{1}{2}}A \Lambda_1^{-\frac{1}{2}}$:
		\begin{equation*}
			\Lambda_1^{-\frac{1}{2}}A \Lambda_1^{-\frac{1}{2}}=V\mathcal{D}V^T,
		\end{equation*}
		where $\mathcal{D}=\diag(\theta_i)\in \R^{n\times n}$ and $\theta_i>0,\, i=1,2,\ldots,n,$ are in $\sigma ( \Lambda_1^{-\frac{1}{2}}A \Lambda_1^{-\frac{1}{2}})$ and $V\in \R^{n\times n}$ is an orthonormal matrix. Then, \eqref{eq440} can be written  as
		\begin{equation}\label{eq442}
			v^*B\Lambda_1^{-\frac{1}{2}}V(\mu I-\mathcal{D})^{-1}V^T\Lambda_1^{-\frac{1}{2}}B^Tv+\frac{1}{\mu} v^*C^T\Lambda_3^{-1}Cv= -\mu v^*\Lambda_2 v.
		\end{equation}
		Since $(\mu I-\mathcal{D})^{-1}=\Theta_1- {\bf 
			\iu}\Im(\mu)\Theta_2,$ where ${\bf 
			\iu}$ denotes the imaginary unit and
		
		$$\Theta_1= \dm{\diag\left(\frac{\Re(\mu)- \theta_i}{(\Re(\mu)- \theta_i)^2+ \Im(\mu)^2}\right), ~ \Theta_2= \diag\left(\frac{1}{(\Re(\mu)- \theta_i)^2+ \Im(\mu)^2}\right)},$$ 
		from  \eqref{eq442} and the fact $\Im(\mu)\neq 0,$ we get
		\begin{eqnarray}\label{eq443}
			& v^*B\Lambda_1^{-\frac{1}{2}}V\Theta_1V^T\Lambda_1^{-\frac{1}{2}}B^Tv+\frac{\Re(\mu)}{|\mu|^2} v^*C^T\Lambda_3^{-1}Cv=-\Re(\mu) v^*\Lambda_2 v,\\ \label{eq444}
			&v^*B\Lambda_1^{-\frac{1}{2}}V\Theta_2V^T\Lambda_1^{-\frac{1}{2}}B^Tv+\frac{1}{|\mu|^2} v^*C^T\Lambda_3^{-1}Cv=v^*\Lambda_2 v.
		\end{eqnarray}
		 Observed that, $\Theta_2\leq \frac{1}{\Im(\mu)^2}I,$ then from\eqref{eq444}, we obtain 
		\begin{equation*}
			1\leq \frac{1}{\Im(\mu)^2}\left(\frac{v^*B\Lambda_1^{-1}B^Tv}{v^*\Lambda_2v}+\frac{v^*C^T\Lambda_3^{-1}Cv}{v^*\Lambda_2v}\right).
		\end{equation*}
		Therefore, we get 
		\begin{equation}\label{eq446}
			|\Im(\mu)|\leq    \sqrt{\eta_{\max}+\theta_{\max}}.
		\end{equation}
		Furthermore, notice that  $\Theta_1\geq \left(\Re(\mu)-\xi_{\max}\right)\Theta_2,$ then from \eqref{eq443}, we deduce
		\begin{equation}\label{EQ448}
			-\Re(\mu)\geq \left(\Re(\mu)-\xi_{\max}\right)\frac{v^*B\Lambda_1^{-\frac{1}{2}}V\Theta_2V^T\Lambda_1^{-\frac{1}{2}}B^Tv}{v^*\Lambda_2 v}+\frac{\Re(\mu)}{|\mu|^2}\frac{v^*C^T\Lambda_3^{-1}Cv}{v^*\Lambda_2 v}.
		\end{equation}
		Using \eqref{eq444} to \eqref{EQ448}, we get 
		\begin{equation}\label{eq448}
			2\Re(\mu)\leq \xi_{\max}
			-\frac{\xi_{\max}}{|\mu|^2}\frac{v^*C^T\Lambda_3^{-1}Cv}{v^*\Lambda_2 v}\leq \xi_{\max}.
		\end{equation}
		Hence, \eqref{eq448} yields the following bound:
		\begin{equation}\label{Eq449}
			\Re(\mu)\leq \frac{\xi_{\max}}{2}.
		\end{equation}
		On the other side,  from \eqref{eq443} and \eqref{eq444}, we deduce that
		\begin{equation}\label{eq449}
			2\Re(\mu)=\frac{v^*B\Lambda_1^{-\frac{1}{2}}V(\Re(\mu)\Theta_2-\Theta_1)V^T\Lambda_1^{-\frac{1}{2}}B^Tv}{v^*\Lambda_2 v}\geq \xi_{\min}\, \frac{v^*B\Lambda_1^{-\frac{1}{2}}V\mathcal{X}V^T\Lambda_1^{-\frac{1}{2}}B^Tv}{v^*\Lambda_2 v},
		\end{equation}
		where $\dm{\mathcal{X}=\diag \left(\frac{1}{(\Re(\mu)-\theta_i)^2+\Im(\mu)^2}\right).}$  Note that 
		
		$$\dm{\frac{1}{(\Re(\mu)-\theta_i)^2+\Im(\mu)^2}\geq \frac{1}{\displaystyle{\max_i}(\Re(\mu)-\theta_i)^2+\Im(\mu)^2}.}$$ Now, using Proposition \ref{prop1} and \eqref{Eq449}, we get 
		\[-\theta_i<\Re(\mu)-\theta_i\leq \frac{\xi_{\max}}{2}-\theta_i,
		\]
		which further yields $(\Re(\mu)-\theta_i)^2\leq \max\left\{\left(\frac{\xi_{\max}}{2}-\theta_i\right)^2,\theta_i^2\right\}.$ Hence, we get 
		\begin{eqnarray}\label{eq450}
			\nonumber \displaystyle{\max_{i}}(\Re(\mu)-\theta_i)^2&=&\displaystyle{\max_{i}}\{(\Re(\mu)-\xi_{\min})^2, (\Re(\mu)-\xi_{\max})^2\}\\ 
			&\leq & \displaystyle{\max_{i}} \left\{\left(\frac{\xi_{\max}}{2}-\xi_{\min}\right)^2,\xi^2_{\max}\right\}= \xi^2_{\max}.
		\end{eqnarray}
		Therefore, by \eqref{eq450} and from the bound in \eqref{eq446}, we obtain 
		\begin{equation}\label{eq453}
			\mathcal{X}\geq \frac{I}{\xi^2_{\max}+\eta_{\max}+ \theta_{\max}}.
		\end{equation}
		Combining \eqref{eq449} and \eqref{eq453} leads to the following bounds: 
		\begin{eqnarray}\label{eq455}
			   2\Re(\mu)&\geq&\frac{\xi_{\min}}{\xi^2_{\max}+ \eta_{\max}+ \theta_{\max}} \frac{v^*B\Lambda_1^{-1}B^Tv}{v^*\Lambda_2 v} \geq\frac{\xi_{\min}\eta_{\min}}{\xi^2_{\max}+\eta_{\max}+ \theta_{\max}}.
		\end{eqnarray}
		 Hence, the proof of part $(2)$ follows by merging the inequalities of \eqref{eq446} and \eqref{eq455}. $\blacksquare$
    \begin{remark}
       From the bounds in $(2)$ of Theorem \ref{th43}, we obtain  $${\frac{1}{|\tau|^2}\left(\frac{\xi_{\min}\eta_{\min}}{2(\xi_{\max}^2+\eta_{\max}+\theta_{\max})}+\frac{1}{s}\right)\leq \frac{1}{s}-\Re(\lambda)\leq \frac{1}{|\tau|^2}\left(\frac{\xi_{\max}}{2}+\frac{1}{s}\right)}$$
     and  
 $$|\Im(\lambda)|\leq \frac{1}{s\tau \bar{\tau}}\sqrt{\eta_{\max}+\theta_{\max}},$$ where $\tau=s\mu+1.$ Thus, the real and imaginary parts of the eigenvalues of the preconditioned matrix $\P_{PESS}^{-1}\A$ cluster better as $s$ grows. Therefore, the \textit{PESS} preconditioner can accelerate the rate of convergence of the Krylov subspace process, like \textit{ GMRES}.
  \end{remark}
		
		Utilizing the established bounds in Theorem \ref{th43}, we derive the subsequent estimations for non-real eigenvalues of  {\it SS} and {\it EGSS} preconditioned matrices.
		\begin{corollary}
			Let  $\P_{SS}$ be defined as in \eqref{eq14} with $\alpha>0$ and let  $\lambda$ be a non-real eigenvalue of the  {\it SS} preconditioned matrix $\P_{SS}^{-1}\A$ with $\Im(\lambda)\neq 0$ and ${\bf p}=[u^T, v^T, w^T]^T$ is the corresponding eigenvector. Then,  
		
		\vone
	\noin 			$(1) $\,\, If $Cv=0,$    $\lambda$ satisfies
				\begin{equation*}
					\frac{2\kappa_{\min}}{2\alpha+\kappa_{\min}}     \leq |\lambda|\leq \sqrt{\frac{4\tau_{\max}}{\alpha^2+\alpha\kappa_{\min}+\tau_{\max}}}.
				\end{equation*}  \label{cr431}    
\noin				$(2) $ \,\, If $Cv\neq 0,$ the real and imaginary part of $\lambda/ (2-\lambda)$ satisfies
				\begin{eqnarray*}
					&& \frac{\kappa_{\min} \, \tau_{\min}}{2\alpha\left(\kappa^2_{\max}+\tau_{\max}+ \beta_{\max}\right)}\leq \Re\left(\frac{ \lambda}{2-\lambda}\right)\leq \frac{\kappa_{\max}}{2\alpha}~ \mbox{and}~\left|\Im\left(\frac{\lambda}{2-\lambda}\right)\right|\leq \frac{\sqrt{\tau_{\max}+\beta_{\max}}}{\alpha},
				\end{eqnarray*} \label{cr432}         where $\kappa_{\min}=\lambda_{\min}(A),$ $\tau_{\max}=\lambda_{\max}(BB^T),$ $\tau_{\min}=\lambda_{\min}(BB^T)$ and $\beta_{\max}=\lambda_{\max}(C^TC).$
		\end{corollary}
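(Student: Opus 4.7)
The plan is to derive this corollary as a direct specialization of Theorem~\ref{th43}, exploiting the fact that $\P_{SS}$ equals $\P_{PESS}$ under the parameter choice $\Lambda_1=\Lambda_2=\Lambda_3=\frac{1}{2}\alpha I$ and $s=\frac{1}{2}$ (as noted in Table~\ref{tab1}). Since Theorem~\ref{th43} already handles the analytic heart of the argument, the corollary becomes essentially a substitution-and-simplification exercise. First, I would verify that all hypotheses of Theorem~\ref{th43} are met: $\alpha>0$ ensures the scalar multiples of the identity are \textit{SPD}, and the structural assumptions on $A,B,C$ are inherited directly from the corollary's hypotheses.

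Next, I would translate the abstract quantities $\xi_{\max},\xi_{\min},\eta_{\max},\eta_{\min},\theta_{\max}$ appearing in \eqref{eq:noation1}--\eqref{eq:noation2} into the concrete quantities $\kappa_{\min},\kappa_{\max},\tau_{\min},\tau_{\max},\beta_{\max}$ of the corollary. A short computation gives $\xi_{\max}=\frac{2\kappa_{\max}}{\alpha}$, $\xi_{\min}=\frac{2\kappa_{\min}}{\alpha}$, $\eta_{\max}=\frac{4\tau_{\max}}{\alpha^2}$, $\eta_{\min}=\frac{4\tau_{\min}}{\alpha^2}$, and $\theta_{\max}=\frac{4\beta_{\max}}{\alpha^2}$, because the $\Lambda_i^{-1}$ factors contribute a scalar $\frac{2}{\alpha}$ each.

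For part~(1) the plan is to substitute these expressions, together with $s=\tfrac{1}{2}$, into the two-sided bound $\frac{\xi_{\min}}{2+s\xi_{\min}}\leq|\lambda|\leq\sqrt{\frac{\eta_{\max}}{1+s\xi_{\min}+s^2\eta_{\max}}}$ from Theorem~\ref{th43}(1). The denominator of the lower bound becomes $2+\kappa_{\min}/\alpha$, yielding $\frac{2\kappa_{\min}}{2\alpha+\kappa_{\min}}$ after clearing $\alpha$. For the upper bound, the radicand's denominator becomes $1+\kappa_{\min}/\alpha+\tau_{\max}/\alpha^2=(\alpha^2+\alpha\kappa_{\min}+\tau_{\max})/\alpha^2$, which cancels the $\alpha^2$ in the numerator to give the stated expression.

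For part~(2), the one algebraic subtlety is the change of variable: Theorem~\ref{th43}(2) bounds $\mu=\lambda/(1-s\lambda)$, whereas the corollary is stated in terms of $\lambda/(2-\lambda)$. With $s=\tfrac{1}{2}$, one checks that $\mu=2\lambda/(2-\lambda)$, so $\lambda/(2-\lambda)=\mu/2$, and every bound on $\mathrm{Re}(\mu)$ and $|\mathrm{Im}(\mu)|$ is halved. Substituting the concrete values of $\xi_{\max},\xi_{\min},\eta_{\max},\eta_{\min},\theta_{\max}$ and then dividing by two reproduces the three inequalities claimed. The only place I would double-check carefully is the lower bound on $\mathrm{Re}(\lambda/(2-\lambda))$, since factors of $\alpha$ from both $\Lambda_1^{-1}$ and $\Lambda_2^{-1}$ combine nontrivially in the ratio $\frac{\xi_{\min}\eta_{\min}}{\xi_{\max}^2+\eta_{\max}+\theta_{\max}}$; tracking these powers of $\alpha$ is really the only potential source of slip. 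No genuine obstacle arises, as the entire proof is reducible to bookkeeping once Theorem~\ref{th43} is in hand.
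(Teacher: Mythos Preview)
Your proposal is correct and follows exactly the same approach as the paper: the paper's proof simply states that since $\P_{SS}$ is the special case $\Lambda_1=\Lambda_2=\Lambda_3=\tfrac{1}{2}\alpha I$, $s=\tfrac{1}{2}$ of $\P_{PESS}$, the bounds follow by substitution into Theorem~\ref{th43}. Your write-up actually supplies more detail than the paper does, and your translation of $\xi_{\max},\xi_{\min},\eta_{\max},\eta_{\min},\theta_{\max}$ and the handling of the factor $\mu=2\lambda/(2-\lambda)$ are all correct.
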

		\proof 
		Since $\P_{SS}$ is a special case of $\P_{PESS}$ for $s= 1/2$ as discussed in Table \ref{tab1}, then desired  bounds will be obtained by setting $\Lambda_1=\Lambda_2= \Lambda_3= \frac{1}{2}\alpha I$  and $s=1/2$  into Theorem \ref{th43}. $\blacksquare$ 
		\begin{corollary}
			Let  $\P_{EGSS}$ be defined as in \eqref{eq15} and let $P,Q$ and $W$ are  {\it SPD} matrices with $\alpha ,\beta,\gamma>0.$ Let $\lambda$ be a non-real eigenvalue of the {\it EGSS} preconditioned matrix $\P_{EGSS}^{-1}\A$  with $\Im(\lambda)\neq 0$  and ${\bf p}=[u^T, v^T, w^T]^T$ is the corresponding eigenvector. Then
	\vone
		\noin 	$(1)$  If $Cv=0,$  $\lambda$ satisfies
				\begin{equation*}
					\frac{2\tilde{\kappa}_{\min}}{2\alpha+\tilde{\kappa}_{\min}}    \leq |\lambda|\leq \sqrt{\frac{4\tilde{\tau}_{\max}}{\alpha \beta+\beta\tilde{\kappa}_{\min}+\tilde{\tau}_{\max}}}.
				\end{equation*}  \label{cr441}    
			\noin 	$(2)$\,\, If $Cv\neq 0,$   the real and imaginary part of $\lambda/(2-\lambda)$ satisfies
				\begin{eqnarray*}
					&& \frac{\tilde{\kappa}_{\min} \, \tilde{\tau}_{\min}}{2\left(\beta\tilde{\kappa}^2_{\max}+\alpha\tilde{\tau}_{\max}+(\alpha^2/\gamma)\tilde{\beta}_{\max}\right)}\nonumber  \leq  \Re\left(\frac{\lambda}{2-\lambda}\right) \leq  \frac{\tilde{\kappa}_{\max}}{2\alpha}~ \mbox{and}\\
					&&    \left|\Im\left(\frac{\lambda}{2- \lambda}\right)\right| \leq \sqrt{\frac{1}{\alpha \beta}\tilde{\tau}_{\max}+ \frac{1}{\beta \gamma}\tilde{\beta}_{\max}},
				\end{eqnarray*} 
    where $\tilde{\kappa}_{\min}=\lambda_{\min}(P^{-1}A),$ $\tilde{\tau}_{\max}=\lambda_{\max}(Q^{-1}BP^{-1}B^T),$ $\tilde{\tau}_{\min}=\lambda_{\min}(Q^{-1}BP^{-1}B^T)$ and $\tilde{\beta}_{\max}=\lambda_{\max}(C^TC).$  		
		\end{corollary}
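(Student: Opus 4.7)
The plan is to recognize that the final corollary is a direct specialization of Theorem \ref{th43}, exactly as in the preceding corollary for $\P_{SS}$. According to Table \ref{tab1}, the \textit{EGSS} preconditioner $\P_{EGSS}$ coincides with the \textit{PESS} preconditioner $\P_{PESS}$ for the choices $\Lambda_1 = \tfrac{1}{2}\alpha P$, $\Lambda_2 = \tfrac{1}{2}\beta Q$, $\Lambda_3 = \tfrac{1}{2}\gamma W$, and $s = 1/2$. Therefore, every hypothesis of Theorem \ref{th43} is satisfied and the bounds stated in the corollary should be obtained by substituting these specific values into the bounds of Theorem \ref{th43} and simplifying.

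First, I would translate the spectral quantities appearing in Theorem \ref{th43}. Using the homogeneity $\lambda_{\max}(cM^{-1}N) = c\,\lambda_{\max}(M^{-1}N)$, the substitution yields
\begin{align*}
\xi_{\max} &= \lambda_{\max}(\Lambda_1^{-1}A) = \tfrac{2}{\alpha}\tilde{\kappa}_{\max}, & \xi_{\min} &= \tfrac{2}{\alpha}\tilde{\kappa}_{\min},\\
\eta_{\max} &= \lambda_{\max}(\Lambda_2^{-1}B\Lambda_1^{-1}B^T) = \tfrac{4}{\alpha\beta}\tilde{\tau}_{\max}, & \eta_{\min} &= \tfrac{4}{\alpha\beta}\tilde{\tau}_{\min},\\
\theta_{\max} &= \lambda_{\max}(\Lambda_2^{-1}C^T\Lambda_3^{-1}C) = \tfrac{4}{\beta\gamma}\tilde{\beta}_{\max}.
\end{align*}
I would also use that with $s = 1/2$, the quantity $\mu = \lambda/(1-s\lambda)$ featured in Theorem \ref{th43} becomes $\mu = 2\lambda/(2-\lambda)$, so $\Re(\mu) = 2\Re(\lambda/(2-\lambda))$ and $\Im(\mu) = 2\Im(\lambda/(2-\lambda))$; this accounts for the factor of $2$ in the statement.

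Next, I would plug these identities into the two bounds of Theorem \ref{th43}. For part $(1)$ with $Cv = 0$, inserting $s=1/2$ into $\xi_{\min}/(2+s\xi_{\min})$ and $\sqrt{\eta_{\max}/(1+s\xi_{\min}+s^2\eta_{\max})}$ and clearing denominators gives precisely the bounds $2\tilde{\kappa}_{\min}/(2\alpha + \tilde{\kappa}_{\min}) \leq |\lambda| \leq \sqrt{4\tilde{\tau}_{\max}/(\alpha\beta+\beta\tilde{\kappa}_{\min}+\tilde{\tau}_{\max})}$. For part $(2)$ with $Cv \neq 0$, substituting the expressions above into the bounds $\xi_{\min}\eta_{\min}/[2(\xi_{\max}^2+\eta_{\max}+\theta_{\max})] \leq \Re(\mu) \leq \xi_{\max}/2$ and $|\Im(\mu)| \leq \sqrt{\eta_{\max}+\theta_{\max}}$, and then dividing by $2$ (to pass from $\mu$ to $\lambda/(2-\lambda)$), produces the announced bounds after multiplying numerator and denominator of the lower real-part bound through by $\alpha^2\beta$.

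There is no substantive obstacle in this proof; the only thing to take care of is bookkeeping the multiplicative constants introduced by the factors $\tfrac{1}{2}\alpha$, $\tfrac{1}{2}\beta$, $\tfrac{1}{2}\gamma$ (and $s=1/2$) in passing from $\Lambda_i$-quantities to $P,Q,W$-quantities, and remembering the factor $2$ coming from $\mu = 2\lambda/(2-\lambda)$. The proof thus reduces to a short verification, entirely parallel to the earlier corollary for $\P_{SS}$, and can be written as a one-line appeal to Theorem \ref{th43} with the substitution recorded in Table \ref{tab1}.
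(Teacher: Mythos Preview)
Your proposal is correct and follows exactly the same approach as the paper: the paper's proof is a one-line appeal to Theorem \ref{th43} with the substitutions $\Lambda_1=\tfrac{1}{2}\alpha P$, $\Lambda_2=\tfrac{1}{2}\beta Q$, $\Lambda_3=\tfrac{1}{2}\gamma W$, $s=\tfrac{1}{2}$ from Table \ref{tab1}. You have merely made the bookkeeping explicit, which the paper omits.
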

		\proof 
		Since $\P_{EGSS}$ is a special case of $\P_{PESS}$ for $s= 1/2$ as discussed in Table \ref{tab1}, then the desired  bounds will be obtained by setting $\Lambda_1= \frac{1}{2}\alpha P,$ $\Lambda_2= \frac{1}{2}\beta Q$, $\Lambda_3= \frac{1}{2}\gamma W$ and $s= 1/2$ in  Theorem \ref{th43}.  $\blacksquare$

  
  \section{Local \textit{PESS} (\textit{LPESS}) preconditioner}\label{sec:LPESS}

  To enhance the efficiency of the \textit{PESS} preconditioner, in this section,  we propose a relaxed version of the \textit{PESS} preconditioner by incorporating the concept of \textit{RSS} preconditioner \cite{CAOSS19}.  By omitting the term $\Lambda_1$ from the $(1,1)$-block of $\P_{PESS}$, we present the local \textit{PESS} (\textit{LPESS}) preconditioner, denoted as $\P_{LPESS}$, defined as follows:
    \begin{eqnarray}
        \P_{LPESS}:=\bmatrix{A & sB^T &0\\ -sB & \Lambda_2&-sC^T\\ 0& sC& \Lambda_3}.
    \end{eqnarray}
    The implementation of the \textit{LPESS} preconditioner is similar to the Algorithm \ref{alg1}. However, there is one modification in step $3,$ i.e., we need to solve a linear subsystem $(sA+ s^2B^T\widehat{X}^{-1}B)w_1=v$ instead of $\widehat{A}w_1=v.$

    To illustrate the efficiency of the \textit{LPESS} preconditioner, we study the spectral distribution of the preconditioned matrix $\P_{LPESS}^{-1}\A.$ To achieve this, we consider the following decomposition of the matrices $\A$ and $\P_{LPESS} $ as follows:
    \begin{eqnarray}\label{sec5:eq52}
\A=\mathfrak{L}\mathfrak{D}\mathfrak{U}~ \text{and}~ \P_{LPESS}=\mathfrak{L}\widetilde{\mathfrak{D}}\mathfrak{U},
    \end{eqnarray}
    where $$\mathfrak{L}=\bmatrix{I & 0& 0\\ -BA^{-1}& I&0\\ 0&0&I},\mathfrak{U}=\bmatrix{I & A^{-1}B^T& 0\\ 0& I&0\\ 0&0&I},\mathfrak{D}=\bmatrix{A&0&0\\0&Q&-C^T\\0&C&0}, \widetilde{\mathfrak{D}}=\bmatrix{sA&0&0\\0&\Lambda_2+sQ&-sC^T\\0&sC&\Lambda_3}$$ and $Q=BA^{-1}B^T.$
    Using the decomposition in \eqref{sec5:eq52}, we have 
    \begin{eqnarray}
\P^{-1}_{LPESS}\A=\mathfrak{U}^{-1}\bmatrix{s^{-1}I&0\\ 0&M^{-1}K}\mathfrak{U},
    \end{eqnarray}
    where $M=\bmatrix{\Lambda_2+sQ& -sC^T\\ sC&\Lambda_3}$ and $K=\bmatrix{Q& -C^T\\C& 0}.$ Since, $\P^{-1}_{LPESS}\A$ is similar to  $\bmatrix{s^{-1}I&0\\ 0&M^{-1}K},$ $\P_{LPESS}^{-1}\A$ has eigenvalue $1/s$ with multiplicity $n$ and the remaining eigenvalues satisfies the generalized eigenvalue problem $Kp=\lambda Mp.$
    \begin{theorem}\label{theorem:RPESS}
    Let $A\in \R^{n\times n}, \Lambda_2\in \R^{m\times m}$ and $\Lambda_3\in \R^{p\times p}$  be  {\it SPD} matrices and let  $B\in \R^{m\times n}$ and $C\in \R^{p\times m}$ be full row rank matrices. Assume that $s> 0,$  then \textit{LPESS} preconditioned matrix $\P_{LPESS}^{-1}\A$ has $n$ repeated eigenvalues equal to $1/s.$ The remaining $m+p$ eigenvalues satisfies the following:
    \begin{enumerate}
        \item[$(1)$] real eigenvalues located in the interval $$\Big[ \min\Big\{\frac{\vartheta_{\min}}{1+s\vartheta_{\min}}, \frac{\tilde{\theta}_{\min}}{\vartheta_{\max}+s\tilde{\theta}_{\min}}\Big\}, \frac{\vartheta_{\max}}{1+s\vartheta_{\max}}\Big].$$
        \item  [$(2)$] If $\lambda$ is any non-real eigenvalue (i.e., $\Im(\lambda)\neq 0$), then 
        \begin{eqnarray*}
            \frac{\vartheta_{\min}}{2+s\vartheta_{\min}}\leq |\lambda|\leq \sqrt{\frac{\tilde{\theta}_{\max}}{1+s\vartheta_{\min}+s^2\tilde{\theta}_{\max}}}~\text{and}~
          \frac{1}{s(1+s\sqrt{\tilde{\theta}_{\max}})}  \leq |\lambda-\frac{1}{s}|\leq \frac{2}{s(2+s\vartheta_{\min})},
        \end{eqnarray*}
        \end{enumerate}
        where $\vartheta_{\max}:=\lambda_{\max}(\Lambda_2^{-1}Q),$ $\vartheta_{\min}:=\lambda_{\min}(\Lambda_2^{-1}Q)$ $\tilde{\theta}_{\max}:=\lambda_{\max}(\Lambda_3^{-1}C\Lambda_1^{-2}C^T)$ and  $\tilde{\theta}_{\min}:=\lambda_{\min}(\Lambda_3^{-1}C\Lambda_1^{-2}C^T).$
    \end{theorem}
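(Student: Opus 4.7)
I plan to start from the similarity $\P_{LPESS}^{-1}\A \sim \diag(s^{-1}I,\, M^{-1}K)$ recorded just before the statement; this immediately accounts for the $n$ eigenvalues equal to $1/s$, so everything reduces to analysing the generalized eigenproblem $K\mathbf{p}=\lambda M\mathbf{p}$ with $\mathbf{p}=[v^T,w^T]^T$. A quick check shows $\lambda=1/s$ is impossible here: plugging $\lambda=1/s$ into $K\mathbf{p}=\lambda M\mathbf{p}$ forces $\Lambda_2 v=0$ and then $\Lambda_3 w=0$, contradicting $\mathbf{p}\neq 0$. This legitimises the substitution $\mu=\lambda/(1-s\lambda)$, under which $\lambda=\mu/(1+s\mu)$ and $\lambda-1/s=-1/(s(1+s\mu))$; once $\mu$ is bounded, both quantities in parts~(1) and~(2) follow mechanically.

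The next step is to reduce to a scalar quadratic. The second block row of $K\mathbf{p}=\lambda M\mathbf{p}$ gives $w=(1/\mu)\Lambda_3^{-1}Cv$, and one checks $v\neq 0$ (otherwise $C^Tw=0$ and full column rank of $C^T$ forces $w=0$). Substituting $w$ into the first block row yields
\begin{equation*}
\mu^2 \Lambda_2 v - \mu Q v + C^T\Lambda_3^{-1}C v = 0,
\end{equation*}
and premultiplying by $v^*$ produces $\mu^2 - a\mu + b = 0$ with $a= v^*Qv/v^*\Lambda_2 v$ and $b= v^*C^T\Lambda_3^{-1}Cv/v^*\Lambda_2 v$. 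Rayleigh-quotient arguments after a $\Lambda_2^{-1/2}$-congruence give $a\in[\vartheta_{\min},\vartheta_{\max}]$, while $b=0$ when $Cv=0$ and otherwise $b\in[\tilde{\theta}_{\min},\tilde{\theta}_{\max}]$; these are the only facts about $a$ and $b$ that the rest of the argument will use.

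For part~(1) I split on whether $Cv=0$. If $Cv=0$ then $b=0$, the quadratic collapses to $\mu=a\in[\vartheta_{\min},\vartheta_{\max}]$, and the monotonicity of $\mu\mapsto\mu/(1+s\mu)$ on $(0,\infty)$ places $\lambda$ in $[\vartheta_{\min}/(1+s\vartheta_{\min}),\,\vartheta_{\max}/(1+s\vartheta_{\max})]$. If $Cv\neq 0$, the real roots $\mu^{\pm}$ have positive sum $a$ and positive product $b$, hence are both positive; Vieta gives $\mu^+\leq a\leq\vartheta_{\max}$ and $\mu^-=b/\mu^+\geq \tilde{\theta}_{\min}/\vartheta_{\max}$. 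Applying the same monotone map and taking the worse lower endpoint across the two cases produces the stated interval.

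For part~(2), non-real $\mu$ forces the discriminant negative, so $a^2<4b$, and the complex-conjugate roots satisfy $|\mu|^2=b$ and $\Re(\mu)=a/2$, which yields the key identity $|1+s\mu|^2=1+sa+s^2b$. Hence
\begin{equation*}
|\lambda|^2=\frac{b}{1+sa+s^2b}, \qquad \left|\lambda-\tfrac{1}{s}\right|^2=\frac{1}{s^2(1+sa+s^2b)}.
\end{equation*}
This ratio is increasing in $b$ and decreasing in $a$, so the upper bound on $|\lambda|$ comes from $(a,b)=(\vartheta_{\min},\tilde{\theta}_{\max})$, while the lower bound exploits $b>a^2/4$ to produce $|\lambda|>a/(2+sa)\geq \vartheta_{\min}/(2+s\vartheta_{\min})$. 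The same trick $b>a^2/4$ gives $1+sa+s^2b>(1+sa/2)^2$ and hence the upper bound on $|\lambda-1/s|$. The main obstacle---the lower bound on $|\lambda-1/s|$---requires $1+sa+s^2b\leq(1+s\sqrt{\tilde{\theta}_{\max}})^2$; I will get this by combining $a<2\sqrt{b}\leq 2\sqrt{\tilde{\theta}_{\max}}$ (from the discriminant inequality together with $b\leq\tilde{\theta}_{\max}$) with $sb\leq s\tilde{\theta}_{\max}$, so that $a+sb\leq 2\sqrt{\tilde{\theta}_{\max}}+s\tilde{\theta}_{\max}$, which is precisely what the target inequality reduces to.
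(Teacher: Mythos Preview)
Your plan is genuinely different from the paper's argument: the paper does not redo the analysis but instead conjugates $M^{-1}K$ by $\mathcal{F}=\diag(I,-I)$ to obtain $\widetilde M^{-1}\widetilde K$ with $\widetilde K=\bigl[\begin{smallmatrix}Q & C^T\\ -C & 0\end{smallmatrix}\bigr]$ and $\widetilde M=\bigl[\begin{smallmatrix}\Lambda_2+sQ & sC^T\\ -sC & \Lambda_3\end{smallmatrix}\bigr]$, and then simply quotes Corollary~4.1 and Theorem~4.3 of \cite{NESS2019}. Your self-contained route via the substitution $\mu=\lambda/(1-s\lambda)$ and the scalar quadratic $\mu^2-a\mu+b=0$ is more informative, and your treatment of part~(2) is correct in full.

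There is, however, a real gap in part~(1). You assert that when $Cv\neq 0$ the Rayleigh quotient $b=\dfrac{v^*C^T\Lambda_3^{-1}Cv}{v^*\Lambda_2 v}$ lies in $[\tilde\theta_{\min},\tilde\theta_{\max}]$, and then conclude $\mu^-=b/\mu^+\geq \tilde\theta_{\min}/\vartheta_{\max}$. But $b$ is a Rayleigh quotient of the $m\times m$ matrix $\Lambda_2^{-1}C^T\Lambda_3^{-1}C$, which has rank $p<m$ and hence smallest eigenvalue $0$; the quantity $\tilde\theta_{\min}$ is the smallest eigenvalue of the $p\times p$ matrix $\Lambda_3^{-1}C\Lambda_2^{-1}C^T$, and $Cv\neq 0$ alone does not prevent $b$ from being arbitrarily small. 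A concrete instance: with $m=2$, $p=1$, $\Lambda_2=I$, $\Lambda_3=1$, $C=[1\ 0]$ and $Q=\bigl[\begin{smallmatrix}5.1 & 0.05\\ 0.05 & 0.0995\end{smallmatrix}\bigr]$, the vector $v=[0.01,\,1]^T$ is an eigenvector with $\mu=0.1$, yet $b\approx 10^{-4}\ll \tilde\theta_{\min}=1$ and the scalar quadratic has $\mu^-\approx 10^{-3}<\tilde\theta_{\min}/\vartheta_{\max}$. The scalar equation cannot by itself rule out that the eigenvalue is this $\mu^-$.

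The fix is to argue in $w$ rather than $v$ when $\mu<\vartheta_{\min}$. In that range $Q-\mu\Lambda_2$ is SPD, so $v=(Q-\mu\Lambda_2)^{-1}C^Tw$; substituting into $Cv=\mu\Lambda_3 w$ and pairing with $w^*$ gives $w^*C(Q-\mu\Lambda_2)^{-1}C^Tw=\mu\,w^*\Lambda_3 w$. Since $Q-\mu\Lambda_2\leq(\vartheta_{\max}-\mu)\Lambda_2$, one gets $(Q-\mu\Lambda_2)^{-1}\geq(\vartheta_{\max}-\mu)^{-1}\Lambda_2^{-1}$, hence $\mu\geq \dfrac{1}{\vartheta_{\max}-\mu}\cdot\dfrac{w^*C\Lambda_2^{-1}C^Tw}{w^*\Lambda_3 w}\geq \dfrac{\tilde\theta_{\min}}{\vartheta_{\max}-\mu}$, which yields $\mu\geq \tilde\theta_{\min}/\vartheta_{\max}$ (here the Rayleigh quotient is over the $p\times p$ matrix, so the lower bound $\tilde\theta_{\min}$ is legitimate). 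Combined with the trivial case $\mu\geq\vartheta_{\min}$, this closes the gap.
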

    \proof Observe that,  
    \begin{eqnarray}
M^{-1}K=\mathcal{F}^{-1}\widetilde{M}^{-1}\widetilde{K}\mathcal{F},
    \end{eqnarray}
  where $\mathcal{F}=\bmatrix{I &0\\0 &-I},$ $\widetilde{M}=\bmatrix{\Lambda_2+sQ & sC^T\\ -sC& \Lambda_3}$ and $\widetilde{K}=\bmatrix{Q& C^T\\ -C& 0}.$ Hence, $M^{-1}K$  is similar to $\widetilde{M}^{-1}\widetilde{K}.$  Now, $\widetilde{M}^{-1}\widetilde{K}$ is in the form of the preconditioned matrix discussed in \cite{NESS2019}.  Therefore, by applying the Corollary 4.1 and Theorem 4.3 of \cite{NESS2019}, we obtain the desired bounds in $(1)$ and $(2).$
    $\blacksquare$

\section{ {The strategy of parameter selection}}\label{Sec:parameter}
 
  {It is worth noting that the efficiency of the proposed \textit{PESS} preconditioner depends on the selection of involved  \textit{SPD} matrices $\Lambda_1, \Lambda_2, \Lambda_3$ and a positive real parameter $s.$ In this section, motivated by \cite{NESS2019, parameter2014, PSS2018}, we discuss a practical way to choose the parameter $s$ and the \textit{SPD} matrices for the effectiveness of the proposed preconditioners.
 }
 
  {It is well acknowledged that the optimal parameter of \textit{PESS} iteration process is obtained when $\vartheta(\mathcal{T})$ is minimized \cite{parameter2014}. For achieving this, similar to the approach in \cite{PSS2018}, we first define a function  $\varphi(s)=\|\mathcal{Q}_{PESS}\|^2_F$ depending on the parameter $s$. 
 Our aim is to minimize $\varphi(s).$ Then after some straightforward calculations, we obtain
 \begin{align*}
     \varphi(s)=\|\mathcal{Q}_{PESS}\|^2_F&=tr(\mathcal{Q}^T_{PESS}\mathcal{Q}_{PESS})\\
    &=\|\Lambda_1\|^2_F+\|\Lambda_2\|^2_F+\|\Lambda_3\|^2_F+(s-1)^2\|A\|^2_F+2(s-1)tr(\Lambda_1 A)\\
    &\quad+2(s-1)^2\|B\|_F^2)+2(s-1)^2\|C\|_F^2),
  \end{align*}
  where $tr(\cdot)$  and $\|\cdot\|_F$ denote the trace and the Frobenious norm of a matrix, respectively.
  Now we choose the parameter $s$ and \textit{SPD} matrices $\Lambda_1, \Lambda_2$ and $\Lambda_3$  to make $\varphi(s)$ as small  as possible. Since $\|A\|^2_F, \|B\|^2_F, \|C\|^2_F$ and $tr(\Lambda_1A)$ are positive,  we can select $s=1,$ then $ \varphi(s)=\|\Lambda_1\|^2_F+\|\Lambda_2\|^2_F+\|\Lambda_3\|^2_F.$   Thus if we choose $\|\Lambda_1\|_2, \|\Lambda_2\|_2, \|\Lambda_3\|_2\,\rightarrow 0,$ we have $\varphi(s)\rightarrow 0$ and consequently,  $\mathcal{Q}_{PESS}\rightarrow 0.$ } 

 {On the other hand, motivated by \cite{DK2020, YCAO2015}, we discuss another strategy for choosing the parameter $s$. Notice that in Algorithm \ref{alg1}, we need to solve two linear system with coefficient matrices $\widehat{X}=\Lambda_2+ s^2C^T\Lambda_3^{-1} C$ and $\widetilde{A}=\Lambda_1 +s A+s^2B^T \widehat{X}B.$ Similar to   \cite{DK2020, YCAO2015}, we choose  $s$ and $\|\Lambda_2\|_2$ as follows: 
\begin{eqnarray}\label{s_opt}
    s=\sqrt{\frac{\|\Lambda_2\|_2}{\|C^T\Lambda_3^{-1}C\|_2}} ~~\text{and}~~ \|\Lambda_2\|_2=\frac{\|B\|_2^{4}}{{4\|C^T\Lambda_3^{-1}C\|_2\|A\|_2^2}},
\end{eqnarray} 
which balance the matrices $\Lambda_2$ and $C^T\Lambda_3^{-1}C$ in $\widehat{X}$ and  the matrices $A$ and $C^T\Lambda_3^{-1}C$ in $\widetilde{A}$. In this case, we denote $s$ by  $s_{est}$ and  
$\|\Lambda_2\|_2$ by $\beta_{est}.$ Numerical results are presented in Section \ref{sec5} to demonstrate the effectiveness of $\P_{PESS}$ for the above choices of the parameters.}

  {In the sequel, we can rewrite the \textit{PESS} preconditioner as 
$\P_{PESS}=s\widetilde{\P}_{PESS},$
 where $$\widetilde{\P}_{PESS}=\bmatrix{\frac{1}{s}\Lambda_1+A& B^T &0\\ -B& \frac{1}{s}\Lambda_2&-C^T\\ 0&C&\frac{1}{s}\Lambda_3}.$$ Since the prefactor $s$ has not much effect on the performance of \textit{PESS} preconditioner, investigating the optimal parameters of $\P_{PESS}$ and $\widetilde{\P}_{PESS}$ are equivalent. A general criterion for a preconditioner to perform efficiently is that it should closely approximate the coefficient matrix $\A$ \cite{MBenzi2005}. Consequently, the difference $\widetilde{\P}_{PESS}-\A=\dm{\frac{1}{s}\Sigma}$  approaches zero matrix as $s$ tends to positive infinity for fixed $\Lambda_1, \Lambda_2$ and $\Lambda_3.$  Thus, the preconditioner \textit{PESS} shows enhance efficiency for large value of $s.$ However, $s$ can not be too large as  the coefficient matrix $\widetilde{A}$ of the linear subsystem in step $3$ of Algorithm \ref{alg1} becomes very-ill conditioned. Similar investigations also hold for \textit{LPESS} preconditioner. Nevertheless, in Figure \ref{IT_vs_s}, we show the adaptability of the \textit{PESS} and \textit{LPESS} preconditioners by varying the parameter $s.$}
 \section{Numerical  experiments}\label{sec5}
	In this section, we conduct a few numerical experiments to showcase the superiority and efficiency of the proposed \textit{PESS} and \textit{LPESS} preconditioners over the existing preconditioners to enhance the convergence speed of the  Krylov subspace iterative process to solve three-by-three block {\it SPPs.} Our study involves a comparative analysis among {\it GMRES} process and the {\it PGMRES} process employing the proposed   preconditioners $\P_{PESS}$ and $\P_{LPESS}$  and the existing  {baseline} preconditioners $\P_{BD}$,  $\P_{IBD}$,  {$\P_{MAPSS}$, $\P_{SL},$} $\P_{SS},\P_{RSS},$ $\P_{EGSS}$  {and $\P_{RPGSS}$}. The numerical results are reported from the aspect of iteration counts (abbreviated as ``IT'') and elapsed CPU times in seconds (abbreviated as ``CPU'').  Each subsystem involving $\widehat{X}$ and $\widetilde{A}$ featured in \textbf{Algorithm \ref{alg1}} are precisely solved by applying the Cholesky factorization of the coefficient matrices. For all iterative process, the initial guess vector is ${\bf u}_0=0\in \R^{n+m+p}$ and the termination criterion is 
	$${\tt RES}:=\frac{\|\A{\bf u}^{k+1}-{\bf d}\|_2}{\|{\bf d}\|_2}<10^{-6}.$$
	The vector $\d\in \R^{n+m+p}$ is chosen so that the exact solution of the system \eqref{eq11} is ${\bf u}_*=[1, 1 ,\ldots, 1]^T\in \R^{n+m+p}.$ All numerical tests are run in MATLAB (version R2023a) on a Windows 11 operating system 
	with Intel(R) Core(TM) i7-10700 CPU, 2.90GHz, 16 GB memory. 

 
	\begin{exam}\label{ex1}
		\begin{table}[]
					\centering
			\caption{Numerical results of {\it GMRES}, {\it BD}, {\it IBD},  {\textit{MAPSS}, \textit{SL},} {\it SS}, {\it RSS}, {\it EGSS},  {\it RPGSS,} \textit{PESS} and {\it LPESS PGMRES} processes for Example \ref{ex1}.}
			\label{tab2}
			\resizebox{11.5cm}{!}{
				\begin{tabular}{cccccccc}
					\toprule
					Process& $l$	&$16$& $32$ &$48$& $64$ &$80$&  {128}  \\
					\midrule 
					& size($\A$)	& $1024$ &$4096$& $9216$ & $16384$ & $25600$ &  {$65536$} \\
					\midrule
					\multirow{3}{*}{\it GMRES}& IT& $865$& $3094$& $6542$&$\bf{--}$&$\bf{--}$ &  {$\bf{--}$} \\
					& CPU&  $0.6607$ & $101.7659$& $989.0242$& $2116.1245$& $3755.9251$&  {$8694.5762$}\\
					&{\tt RES}&$8.2852e-07$&$9.9189e-07$&$9.8389e-07$&$1.0813e-03$&$1.9862e-03$ &  {$0.0037$}\\	
					\midrule
					\multirow{3}{*}{\it BD}& IT& $4$&$4$&$4$&$4$&$4$ &  {$4$}\\
					& CPU&$ \textbf{0.0611} $&$2.3135$& $17.0971$& $105.4514$ &$337.4322$&  {$1355.6953$} \\
					&{\tt RES}&$1.2728e-13$&$1.7577e-13$ &$6.6653e-13$ &$6.0211e-12$ &$2.6610e-12$ &  {$6.4572e-07$}\\
					\midrule
					\multirow{3}{*}{\it IBD}& IT& $22$& $22$& $21$& $21$& $21$&  {$27$}\\
					& CPU&$\textbf{0.0702}$ &$1.1711$ & $9.9850$& $44.2197$& $141.1108$&  {$3064.8487$}\\
					&{\tt RES}&$4.0221e-07$ & $4.6059e-07$ & $9.1711e-07$ & $7.8518e-07$ & $6.8357e-07$&  {$9.8667e-07$}\\
      \midrule
					\multirow{3}{*}{ {\it MAPSS}}&  {IT}&  {$5$} & {$5$} & {$6$} &  {$6$} &  {$6$}&  {$7$}  \\
					&  {CPU}&  {$0.21299$}&
 {$1.2038$}&
 {$8.0537$}&
 {$36.5074$}&
 {$116.7495$ }&  { {$514.1132$}}\\
					&{ {\tt RES}}& {$4.6434e-07$}&
 {$9.0780e-07$}&
 {$3.8651e-07$}&
 {$3.8350e-07$}&
 {$4.3826e-07$}  &  {$2.4983e-07$} \\
 \midrule
					\multirow{3}{*}{ {\textit{SL}}}&  {IT}&  {$6$} & {$6$} & {$5$} &  {$5$} &  {$5$}&  {$4$}  \\
					&  {CPU}&  {$0.19624$}&
 {$1.0819$}&
 {$6.3741$}&
 {$30.6976$}&
 {$102.0981$}&  {$481.7656$}\\
					&{ {\tt RES}}& {$2.5612e-08$}&
 {$7.4194e-08$}&
 {$9.4761e-08$}&
 {$3.7303e-09$}&
 {$1.7982e-09$}  &  {$6.1568e-08$} \\
					\midrule
					& & & &\textbf{ Case I}& & &\\
					\midrule
					\multirow{3}{*}{\it SS}& IT& $4$& $4$& $4$& $4$& $4$&  {$4$}\\
					& CPU&$0.2415$&$1.3375$&$6.7059$&$33.6902$&$111.3118$&  {$439.1221$} \\
					&{\tt RES}&$7.7528e-08$&
$5.5120e-08$&
$4.5134e-08$&
$3.9157e-08$&
$3.5073e-08$&  {$2.7836e-08$} \\
					\midrule
					\multirow{3}{*}{\it RSS}& IT& $4$& $4$& $4$& $4$& $4$&  {$4$} \\
					& CPU&$0.2776$&
$1.0467$&
$6.5207$&
$32.4428$&
$110.4587$&  {$432.6755$} \\
					&{\tt RES}&$8.0898e-08$&
$6.0033e-08$&
$4.9839e-08$&
$4.3510e-08$&
$3.9097e-08$ &  {$3.1111e-08$} \\
					\midrule
					\multirow{3}{*}{\it EGSS}& IT& $4$& $4$& $4$& $4$& $4$ &  {$4$} \\
					& CPU&$0.3061$&
$1.2771$&
$6.9552$&
$32.8960$&
$119.7763$&  {$435.0198$} \\
					&{\tt RES}&$5.9583e-10$&
$4.4128e-10$&
$3.6626e-10$&
$3.2009e-10$&
$2.8807e-10$ &  {$1.9145e-08$} \\
\midrule
					\multirow{3}{*}{ {\it RPGSS}}&  {IT}&  {$4$} & {$4$} & {$4$} &  {$4$} &  {$4$} &  {$3$} \\
					&  {CPU}&  {$0.2249$}&
 {$1.1312$}&
 {$7.0322$}&
 {$35.0140$}&
 {$130.2593$} &  {$371.4522$} \\
					&{ {\tt RES}}& {$5.9497e-10$}&
 {$4.4042e-10$}&
 {$3.6494e-10$}&
 {$3.1838e-10$}&
 {$2.8603e-10$} &  {$9.9326e-07$} \\
					\midrule
					\multirow{3}{*}{\textit{PESS}$^{\dagger}$}& IT& $\textbf{2}$& $\textbf{2}$& $\textbf{2}$& $\textbf{2}$& $\textbf{2}$&  {$\bf{2}$}\\
					& CPU&$0.2285$&
$\bf 0.9854$&
$\bf 4.8795$&
$\bf 23.3687$&
$\bf 79.9355$ &  {$\bf 283.8561$} \\
					$s=12$ &\tt RES&$3.1630e-07$&
$2.3239e-07$&
$1.9486e-07$&
$1.7260e-07$&
$1.5754e-07$  &  {$1.3135e-07$}\\
					\midrule
					\multirow{3}{*}{\textit{LPESS}$^{\dagger}$}& IT& $\textbf{2}$& $\textbf{2}$& $\textbf{2}$& $\textbf{2}$& $\textbf{2}$ &  {$\bf 2$}\\
					& CPU&$0.2834$&
$\bf 0.7989$&
$\bf 4.5599$&
$\bf 21.1664$&
$\bf 72.8376$&  {$\bf 268.3114$} \\
					$s=12$ &{\tt RES}&$3.1180e-07$&
$2.2463e-07$&
$1.8481e-07$&
$1.6071e-07$&
$1.4411e-07$ &  {$1.1441e-07$} \\
					\midrule
					& & & & \textbf{Case II}&&  \\
					\midrule
					\multirow{3}{*}{\it SS}& IT& $7$& $7$& $7$& $7$& $7$ &  {$7$}\\
					& CPU&0.2780&
$1.6189$&
$10.6707$&
$54.4493$&
$184.6025$ &  {$703.1082$} \\
					&{\tt RES}& $6.7967e-07$&
$4.9738e-07$&
$4.1383e-07$&
$3.6189e-07$&
$3.2556e-07$ &  {$2.5956e-07$} \\
					\midrule
					\multirow{3}{*}{\it RSS}& IT& $7$& $7$& $7$& $7$& $7$&  {$7$}\\
					& CPU&$0.3248$&
$1.4644$&
$10.4985$&
$52.4559$&
$177.1895$&  {$690.1611$}\\
					&{\tt RES}&$5.2397e-07$&
$3.6832e-07$&
$2.7532e-07$&
$2.1869e-07$&
$1.8240e-07$ &  {$1.2720e-07$} \\
					\midrule
					\multirow{3}{*}{\it EGSS}& IT& $5$& $5$& $4$& $4$& $4$ &  {$4$}\\
					& CPU&$0.2588$&
$1.3792$&
$6.7546$&
$34.8358$&
$118.2298$ &  {$439.7907$}\\
					&{\tt RES}&$6.8491e-08$&
$1.5239e-07$&
$9.7099e-07$&
$6.3466e-07$&
$4.8159e-07$&  {$4.1376e-07$} \\
\midrule
					\multirow{3}{*}{ {\it RPGSS}}& IT&  {$4$} & {$4$} & {$4$} &  {$4$} &  {$4$} &  {$3$} \\
					&  {CPU}&  {$0.2445$}&
 {$1.2987$}&
 {$8.1982$}&
 {$28.5129$}&
 {$104.9127$} &  {$397.4766$} \\
					&{ {\tt RES}}& {$5.2642e-08$}&
 {$1.1366e-07$}&
 {$7.1116e-08$}&
 {$8.2780e-07$}&
 {$5.6396e-07$} &  {$2.5131e-07$} \\
					\midrule
					\multirow{3}{*}{\textit{PESS}$^{\dagger}$}& IT& $3$ & $3$& $3$& $3$& $3$ &  {$3$}\\
					& CPU &$0.2971$&
$1.1597$&
$6.6682$&
$31.2670$&
$103.8616$ &  {$371.1214$} \\
					$s=12$ &{\tt RES} & $7.4100e-08$&
$7.4642e-08$&
$7.3327e-08$&
$6.9803e-08$&
$6.1920e-08$ &  {$4.0967e-08$} \\
					\midrule
					\multirow{3}{*}{\textit{LPESS}$^{\dagger}$}& IT& $3$& $3$& $3$& $3$& $3$ &  {$3$}\\
					& CPU&$0.2226$&$0.9905$&$6.1339$&$30.0975$&$98.6443$&  {$359.8141$} \\
					$s=12$ &{\tt RES}& $1.1683e-09$&$2.3215e-09$&$3.1540e-09$&$3.3907e-09$&$3.0271e-09$ &  {$1.6141e-09$} \\
					\bottomrule
					\multicolumn{8}{l}{Here ${\dagger}$ represents the proposed preconditioners. The boldface represents the top two results.}\\
					\multicolumn{8}{l}{ $\bf{--}$ indicates that the iteration process does not converge within the prescribed IT.}
				\end{tabular}
			}
		\end{table} 
 {\textbf{Problem formulation:}}  We consider the three-by-three block {\it SPP} (\ref{eq11}) taken from \cite{HuangNA} with
  
			\noin $A= \bmatrix{I\otimes G+G\otimes I &0\\ 0&I\otimes G+ G\otimes I}\in \R^{2l^2\times2l^2}, \,\, B=\bmatrix{I\otimes F& F\otimes I}\in \R^{l^2\times 2l^2}, $
	   $C= E\otimes F\in \R^{l^2\times l^2},$
		where $G=\frac{1}{(l+1)^2}\, \mathrm{tridiag}(-1,2,-1)\in \R^{l\times l},\quad F=\frac{1}{l+1}\,  \mathrm{tridiag}(0,1,-1)\in \R^{l\times l}$  and  $ E=\diag(1, l+1, \ldots, l^2-l+ 1)\in \R^{l\times l}.$ Here, $E\otimes F$ denotes the Kronecker product of two matrices $E$ and $F$ and $ \mathrm{tridiag}(a,b,c)\in \R^{l\times l}$ denotes the 
 $l \times l$ tridiagonal matrix with  {diagonal entries $b$, subdiagonal entries $a$, and superdiagonal entries $c$.} For this problem, the size of the matrix $\A$ is $4l^2.$ 
 
 \vspace{2mm}
\noindent {\textbf{Parameter selection:}} Following \cite{HuangNA}, selection of $\widehat{A}$ and $\widehat{S}$  ({\it SPD} approximations of $A$ and $S,$ respectively) for {\it IBD} preconditioner   are done as follows:
		\begin{equation*}
			\widehat{A}=LL^T, \quad \widehat{S}=\diag(B\widehat{A}^{-1}B^T),
		\end{equation*}
		where  $L$ is the incomplete Cholesky factor of $A$ produced by the Matlab function: 
		$$ \textit{\textbf{ichol(A, struct(`type', `ict', `droptol', \text{1e-8}, `michol', `off'))}}.$$
   {For the preconditioner $\P_{\textit{MAPSS}},$ we take $\alpha=\sqrt[4]{\frac{tr(BB^TC^TC)}{m}}$ and $\beta= 10^{-4}$ as in \cite{MAPSS2023}.}
  We consider the parameter choices for the preconditioners $\P_{SS},\P_{RSS},\P_{EGSS},$ $  {\P_{RPGSS}},$ $ \P_{PESS}$ and $\P_{LPESS}$ in  {the following} two cases.
		\begin{itemize}
			\item In Case I: $\alpha=0.1$  for $\P_{SS}$  and $\P_{RSS};$  $\alpha=0.1, \beta=1,\gamma=0.001$ and  $P=I,Q=I,W=I$ for $\P_{EGSS}$ and  {$\P_{RPGSS}$}; and  $\Lambda_1= I,\Lambda_2= I,\Lambda_3= 0.001 I$ for $\P_{PESS}$ and $\P_{LPESS}.$
			\item In Case II:  $\alpha=1$ for $\P_{SS}$  and $\P_{RSS};$ $ \alpha=1, \beta=1, \gamma=0.001$ and $P=A,Q=I,W=CC^T$ for $\P_{EGSS}$  and  {$\P_{RPGSS}$}; and $\Lambda_1=A,\Lambda_2= I,\Lambda_3=0.001 CC^T$  for $\P_{PESS}$ and $\P_{LPESS}.$ 
		\end{itemize} 
	The parameter selection in Case II is made as in \cite{EGSS23}.	
 
 \vspace{2mm}
\noindent  {\textbf{Results for experimentally found optimal parameter:}} The optimal value (experimentally) for  $s$ in the range  $[10,20]$ is determined to be $12$   for minimal CPU times. 
  The test problems generated for the values of $l=16,32,48,64,80,  {128},$ and their numerical results are reported in  Table \ref{tab2}. 

   \begin{table}[ht!]
       \centering
         \caption{  Numerical results of \textit{PESS-I}, \textit{LPESS-I}, \textit{PESS-II} and \textit{LPESS-II} \textit{PGMRES} processes for Example \ref{ex1}.}
          {\resizebox{15cm}{!}{
         \begin{tabular}{cccccccc}
						\toprule
						Process& $l$	& $16$ &$32$& $48$ & $64$ &  {$80$}& $128$\\
						\midrule 
						& size($\A$)	& $1024$ &$4096$& $9216$ & $16384$& {$25600$}& $65536$\\
						\midrule
						\multirow{1}{*}{{\it PESS-I}}& IT& $2$&$2$&$2$&$2$& $2$& $2$\\
				($s=1,$ $\Lambda_1=0.01I,$		& CPU&  $0.2517$&$0.8373$&$4.6626$&$21.8133$ & {$77.9420$}&$308.1418$\\
		$\Lambda_2=0.1I,$ $\Lambda_3=0.001I$)				&{\tt RES}&$4.4970e-07$&$3.8763e-07$&$3.6261e-07$&$3.4880e-07$&$3.3983e-07$&$7.7830e-07$\\	
						\midrule
      \multirow{1}{*}{{\it LPESS-I}}& IT& $2$&$2$&$2$&$2$& $2$& $2$\\
			($s=1,$ $\Lambda_2=0.1I,$			& CPU&  $0.3070$&$0.8184$&$4.7244$&$21.6534$ & {$82.3687$}&$288.9701$\\
   $\Lambda_3=0.001I$ )  &{\tt RES}&$3.1116e-07$&$2.2410e-07$&$1.8435e-07$&$1.6030e-07$&$1.4375e-07$&$1.1441e-07$\\
      \midrule
      \multirow{1}{*}{{\it PESS-II}}& IT& $3$&$3$&$3$&$3$& $3$& $3$\\
				($s=s_{est},$ $\Lambda_1=A,$		& CPU& $0.20373$&$1.05327$&$5.84150$ & $29.3914$& $104.9663$&$424.2004336$\\ 
$\Lambda_2=\beta_{est}I,$  $\Lambda_3= 10^{-4}CC^T$)   &{\tt RES}&  $2.2013e-08$&$3.4850e-08$&$4.3689e-08$&$5.0705e-08$&$5.8005e-08$& $7.2906e-08$\\
      \midrule
      \multirow{1}{*}{{\it LPESS-II}}& IT& $3$&$3$&$3$&$3$& $3$& $3$\\
				($s=s_{est},$ $\Lambda_2=\beta_{est}I,$		& CPU&  $0.19215$&$1.04964$&$4.63143$&$24.20924$ & {$102.84651$}&$389.10761$\\ 
  $\Lambda_3= 10^{-4}CC^T$)   &{\tt RES}&   $1.8126e-09$& $1.0207e-13$&$8.0620e-15$&$9.1601e-14$&$5.7292e-13$&$1.1950e-07$\\
      \midrule
       \end{tabular}}}
       \label{tab:rev1}
   \end{table}
  \vspace{2mm}
\noindent   {\textbf{Results using parameters selection strategy in Section \ref{Sec:parameter}:} To demonstrate the effectiveness of the proposed preconditioners \textit{PESS} and \textit{LPESS} using the parameters discussed in Section \ref{Sec:parameter}, we present the numerical  results by choosing $s=1,$ $\Lambda_1=0.01I, \Lambda_2=0.1I, \Lambda_3=0.001I$ (denoted by \textit{PESS-I} and \textit{LPESS-I}) and $s=s_{est},$ $\Lambda_1=A,$ $\Lambda_2=\beta_{est}I$ and $\Lambda_3=10^{-4}C^TC$ (denoted by \textit{PESS-II} and \textit{LPESS-II}) for the proposed preconditioner. These results are summarized in Table  \ref{tab:rev1}. } 
   
 \vspace{2mm}
\noindent  {\textbf{Convergence curves:}}  Figure \ref{fig1} illustrates convergence curves pertaining to preconditioners {\it BD}, {\it IBD},  {{\it MAPSS}, \textit{SL},} {\it SS}, {\it RSS}, {\it EGSS},  {\textit{RPGSS},} \textit{PESS} and \textit{LPESS} ($s= 12$) for Case II  with $l=16, 32, 48,64, 80$ and  {$128$}. These curves depict the relationship between the relative residue at each iteration step (RES) and IT counts.
  
 \vspace{2mm}
\noindent  {\textbf{Spectral distributions:}} To further illustrate the superiority of the \textit{PESS}  preconditioner, spectral distributions of  $\A$ and  the preconditioned matrices $\P_{BD}^{-1}\A, \P_{IBD}^{-1}\A,$ $  {\P_{MAPSS}^{-1}\A},$ $ {\P_{SL}^{-1}\A},$ $\P_{SS}^{-1}\A,$ $ \P_{RSS}^{-1}\A,$ $ \P_{EGSS}^{-1}\A,$ $ {\P_{RPGSS}^{-1}\A }, \P_{PESS}^{-1}\A$  and $\P_{LPESS}^{-1}\A$ for the Case II with $l=16$ and $s=13$ are displayed in Figure \ref{fig2}.
			\begin{figure}[ht!]
				\centering
				\begin{subfigure}[b]{0.3\textwidth}
					\centering
					\includegraphics[width=\textwidth]{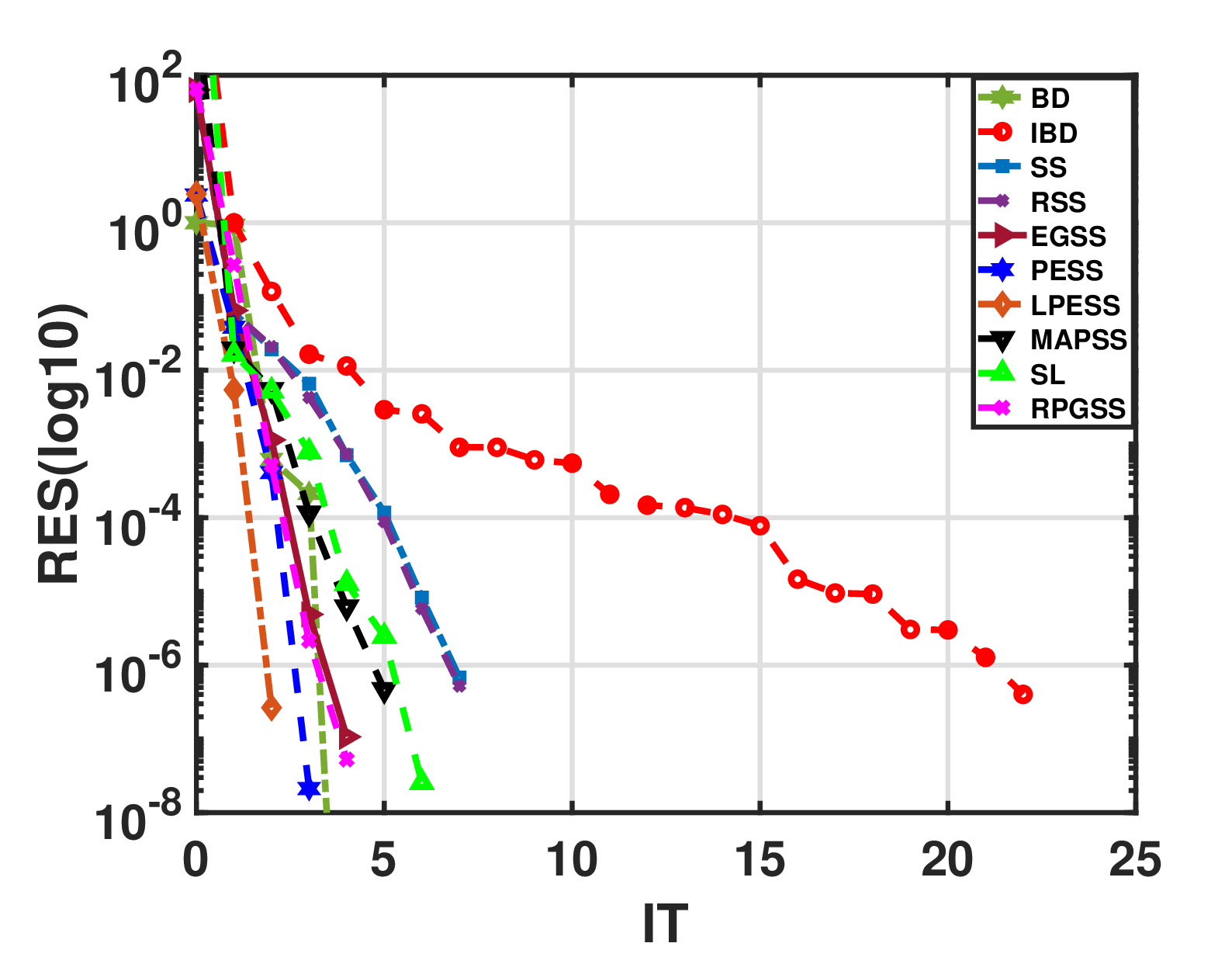}
					\caption{ $l=16$}
					\label{fig:RES16}
				\end{subfigure}
				\begin{subfigure}[b]{0.3\textwidth}
					\centering
					\includegraphics[width=\textwidth]{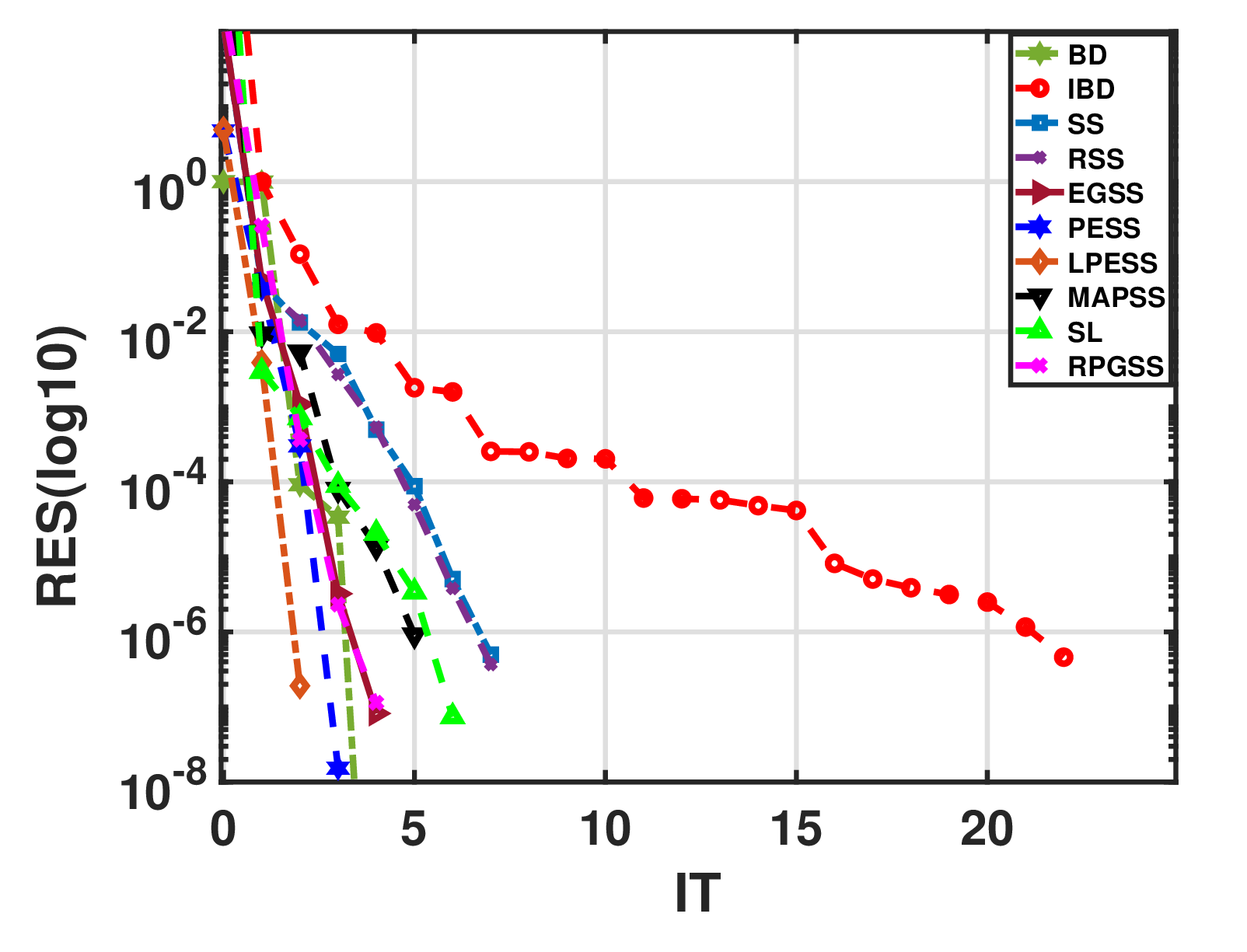}
					\caption{ $l=32$}
					\label{fig:RES32}
				\end{subfigure}
				\begin{subfigure}[b]{0.3\textwidth}
					\centering
					\includegraphics[width=\textwidth]{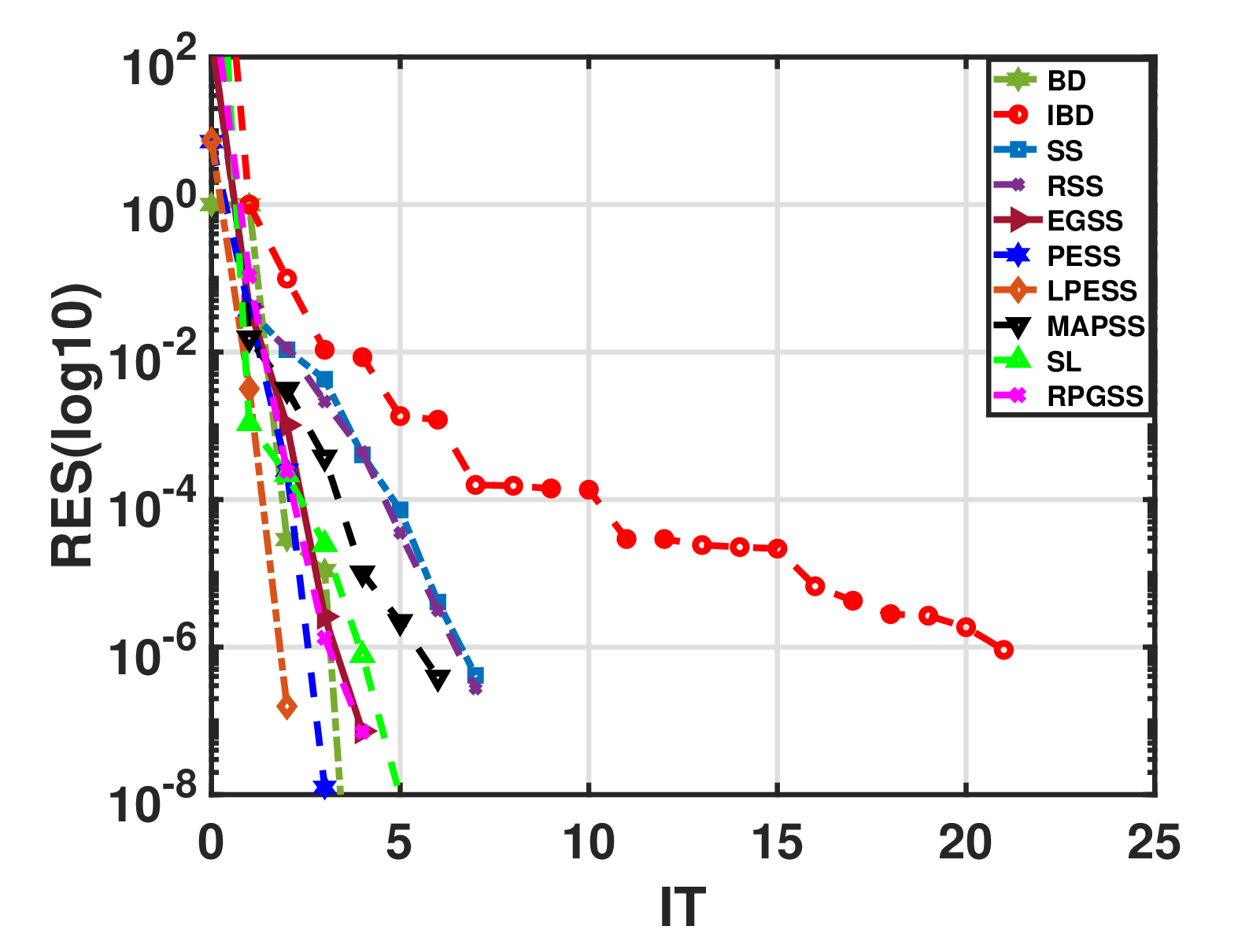}
					\caption{ $l=48$}
					\label{fig:RES48}
				\end{subfigure}\\
				\begin{subfigure}[b]{0.3\textwidth}
					\centering
					\includegraphics[width=\textwidth]{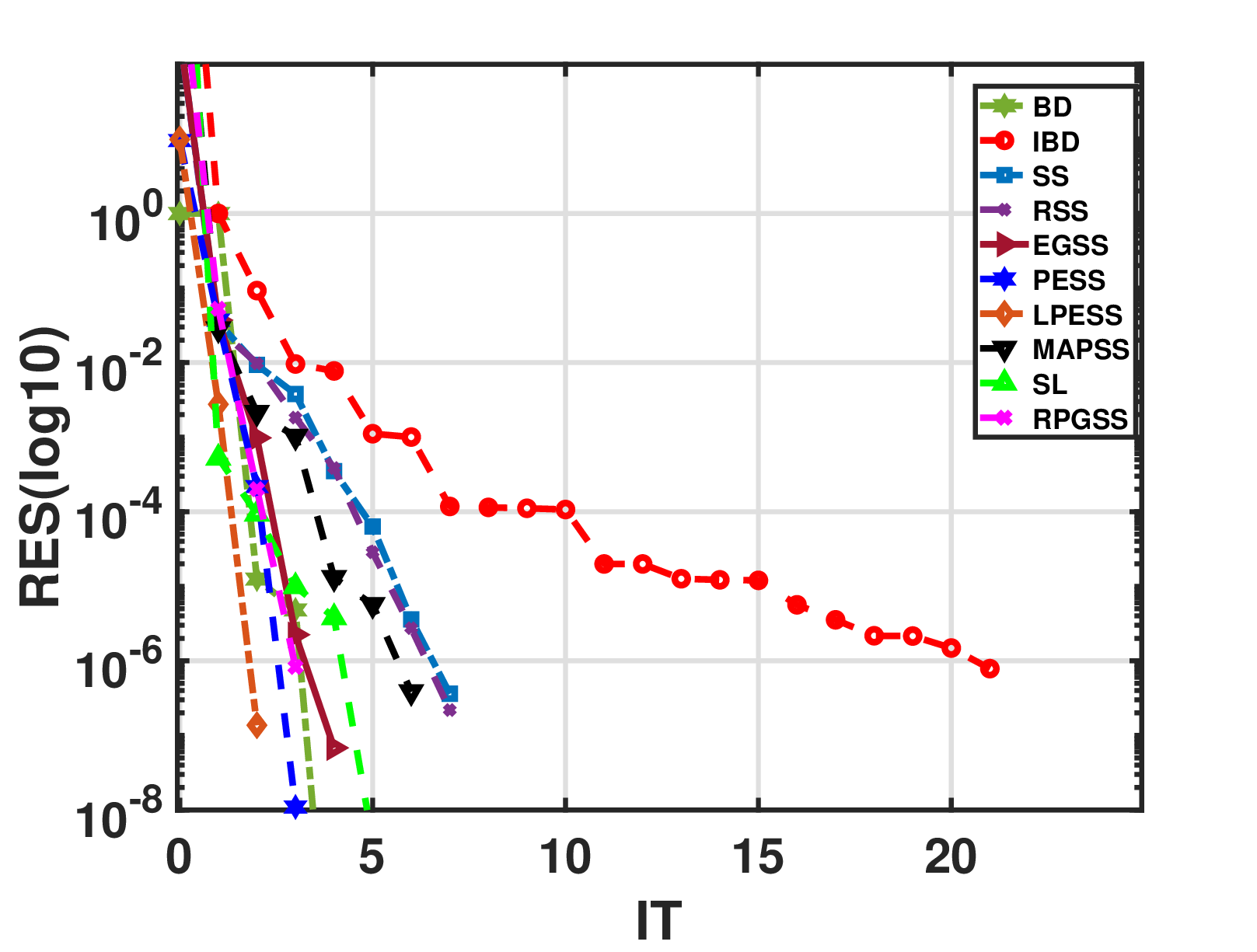}
					\caption{ $l=64$}
					\label{fig:RES64}
				\end{subfigure}
				\begin{subfigure}[b]{0.3\textwidth}
					\centering
					\includegraphics[width=\textwidth]{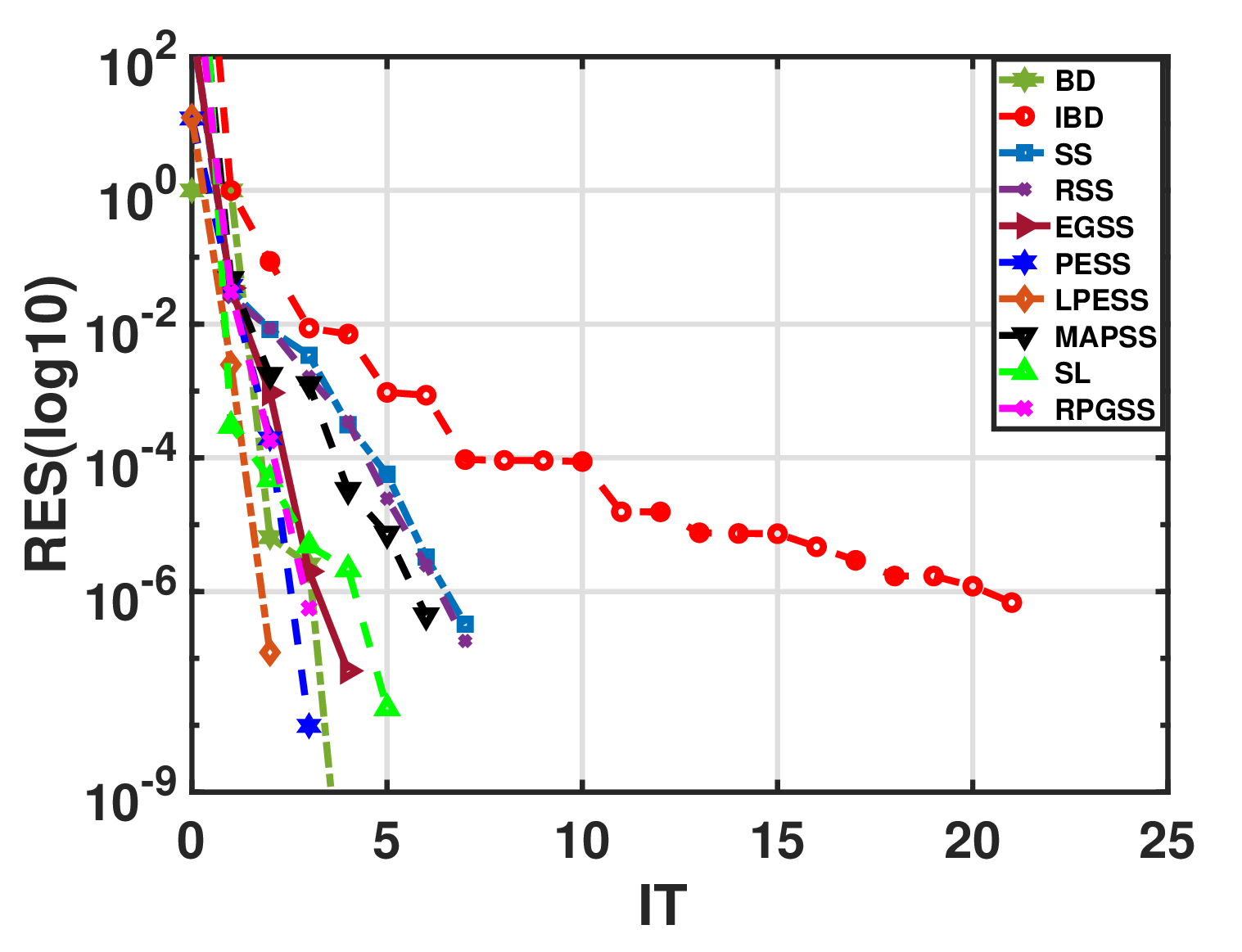}
					\caption{ $l=80$ }
					\label{fig:RES80}
				\end{subfigure}
    \begin{subfigure}[b]{0.34\textwidth}
					\centering
					\includegraphics[width=\textwidth]{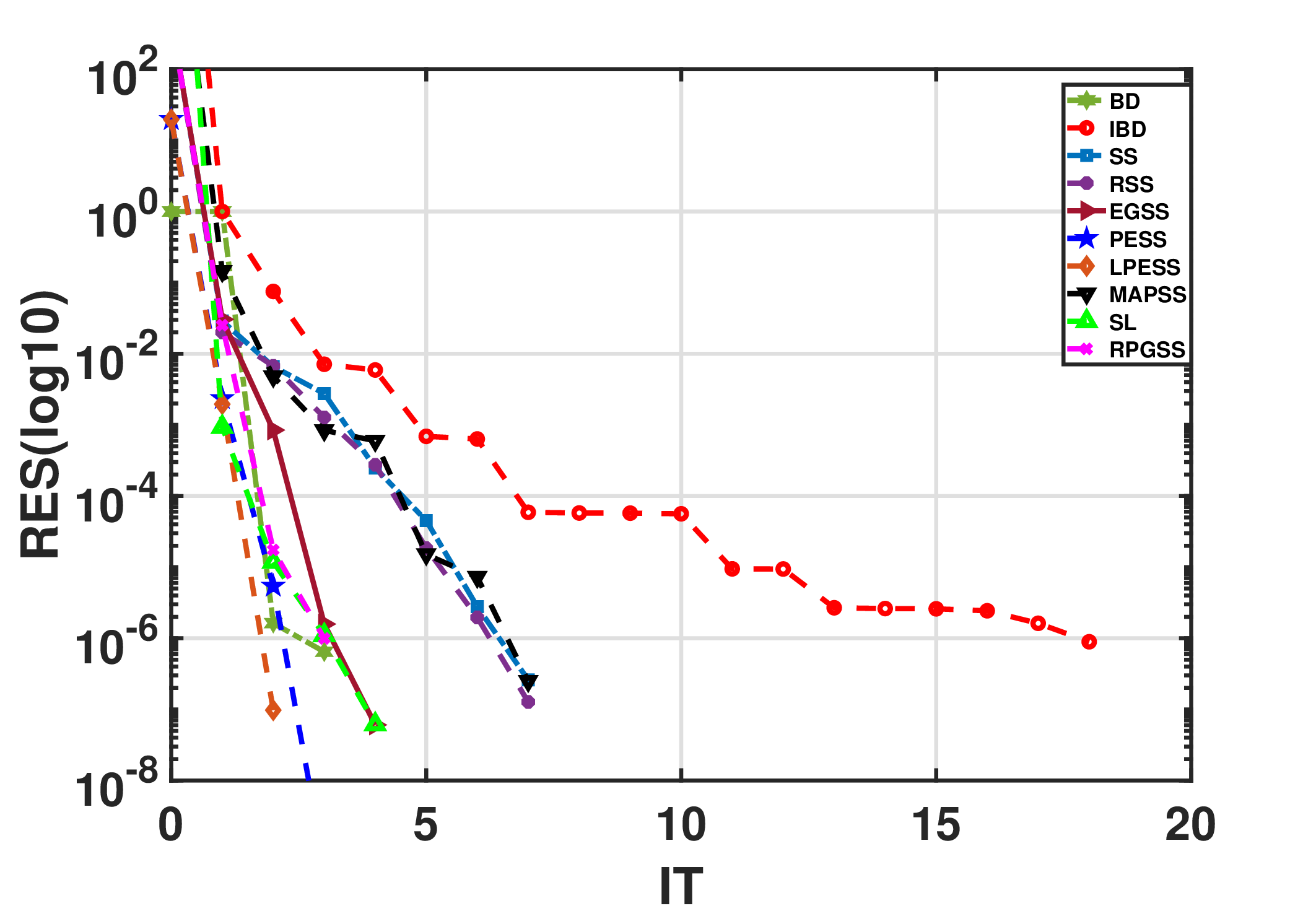}
					\caption{  $l=128$}
					\label{fig:RES128}
				\end{subfigure}
				\caption{Convergence curves for IT versus RES of {\it PGMRES} processes employing {\it BD, IBD,  {MAPSS, SL,} SS, RSS, EGSS,  {RPGSS,}  PESS} and \textit{LPESS} $(s=12)$  preconditioners in Case II for Example \ref{ex1}.}
				\label{fig1}
			\end{figure} 

			\begin{figure}[ht!]
				\centering
				\begin{subfigure}[b]{0.3\textwidth}
					\centering
					\includegraphics[width=\textwidth]{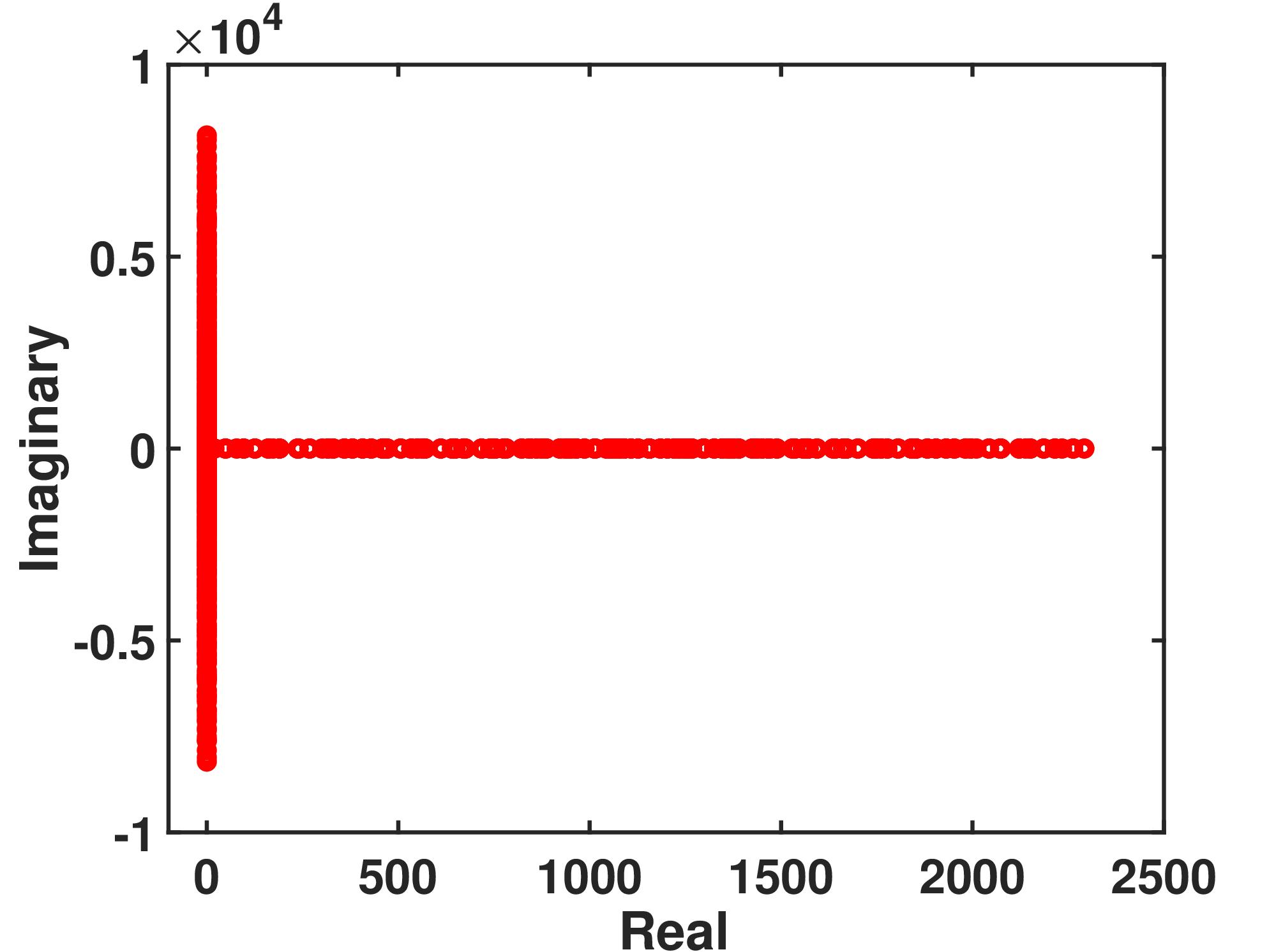}
					\caption{ $\A$ }
					\label{fig:original}
				\end{subfigure}
				\begin{subfigure}[b]{0.3\textwidth}
					\centering
					\includegraphics[width=\textwidth]{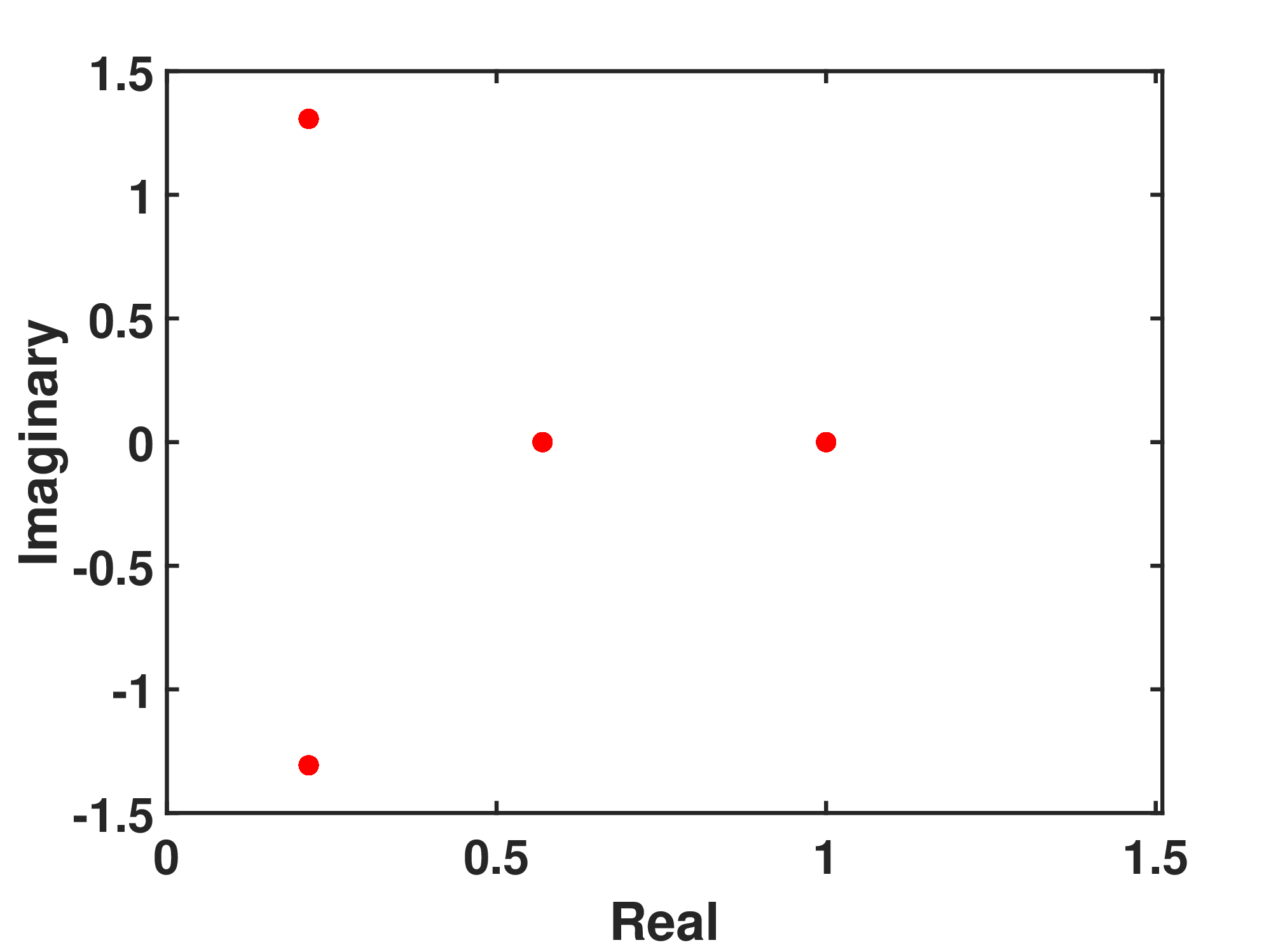}
					\caption{ $\P_{BD}^{-1}\A$}
					\label{fig:BD}
				\end{subfigure}
				\begin{subfigure}[b]{0.3\textwidth}
					\centering
					\includegraphics[width=\textwidth]{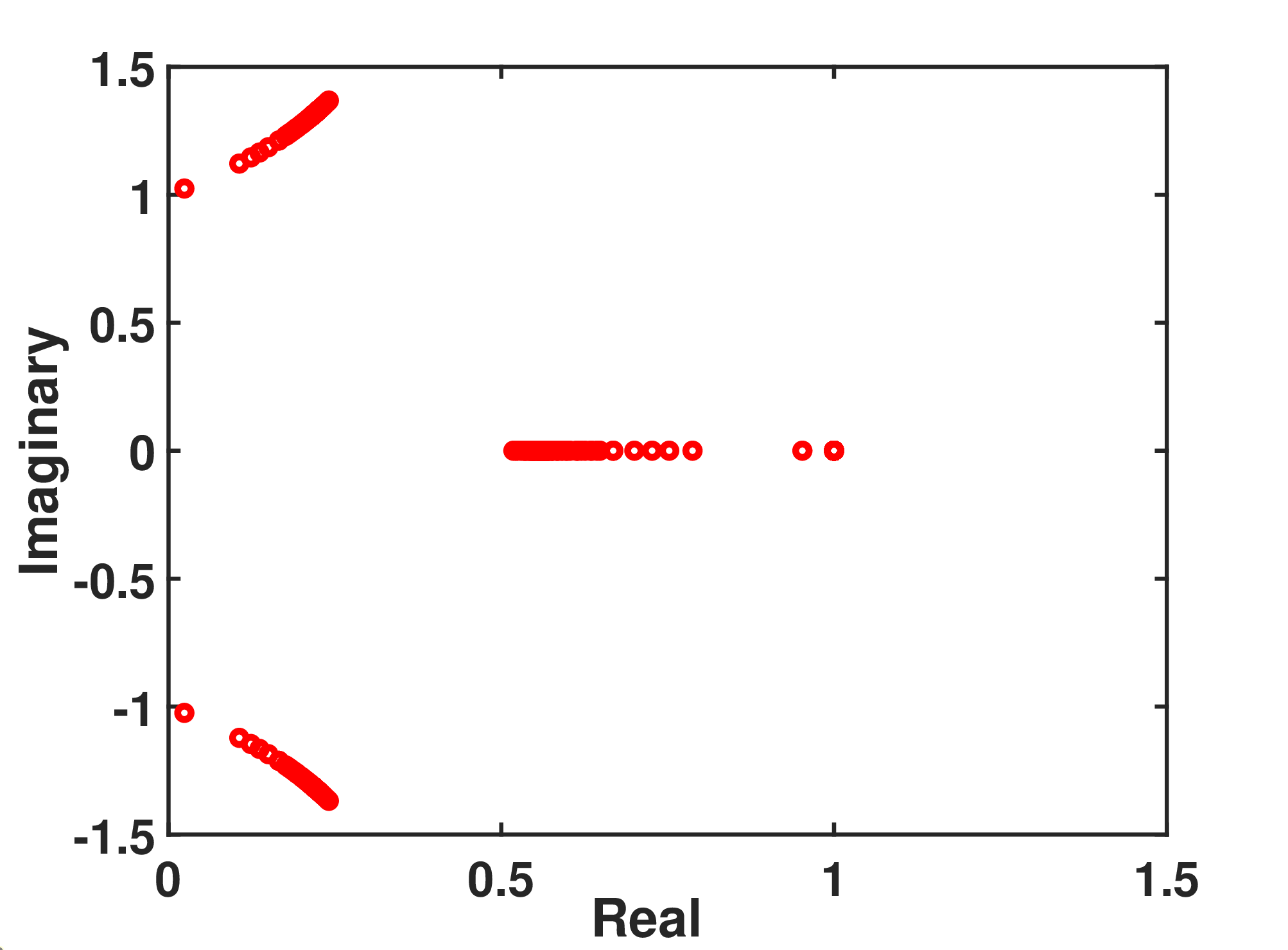}
					\caption{ $\P_{IBD}^{-1}\A$ }
					\label{fig:InBD}
				\end{subfigure}
    \begin{subfigure}[b]{0.3\textwidth}
					\centering
					\includegraphics[width=\textwidth]{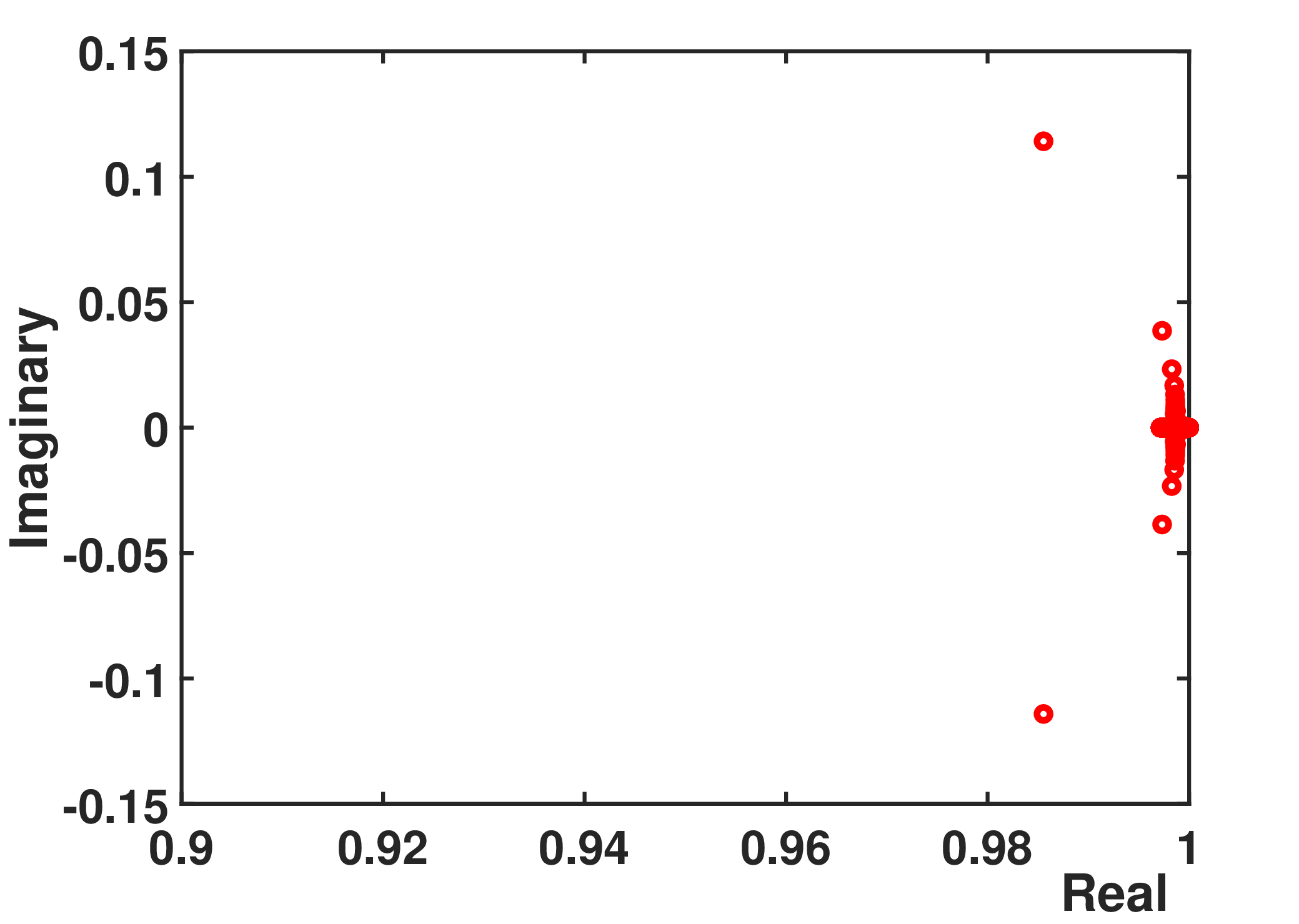}
					\caption{  $\P_{MAPSS}^{-1}\A$ }
					\label{fig:MAPSS}
				\end{subfigure}
     \begin{subfigure}[b]{0.3\textwidth}
					\centering
					\includegraphics[width=\textwidth]{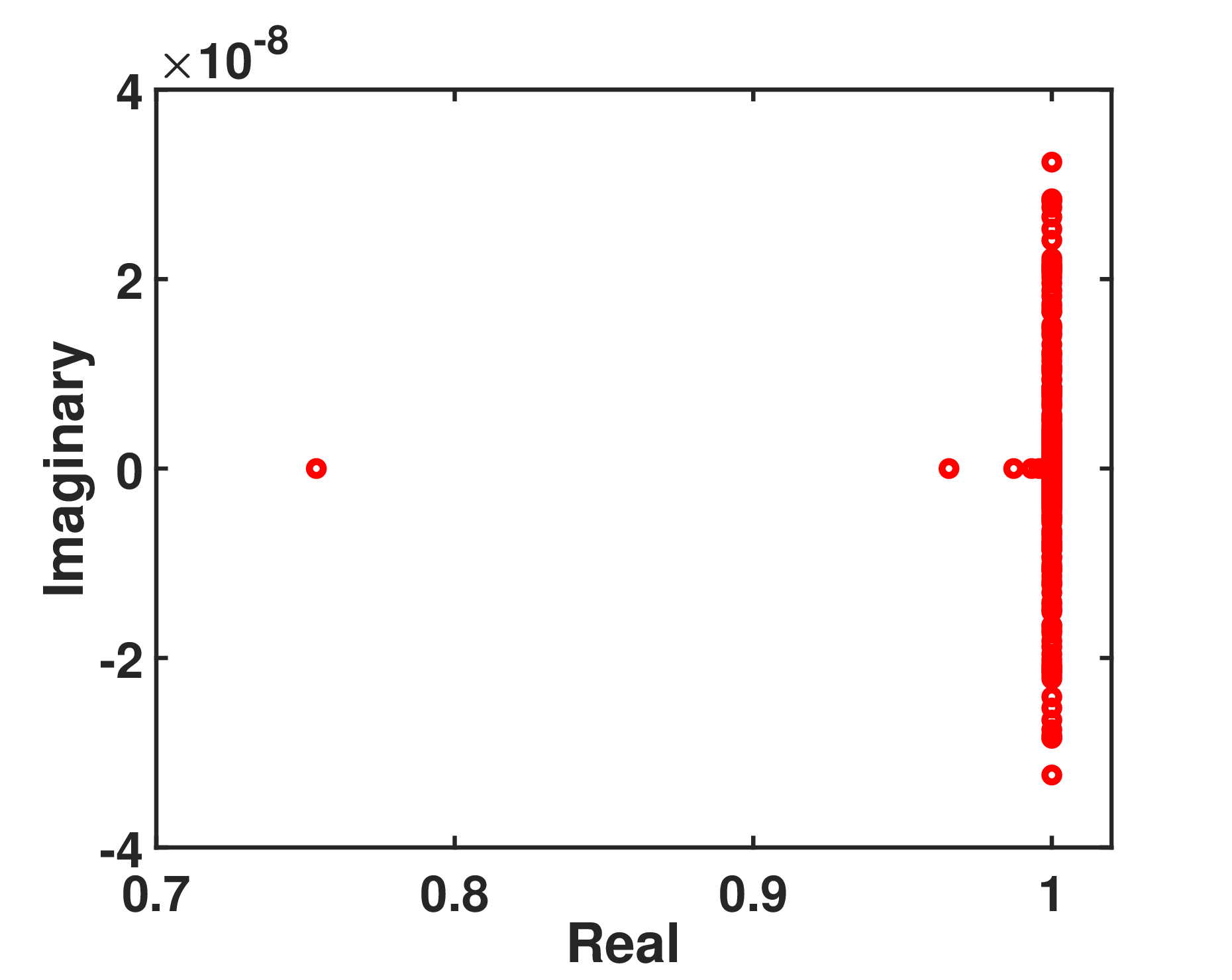}
					\caption{  $\P_{SL}^{-1}\A$ }
					\label{fig:TBDP}
				\end{subfigure}
				\begin{subfigure}[b]{0.31\textwidth}
					\centering
					\includegraphics[width=\textwidth]{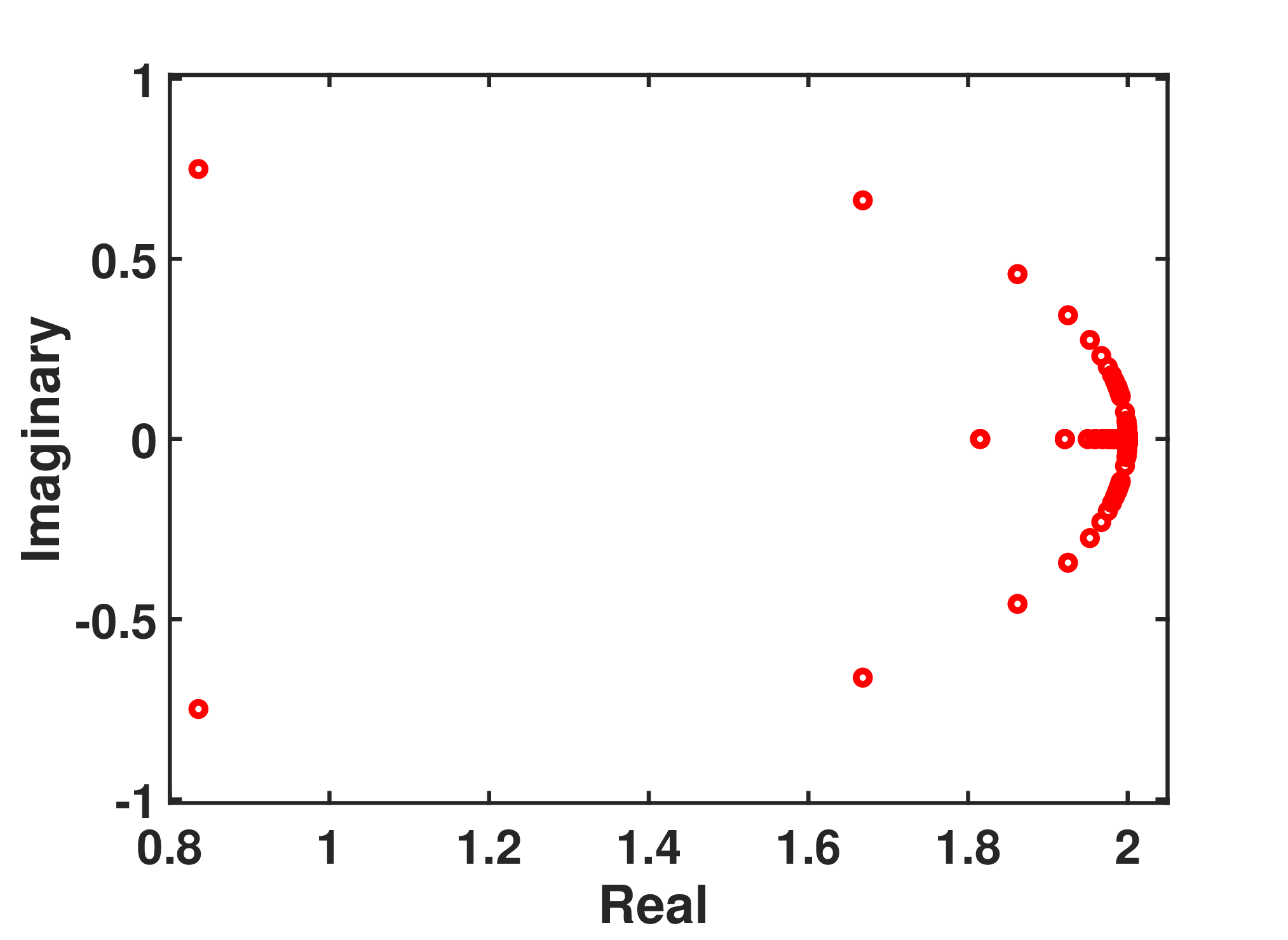}
					\caption{ $\P_{SS}^{-1}\A$ }
					\label{fig:SS}
				\end{subfigure}
				\begin{subfigure}[b]{0.315\textwidth}
					\centering
					\includegraphics[width=\textwidth]{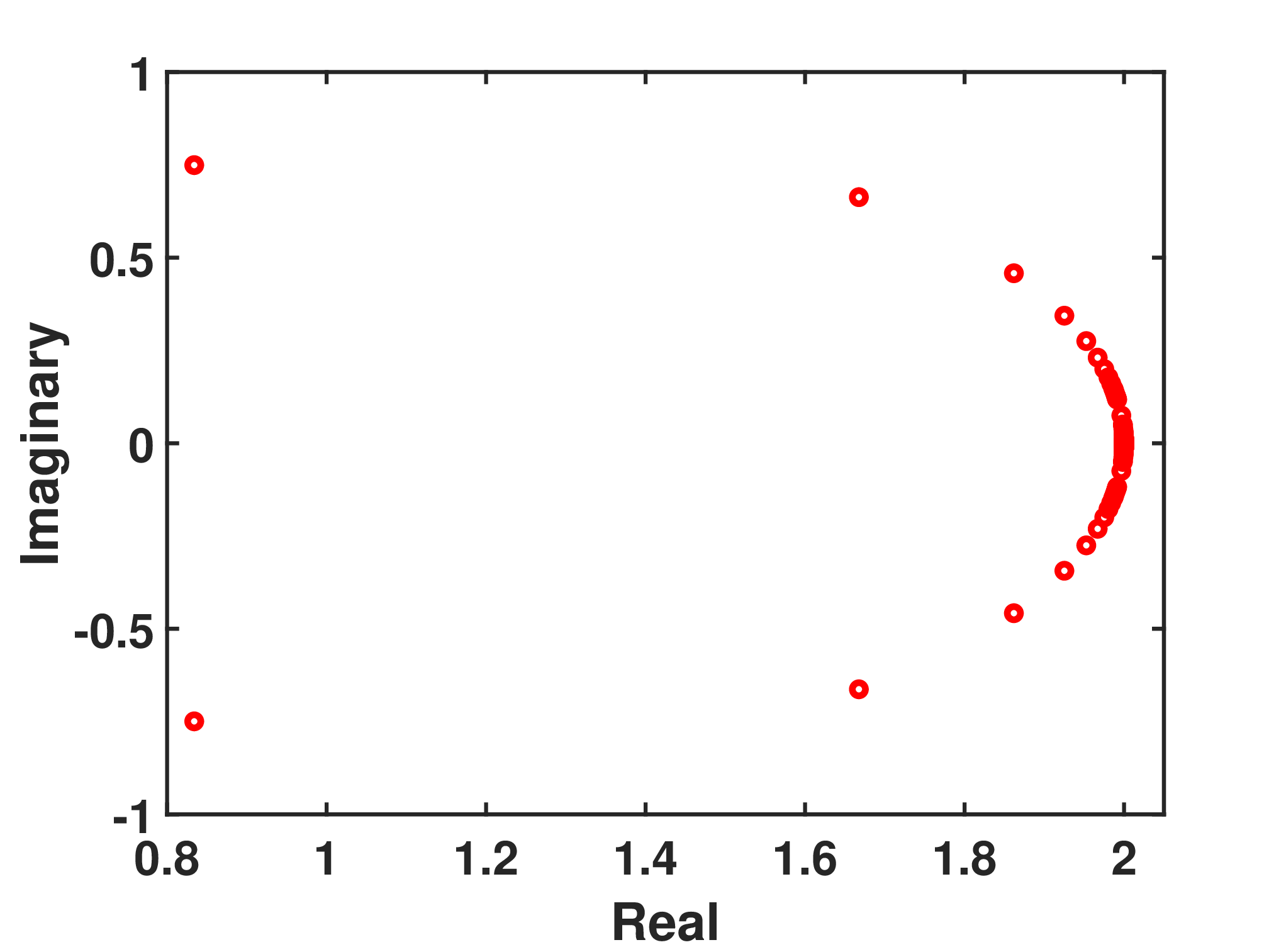}
					\caption{ $\P_{RSS}^{-1}\A$ }
					\label{fig:Rss}
				\end{subfigure}
				\begin{subfigure}[b]{0.3\textwidth}
					\centering
					\includegraphics[width=\textwidth]{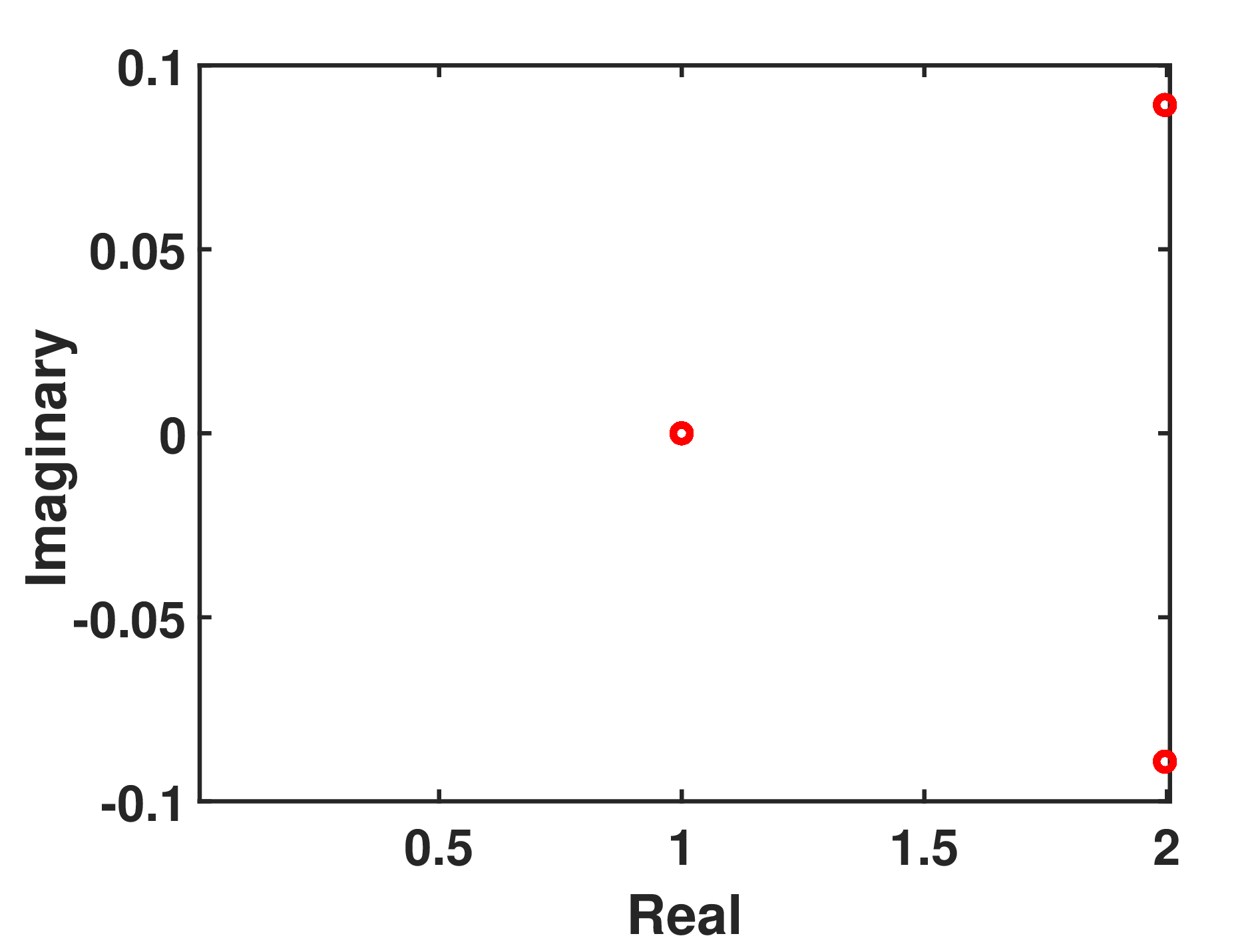}
					\caption{ $\P_{EGSS}^{-1}\A$ }
					\label{fig:EGSS}
				\end{subfigure}
    \begin{subfigure}[b]{0.3\textwidth}
					\centering
					\includegraphics[width=\textwidth]{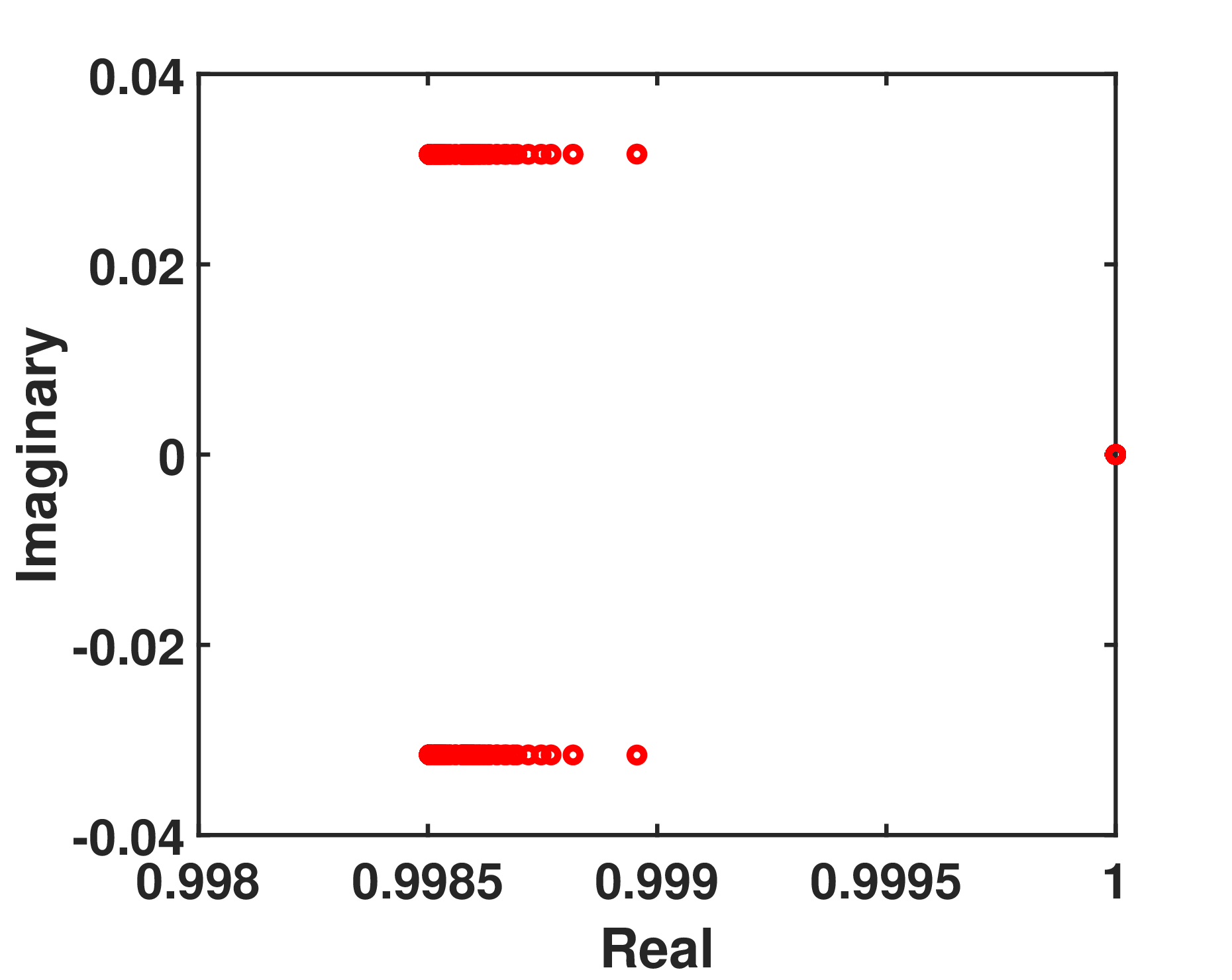}
					\caption{  $\P_{RPGSS}^{-1}\A$ }
					\label{fig:RPGSS1}
				\end{subfigure}
				\begin{subfigure}[b]{0.3\textwidth}
					\centering
					 \includegraphics[width=\textwidth]{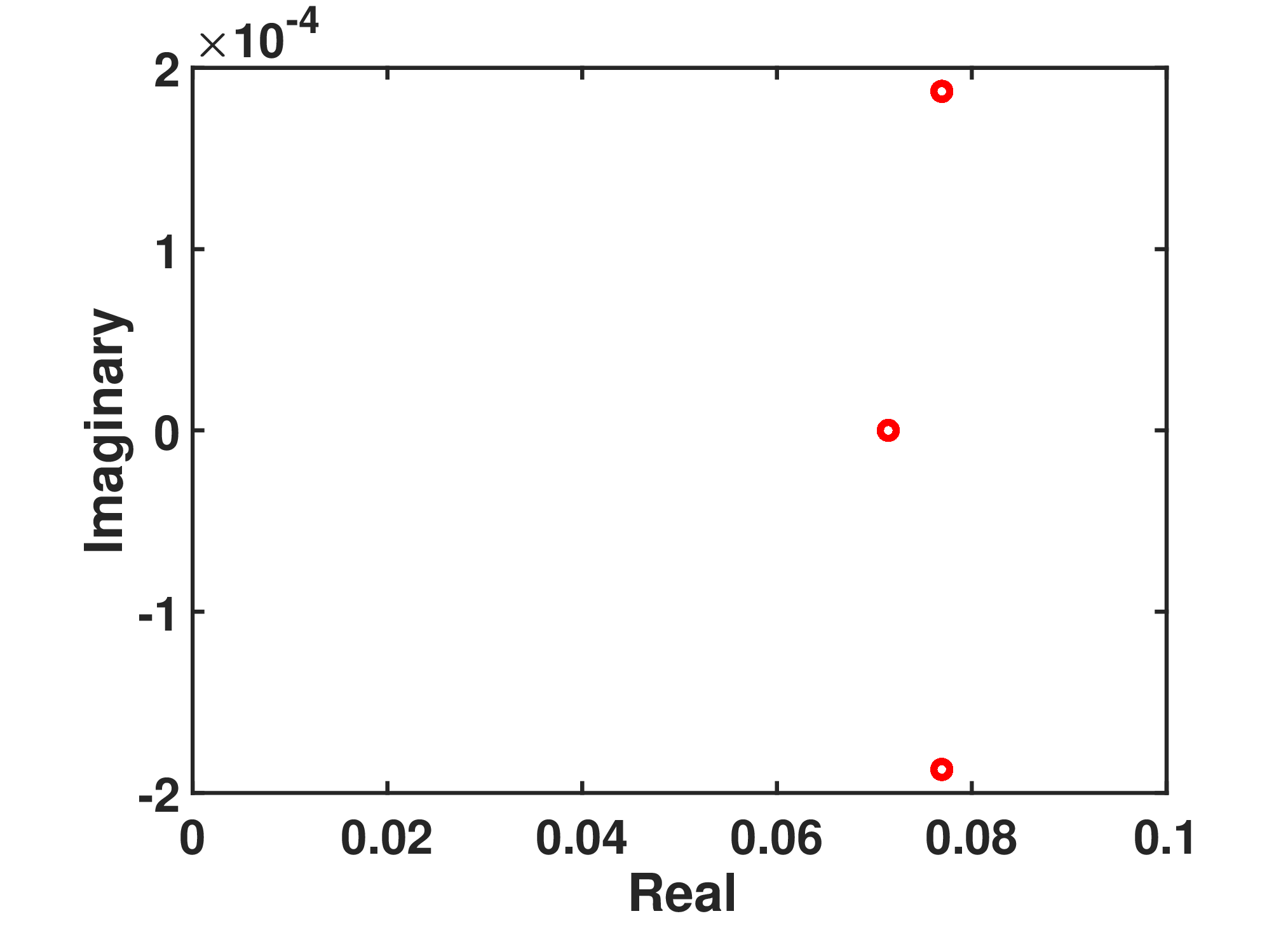}
					\caption{$\P_{PESS}^{-1}\A$ with $s=13$}
					\label{fig:pess}
				\end{subfigure}
    \begin{subfigure}[b]{0.3\textwidth}
					\centering
     \includegraphics[width=\textwidth]{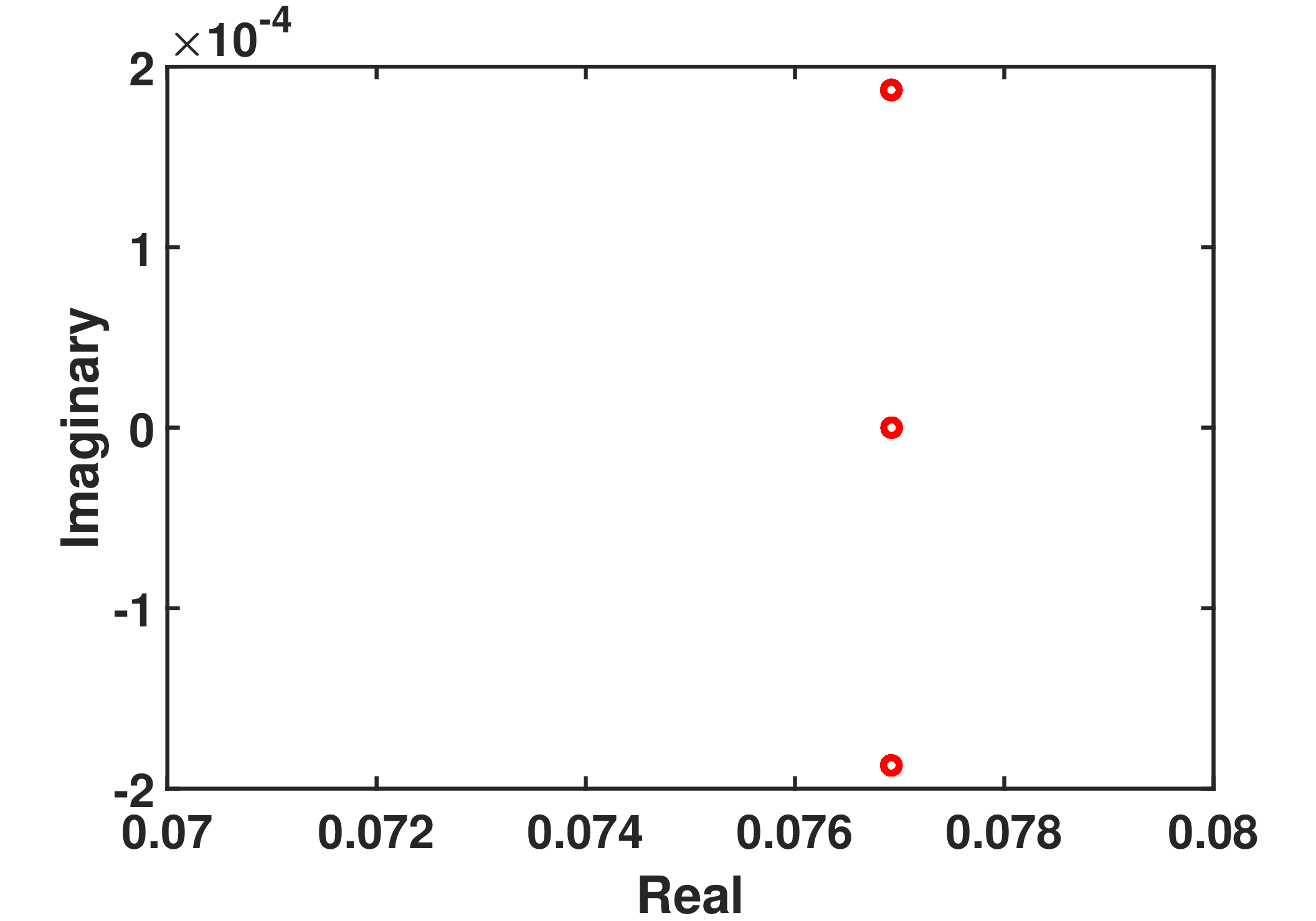}
					\caption{$\P_{LPESS}^{-1}\A$ with $s=13$}
					\label{fig:Lpess}
				\end{subfigure}
				\caption{Spectral distributions of $\A,\P_{BD}^{-1}\A, \P_{IBD}^{-1}\A,  {\P_{MAPSS}^{-1}\A, \P_{SL}^{-1}\A,} \P_{SS}^{-1}\A, \P_{EGSS}^{-1}\A,  {\P_{RPGSS}^{-1}\A,} \P_{PESS}^{-1}\A$ and $\P_{LPESS}^{-1}\A$ for Case II with $l=16$ for Example \ref{ex1}.}
				\label{fig2}
			\end{figure}

			\begin{figure}[ht!]
				\centering
				\begin{subfigure}[b]{0.4\textwidth}
					\centering
     \includegraphics[width=\textwidth]{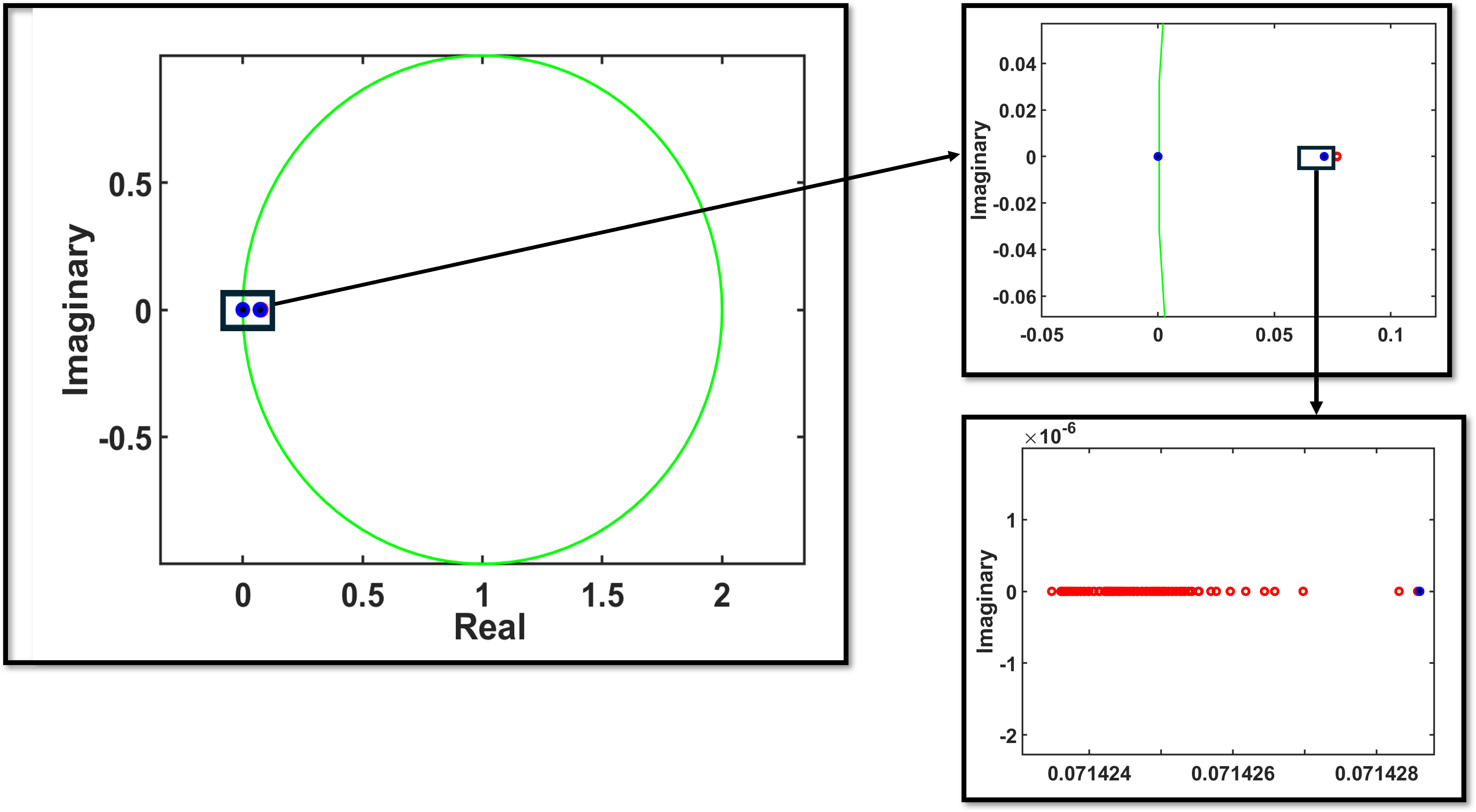}
					\caption{  $\P_{PESS}^{-1}\A$ with $s=13$}
					\label{fig:pess_bounds}
				\end{subfigure}
    \hfil
    \begin{subfigure}[b]{0.53\textwidth}
					\centering
					\includegraphics[width=\textwidth]{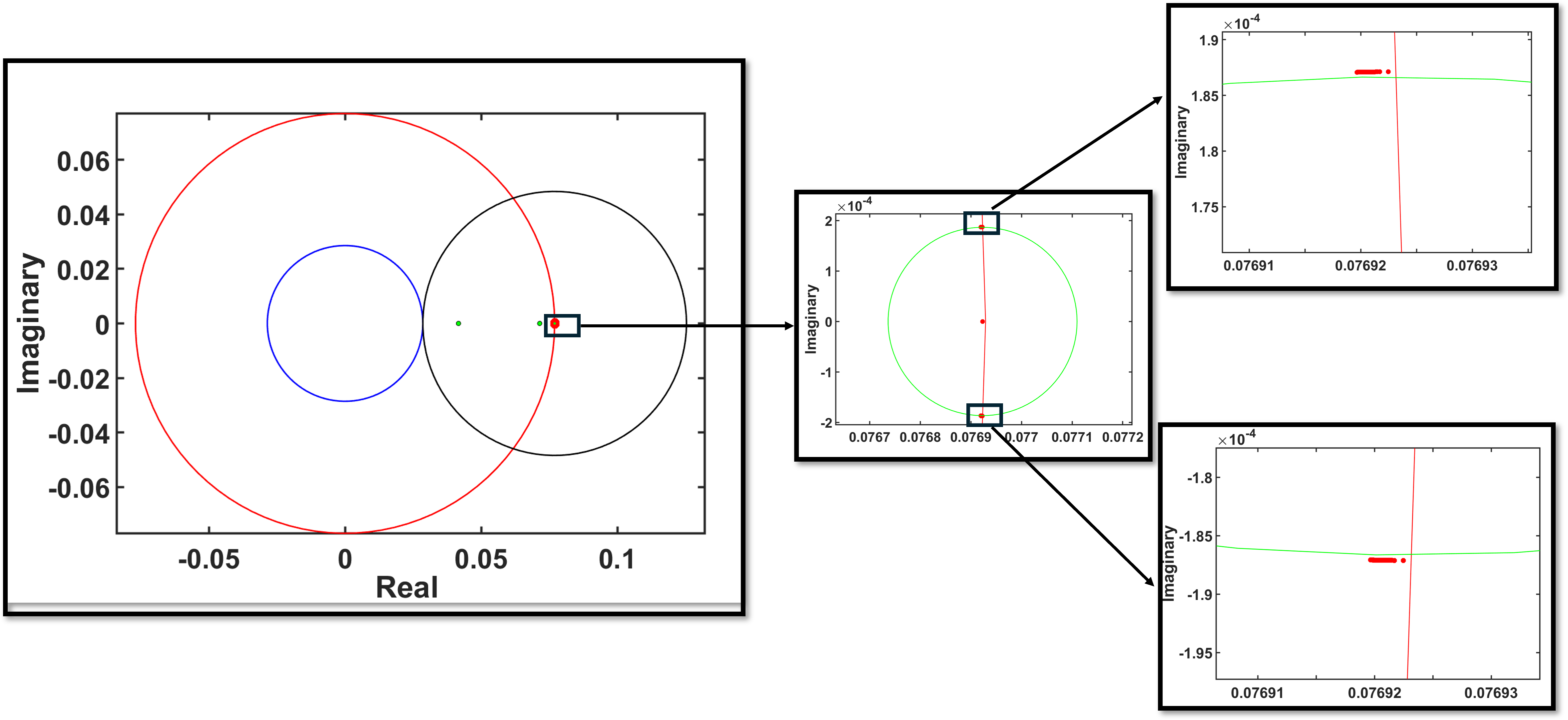}
					\caption{  $\P_{LPESS}^{-1}\A$ with $s=13$}
					\label{fig:Lpess_bounds}
				\end{subfigure}
				\caption{ {Spectral bounds for $\P_{PESS}^{-1}\A$ and $\P_{LPESS}^{-1}\A$ for Case II with $l=16$ for Example \ref{ex1}.}}
				\label{fig:eigenbounds}
			\end{figure}

   %
 \vspace{2mm}
\noindent {\textbf{Spectral bounds:} Furthermore, we draw the eigenvalue bounds of Theorems \ref{th41} and \ref{th42} for the \textit{PESS} preconditioned matrix. The $|\lambda-1|=1$ of Theorem \ref{th41} is drawn by the green unit circle in Figure \ref{fig:eigenbounds}(a) and the points in blue color are the bounds of Theorem \ref{th42}. 
To draw the eigenvalues bounds in Theorem \ref{theorem:RPESS} for \textit{LPESS} preconditioned matrix, we define the following circles:
\begin{align*}
    &C_1:=\left\{\lambda\in \mathbb{C}: |\lambda|=  \sqrt{\frac{\tilde{\theta}_{\max}}{1+s\vartheta_{\min}+s^2\tilde{\theta}_{\max}}}\right\}, ~  C_2:=\left\{\lambda\in \mathbb{C}: |\lambda|= \frac{\vartheta_{\min}}{2+s\vartheta_{\min}} \right\},\\
    & C_3:=\left\{\lambda\in \mathbb{C}: \left|\lambda-\frac{1}{s}\right|= \frac{\vartheta_{\min}}{2+s\vartheta_{\min}} \right\}~\text{and}~ C_4:=\left\{\lambda\in \mathbb{C}: \left|\lambda-\frac{1}{s}\right|= \frac{1}{s(1+s\sqrt{\tilde{\theta}_{\max}})} \right\}.
\end{align*}
 In Figure \ref{fig:eigenbounds}(b), $C_1$ is in red color, $C_2$ is in blue color, $C_3$ is in black color and $C_4$ is in green color. Additionally, the eigenvalue bounds in Theorems \ref{th42}, \ref{th43} and \ref{theorem:RPESS} are also presented in Table \ref{tab:eigbounds_EX1}.}
	  \begin{table}[ht!]
         \centering
        \caption{ Spectral bounds for  $\P_{PESS}^{-1}\A$ and $\P_{LPESS}^{-1}\A$ for case II with $l=16$ for Example \ref{ex1}.}
       \begin{tabular}{c|c|c}
       \hline 
         &  Real eigenvalue $\lambda$& Non-real eigenvalue $\lambda$\\[1ex]
           \hline  
      & Bounds of Theorem \ref{th42} & Bounds of  Theorem \ref{th43} \\[1ex]
      \hline
      \multirow{3}{*}{  $\P_{PESS}^{-1}\A$   }  & contained in  &  $0.0667\leq |\lambda|\leq 0.0739$  \\[0.5ex]
      &  the interval & $4.258\times 10^{-5}\leq \Re(\lambda/(1-s\lambda))\leq 0.5$\\[0.5ex]
      &$(0, 0.071429]$ & $|\Im(\lambda/(1-s\lambda))|\leq 31.6386$\\[1ex]
      \hline 
      & Bounds of Theorem \ref{theorem:RPESS}&  Bounds of Theorem \ref{theorem:RPESS}\\[1ex] 
      \hline
    \multirow{3}{*}{  $\P_{LPESS}^{-1}\A$   }    &contained in & $0.0285\leq |\lambda|\leq 0.07693$ \\[0.5ex]
    & the interval & $ 1.8666\times 10^{-4}\leq |\lambda-\frac{1}{s}|\leq 0.04384$\\[0.5ex]
    &$(0.0416, 0.0769]$ & \\[0.5ex]
    \hline
    
       \end{tabular}
      
       \label{tab:eigbounds_EX1}
   \end{table}
			
		 \vspace{2mm}
\noindent	 { \textbf{Condition number analysis:}} To investigate the robustness of the proposed \textit{PESS} preconditioner, we measure the condition number of the $\P_{PESS}^{-1}\A,$ which is  for any nonsingular matrix $A$ is defined by $\kappa(A):=\|A^{-1}\|_2\|A\|_2.$  In Figure \ref{fig:condition_number}, the influence of the parameter $s$ on the  condition number of  $\P_{PESS}^{-1}\A$ for Case II with $l=32$ is depicted. The parameter  $s$ is chosen from the interval $[5,50]$ with step size one. 

 \vspace{2mm}
\noindent  {\textbf{Discussions:}}			The data presented in Tables \ref{tab2} and \ref{tab:rev1} and Figures \ref{fig1}-\ref{fig:condition_number} allow us to make the following noteworthy observations:
			\begin{itemize}
				\item From Table \ref{tab2}, it is observed that the {\it GMRES} has a very slow convergence speed.  In both Cases I and II, our proposed \textit{PESS} and \textit{LPESS} preconditioners outperform all other compared existing preconditioners from the aspects of IT and CPU times. For example,  in Case I with $l=80,$  our  proposed  \textit{PESS} preconditioner is $76\%$, $43\%$,  {$36\%,$ $22\%,$} $28\%$,  $27\%,$  $33\%$  {and $39\%$} more efficient than the existing {\it BD, IBD,  {MAPSS, SL,} SS, RSS,} {\it EGSS}  and  {\textit{RPGSS}} preconditioner, respectively. Moreover, \textit{LPESS} preconditioners performs approximately $78\%,$ $48\%,$  {$38\%,$ $29\%$,} $35\%,$ $34\%,$  $39\%$ and  { $44\%$} faster than {\it BD, IBD,  {MAPSS, $\P_{SL},$} SS, RSS}, {\it EGSS} and  {\textit{RPGSS}} preconditioner, respectively.  Similar patterns are noticed for $l=16, 32, 48, 64$ and  {$128$}.  
    \item  {Comparing the results of Tables \ref{tab2} and \ref{tab:rev1}, we observe that in both cases, \textit{PESS} and \textit{LPESS} preconditioners outperform the existing baseline preconditioners and IT  are the same as in the experimentally found optimal parameters. This shows that the parameter selection strategy in Section \ref{Sec:parameter} is effective.}
				\item In Figure \ref{fig1}, it is evident that the \textit{PESS} and \textit{LPESS} preconditioners have a faster convergence speed than the other baseline preconditioners when applied to the {\it PGMRES} process for all $l=16,32,48,64, 80$ and  {$128.$}  \begin{figure}[]
				\centering
					\centering
					\includegraphics[width=0.5\textwidth]{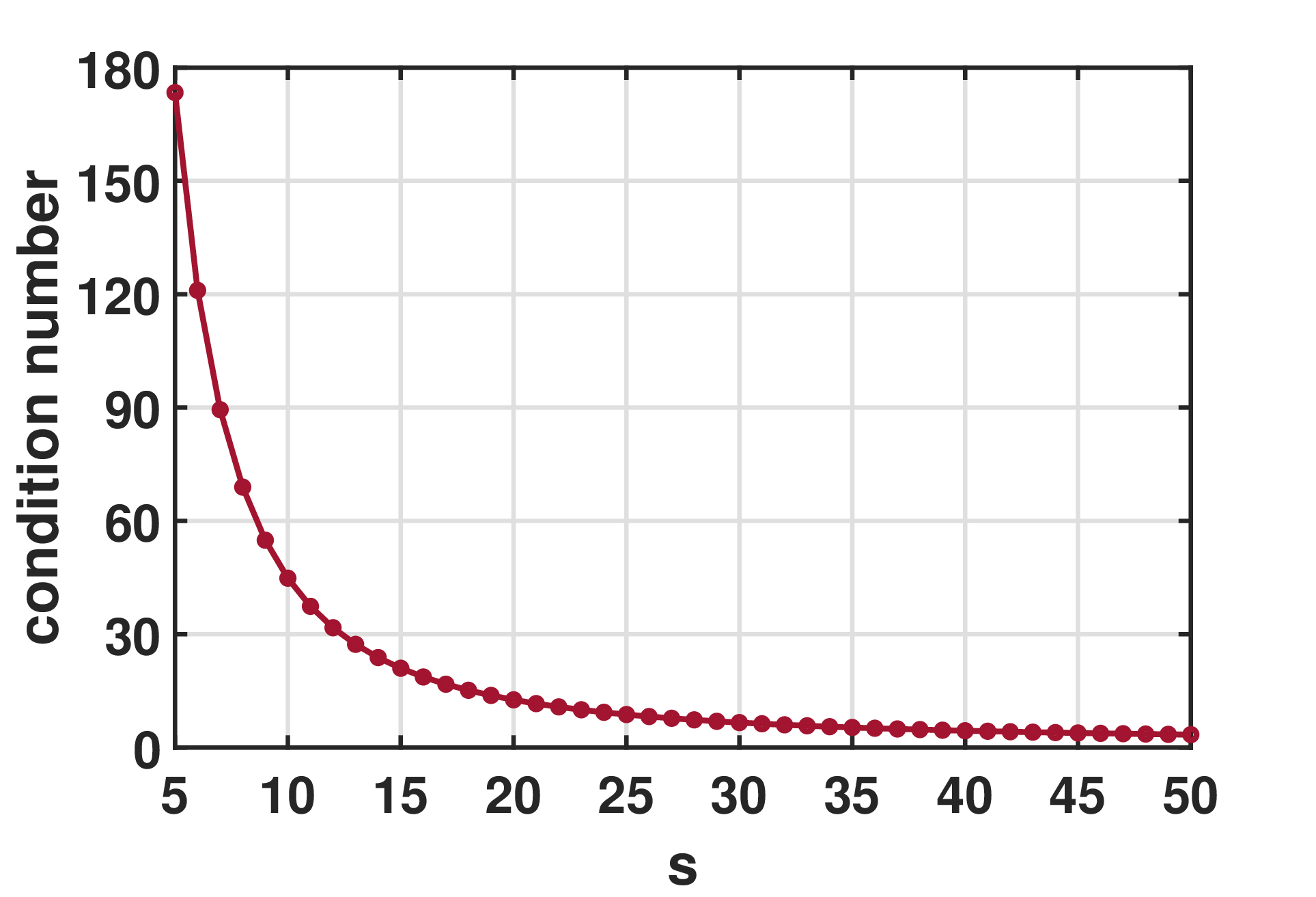}
				\caption{Parameter $s$ vs condition number of the preconditioned matrix $\P_{PESS}^{-1}\A$ for Case II with $l=32$ for Example \ref{ex1}.}
				\label{fig:condition_number}
			\end{figure}
				\item According to Figure \ref{fig2},  the spectrum of $\P_{PESS}^{-1}\A$ and $\P_{LESS}^{-1}\A$ have better clustering properties than the baseline preconditioned matrices, consequently improves the computational efficiency of our proposed \textit{PESS} and \textit{LPESS}  {\it PGMRES} processes.  Moreover, the real eigenvalues of  $\P_{PESS}^{-1}\A$  are contained in the interval $(0,    0.071429]$  and for $\P_{SS}^{-1}\A$ and $\P_{EGSS}^{-1}\A$  are in $(0, 1.9991]$ and $(0,1],$ respectively, which are consistent  with bounds of Theorem \ref{th42}, Corollaries \ref{coro1} and \ref{coro2}, respectively. For non-real eigenvalues of $\P_{PESS}^{-1}\A,$ obtained bounds are consistent with Theorem \ref{th43}.  {Moreover, from Figure \ref{fig:eigenbounds}(b), we observe that eigenvalues of $\P_{LPESS}^{-1}\A$ are contained in $C_2\leq \lambda \leq C_1  \cap C_4\leq \lambda \leq C_4$, which shows the consistency of the bounds in Theorem \ref{theorem:RPESS}.}
				\item 
				For $l=32,$ the computed condition numbers of  $\A, \P_{BD}^{-1}\A$  $ \P_{IBD}^{-1}\A, $  $ {\P_{MAPSS}^{-1}\A}$  {and $\P_{SL}^{-1}\A$} are  $ 5.4289e+ 04,$ $ 9.5567e+ 09,$  $ 9.5124e+ 09,$    { $7.2548e+05$ and $4.2852e+09
,$} respectively, which are very large.  Whereas Figure \ref{fig:condition_number}  {illustrates} a decreasing trend in the condition number of $\P_{PESS}^{-1}\A$ (for instance, when $s=50$, $\kappa(\P_{PESS}^{-1}\A)= 3.4221$) with increasing values of  $s.$ This observation highlights that  $\P_{PESS}^{-1}\A$ emerges as well-conditioned, consequently, the solution obtained by the proposed \textit{PESS} {\it PGMRES} process is more reliable and robust.
			\end{itemize}
   The above discussions affirm that the proposed \textit{PESS} and \textit{LPESS} preconditioner is efficient, robust and better well-conditioned with respect to the baseline preconditioners.
   %
   
		\begin{figure}[ht!]
			\centering
				\centering
				\includegraphics[width=0.45\textwidth]{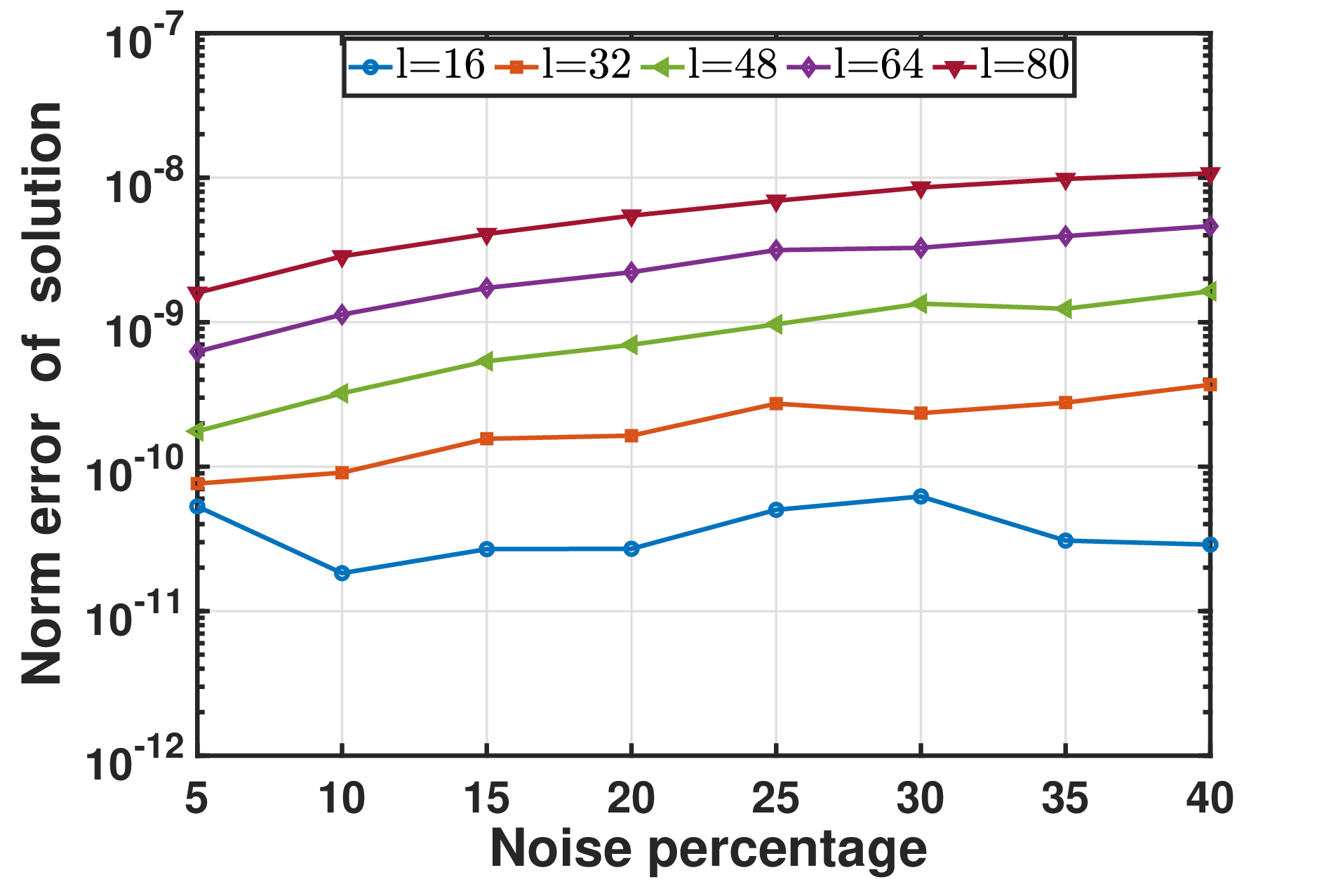}
				\label{fig:noise}
			\caption{Relationship of norm error of solution with increasing 
				noise percentage, employing proposed $\P_{PESS}$ for Case II with $s=12$ for $l=16,32,48,64$ and $80$ for Example \ref{ex1}.}
			\label{fig:noise_error}
		\end{figure}
		
 \vspace{2mm}
\noindent\textbf{Sensitivity analysis of the solution by employing the proposed \textit{PESS} preconditioner:}		To study the sensitivity of the solution obtained by employing the proposed \textit{PESS} {\it PGMRES} process to small perturbations on the input data matrices. In the following, we consider the perturbed counterpart 
		$(\A+\Delta \A )\tilde{\textbf{u}}=\textbf{d}$ of the three-by-three block \textit{SPP} \eqref{eq11}, where the perturbation matrix $\Delta \A $ has same structure as of $\A.$ We construct the perturbation matrix $\Delta \A$ by adding noise to the block matrices $B$ and $C$ of $\A$ as follows: $$\Delta B=10^{-4} * N_P * std(B).*randn(m,n)\quad\text{and} \quad \Delta C=  10^{-4} * N_P * std(C).*randn(p, m),$$
		where  $N_P$ is the noise percentage and $std(B)$ and $std(C)$ are the standard deviation of $B$  and $C,$ respectively. Here, $randn(m, n)$ is an $m$-by-$n$ normally distributed random matrix generated by the MATLAB command $randn$. We perform the numerical test for $l=16,32,48,64$ and $80$ for Example \ref{ex1} using the {\it PESS PGMRES} process (Case II, $s=12$). The calculated norm error $\|\Tilde{\textbf{u}}-\textbf{u}\|_2$ among the solution of the perturbed system and the original system with increasing $N_P$ from  $5\%$ to $40\%$ with step size $5\%$ are displayed in the  {Figure} \ref{fig:noise_error}. We noticed that with growing $N_P,$ the norm error of the solution remains consistently less than $10^{-8}, $ which demonstrates the robustness of the proposed \textit{PESS} preconditioner and the solution $\textbf{u}$ is insensitive to small perturbation on $\A$.
  \end{exam}
		\begin{exam}\label{ex2}
  	\begin{table}[ht!]
				\centering
				\caption{Numerical results of  {\it GMRES}, {\it  IBD,  {\textit{MAPSS}, SL,}  SS, RSS, EGSS,  {RPGSS,} PESS} and {\it LPESS PGMRES} processes for Example \ref{ex2}.}
				\label{tab3}
				\resizebox{10.5cm}{!}{
					\begin{tabular}{ccccccc}
						\toprule
						Process& $\textbf{h}$	& $1/8$ &$1/16$& $1/32$ & $1/64$ &   {$1/128$}\\
						\midrule 
						& size($\A$)	& $288$ &$1088$& $4224$ & $16640$&  {$66048$} \\
						\midrule
						\multirow{3}{*}{{\it GMRES}}& IT& $158$&$548$&$2048$&$\bf{--}$&  {$\bf{--}$} \\
						& CPU&  $0.2325$&$0.6960$&$124.7595$&$4545.5072$ &  {$9396.6958$}\\
						&{\tt RES}&  $8.0113e-07$ & $8.8023e-07$ & $9.0887e-07$&$ 4.0481e-06$&  {$8.3021e-06$}\\	
						\midrule
						\multirow{3}{*}{\it IBD}& IT& $34$& $35$& $25$& $23$ &  {$21$}\\
						& CPU&$\bf 0.0147$&$\bf 0.1166$&$1.9070$&$32.6213$ &  {$1664.6991$}\\
						&{\tt RES}& $9.4111e-07$&$6.9932e-07$&$9.2544e-07$&$9.3340e-07$&  {$7.3550e-07$}\\
       \midrule
					\multirow{3}{*}{ {\it MAPSS}}&  {IT}&  {$11$} & {$7$} & {$4$} &  {$4$} &  {$4$}  \\
					&  {CPU}&  {$0.2061$}&
 {$0.3453$}&
 {$1.8717$}&
 {$26.6600$}&
 {$482.0390$} \\
					&{ {\tt RES}}& {$5.2642e-08$}&
 {$1.1366e-07$}&
 {$7.1116e-08$}&
 {$8.2780e-07$}&
 {$2.1189e-09$}  \\
						\midrule
      \multirow{3}{*}{ {\textit{SL}}}&   {IT}&  {$6$} & {$6$} & {$5$} &  {$5$} &  {$4$}  \\
					&  {CPU}&
 {$0.2130$}&
 {$0.2929$}&
 {$1.8697$}&
 {$27.1124$} &  {$516.3012$} \\
					&{ {\tt RES}}& {$6.6518e-07$}&
 {$1.1953e-08$}&
 {$9.5638e-08$}&
 {$2.1528e-11$}&
 {$2.9479e-10$}  \\
						\midrule
						& & &\textbf{ Case I}&&  \\
						\midrule
						\multirow{3}{*}{\it SS}& IT& $5$&$6$&$9$&$11$&  {$11$}
\\
						& CPU&$0.3047$&
$0.4422$&
$2.4279$&
$51.5825$&  {$1245.2019$}\\     &{\tt RES}&$4.5523e-07$&
$7.9331e-07$&
$2.5307e-07$&
$4.5327e-07$&  {$9.8497e-07$} \\
						\midrule
						\multirow{3}{*}{\it RSS}& IT& $\bf 4$& $\bf 4$& $5$& $5$ &  {$6$} \\
						&CPU&$0.2044$&$0.3436$&$1.6047$&$27.6128$&  {$720.4234$} \\
						&{\tt RES}&$3.4853e-08$&
$9.2155e-07$&
$1.7469e-08$&
$2.7261e-07$ &  {$3.2621e-09$} \\
						\midrule
						\multirow{3}{*}{\it EGSS}& IT& $7$&$9$&$11$&$15$&  {$5$}\\
						& CPU&$0.2389$&$0.4051$&$4.4353$&$52.7804$ &  {$744.5983$} \\
						&{\tt RES}&$1.6861e-07$&$4.0517e-08$&$1.3223e-07$&$1.774e-07$&  {$9.1207e-07$}
 \\
   \midrule
					\multirow{3}{*}{ {\it RPGSS}}& { IT}&  {$4$} & {$4$} & {$4$} &  {$5$} &  {$5$}  \\
					&  {CPU}&  {$0.2105$}&
 {$0.3883$}&
 {$2.2981$}&
 {$28.1748$}&
 {$544.8828$} \\
					&{ {\tt RES}}& {$2.8700e-09$}&
 {$5.1131e-08$}&
 {$3.3646e-07$}&
 {$8.8663e-09$}&
 {$1.4981e-07$}  \\
						\midrule
						\multirow{3}{*}{\textit{PESS}$^{\dagger}$}& IT& $4$& $4$& $5$& $5$ & {$4$}\\
						& CPU&$0.2068$&
$0.3480$&
$1.7633$&
$25.2920$&  {$484.2796$}
 \\
						$s=30$ &{\tt RES}&$2.3499e-08$&
$4.2761e-07$&
$2.1247e-08$&
$2.0987e-07 $&  {$4.3859e-07$} \\
      \midrule
						\multirow{3}{*}{\textit{LPESS}$^{\dagger}$}& IT& $\bf 3$& $\bf{3}$& $\textbf{4}$& $\textbf{3}$ & $ {4}$\\
						& CPU&$ \bf 0.1990$&
$\bf 0.2578$&
$\bf 1.4778$&
$\bf 17.9730 $&  {$457.7446$}\\
						$s=30$ &{\tt RES}&$1.6606e-07$&
$8.3803e-07$&
$4.9289e-10$&
$5.2658e-07$&  {$3.3191e-08$}\\
						\midrule
						& &&  \textbf{Case II}& & \\
						\midrule
						\multirow{3}{*}{\textit{SS}}& IT& $29$&
$38$&
$60$&
$52$&  {$55$}\\
						& CPU&$0.2999$&
$1.1144$&
$13.4580$&
$213.2381$&  {$4980.7387$} \\
						&{\tt RES}&$ 7.3560e-07$&
$9.5790e-07$&
$9.2515e-07$&
$9.6425e-07$&  {$9.6948e-07$}\\
						\midrule
						\multirow{3}{*}{\textit{RSS}}& IT& $12$& $13$& $14$& $12$&  {$12$}\\
						& CPU&$0.3797$&
$0.6019$&
$3.4491$&
$50.3476$&  {$1361.8902$}\\
						&{\tt RES}&$3.2456e-07$&
$5.3086e-07$&
$6.6600e-07$&
$4.2834e-07$&  {$1.2733e-07$}\\
						\midrule
						\multirow{3}{*}{\textit{EGSS}}& IT& $9$& $9$& $9$& $7$&  {$4$} \\
						& CPU&$0.2459$&$0.4180$&$2.4320$&$27.9359 $&  {$419.1294$}\\
						&{\tt RES}&$7.9001e-09$&$8.6623e-09$&$7.4644e-09$&$1.2457e-07$&  {$7.6469e-08$}\\
      \midrule
					\multirow{3}{*}{ {\it RPGSS}}& IT&  {$6$} & {$7$} & {$7$} &  {$5$} &  {$4$}  \\
					&  {CPU}&  {$0.2154$}&
 {$0.4096$}&
 {$1.8893$}&
 {$23.2802$}&
 {$423.0121$} \\
					&{ {\tt RES}}& {$5.2642e-08$}&
 {$1.1366e-07$}&
 {$7.1116e-08$}&
 {$8.2780e-07$}&
 {$2.1189e-09$}  \\
						\midrule
						\multirow{3}{*}{\textit{PESS}$^{\dagger}$}& IT& $5$& $5$& $5$& $5$&  { $\bf 3$}\\
						& CPU&$0.2091$&
$0.4184$&
$1.8118$&
$25.8048$&  { $\bf 365.3128$}
 \\
						$s=26$ &{\tt RES}&$2.3801e-08$&
$1.1965e-08$&
$8.4923e-09$&
$5.9356e-08 $&  {$4.2039e-07$} \\
      \midrule
						\multirow{3}{*}{\textit{LPESS}$^{\dagger}$}& IT& $4$& $5$& $\bf 5$& $\bf 5$&  { $\bf 3$}\\
						& CPU&$0.2070$&
$0.3662$&
$\bf 1.5330$&
$\bf 23.5997$&  { $ \bf 359.1330$}\\
						$s=26$ &{\tt RES}&$2.4250e-07$&
$8.6182e-12$&
$5.3930e-11$&
$8.1253e-10$& {$5.8345e-07$} \\
						\bottomrule
						\multicolumn{7}{l}{ Here ${\dagger}$ represents the proposed preconditioners. The boldface represents the top two results.}\\
						\multicolumn{7}{l}{ $\bf{--}$ indicates that the iteration process does not converge within the prescribed IT.}
					\end{tabular}
				}
			\end{table} 
		 {\textbf{Problem formulation:}}	In this example, we consider the three-by-three block {\it SPP} \eqref{eq11}, where block matrices $A$ and $B$ originate from the two dimensional Stokes equation namely ``leaky" lid-driven cavity problem \cite{ Matlab2007}, in a square domain $\Xi=\{(x,y) ~|~ 0\leq x\leq 1, 0 \leq y\leq 1\}$, which is defined as follows:
			\begin{align}\label{eq51}
				& -\Delta {\bf u}+\nabla p=0, \quad \text{in} \quad \Xi,\\ \nonumber
				&\quad \quad \quad\nabla \cdot {\bf u}=0,  \quad \text{in} \quad \Xi.
   			\end{align}
			A Dirichlet no-flow condition is applied on the side and bottom boundaries, and the nonzero horizontal velocity on the lid is $\{y=1;-1\leq x\leq 1| {\bf u}_x= 1\}. $  In this context, $\Delta$ symbolizes the Laplacian operator in $\mathbb{R}^2$, $\nabla$ represents the gradient, $\nabla \cdot$ denotes the divergence. ${\bf u}$ and $p$ refer to the velocity vector field and the pressure scalar field, respectively. 
					  
\begin{table}[ht!]
       \centering
         \caption{  Numerical results of \textit{PESS-I}, \textit{LPESS-I}, \textit{PESS-II} and \textit{LPESS-II} \textit{PGMRES} processes for Example \ref{ex2}.}
          {\resizebox{13cm}{!}{
         \begin{tabular}{ccccccc}
						\toprule
						Process& $\textbf{h}$	& $1/8$ &$1/16$& $1/32$ & $1/64$ &   {$1/128$}\\
						\midrule 
						& size($\A$)	& $288$ &$1088$& $4224$ & $16640$&  {$66048$} \\
                            \midrule
						\multirow{1}{*}{{\it PESS-I}}& IT& $5$&$5$&$6$&$6$& $5$\\
					($s=1,$ $\Lambda_1=0.001I,$	& CPU&  $0.30867$&
$0.46688$&$2.48254$&$46.52887$ & {$577.9420$}\\
				$\Lambda_2=0.1I,$ $\Lambda_3=0.001I$)		&{\tt RES}& $1.4125e-08$&
$1.3379e-07$&
$3.8821e-08$&
$2.6061e-07$&
$9.7383e-07$\\		 
      \midrule
      \multirow{1}{*}{{\it LPESS-I}}& IT& $4$&$4$&$4$&$5$& $3$\\
      			($s=1,$ $\Lambda_2=0.1I,$	& CPU&  $0.1829$&$0.3319$&$1.2774$&$19.1090$ & {$349.6728$}\\
       	  $\Lambda_3=0.001I$)   &{\tt RES}&  $3.5507e-09$ & $3.4257e-08$ & $7.0375e-07$&$  2.3073e-09$& $ 7.1453e-07$\\
      \midrule
      \multirow{1}{*}{{\it PESS-II}}& IT& $3$&$4$&$4$&$4$& $4$\\
				($s=s_{est},$ $\Lambda_1=A,$		& CPU&  $0.18470$&$0.39536$&$1.32148$&$17.22840$&$485.01767$\\
$\Lambda_2=\beta_{est}I,$  $\Lambda_3= 10^{-4}CC^T$) &{\tt RES}& $ 9.6100e-07$&$5.6520e-07$&$7.8750e-08$&$3.9903e-08$&$6.9694e-09$\\
      \midrule
      \multirow{1}{*}{{\it LPESS-II}}& IT& $6$&$6$&$5$&$5$& $3$\\
					($s=s_{est},$ $\Lambda_2=\beta_{est}I,$	& CPU&  $0.1656$&
$0.3777$&
$1.4341$&
$19.0658$&
$377.2001$\\ 
  $\Lambda_3= 10^{-4}CC^T$)    &{\tt RES}& $2.5227e-07$ &$5.1622e-07$&$4.8924e-08$&$3.2751e-08$&$7.9950e-07$\\
      \midrule
       \end{tabular}}}
       \label{tab:rev2}
   \end{table}
To generate the matrices $A\in \R^{n\times n}$ and $B\in \R^{m\times n}$ of the system \eqref{eq11}, the discretization task of the Stokes equation \eqref{eq51} is accomplished by  the IFISS software developed by \citet{Matlab2007}. Following \cite{Matlab2007} 
    with grid parameters $\textbf{h}=1/8, 1/16, 1/32, 1/64,  {1/128}$,  {we get} $n= 2(1+ 1/\bf{h})^2$ and $m=1/\textbf{h}^2.$
			To make the system of equations in \eqref{eq11} not too ill-conditioned and not too sparse, we construct the block matrix $C= \bmatrix{\Pi & randn(p, m- p)}$,
			where $\Pi= \diag(1, 3, 5,\ldots, 2p- 1)$  and $p= m- 2.$ Additionally, to ensure the positive definiteness of the matrix $A$, we  add $0.001I$ with $A.$ 

	\begin{figure}[ht!]
	\centering
	\begin{subfigure}[b]{0.3\textwidth}
		\centering
		\includegraphics[width=\textwidth]{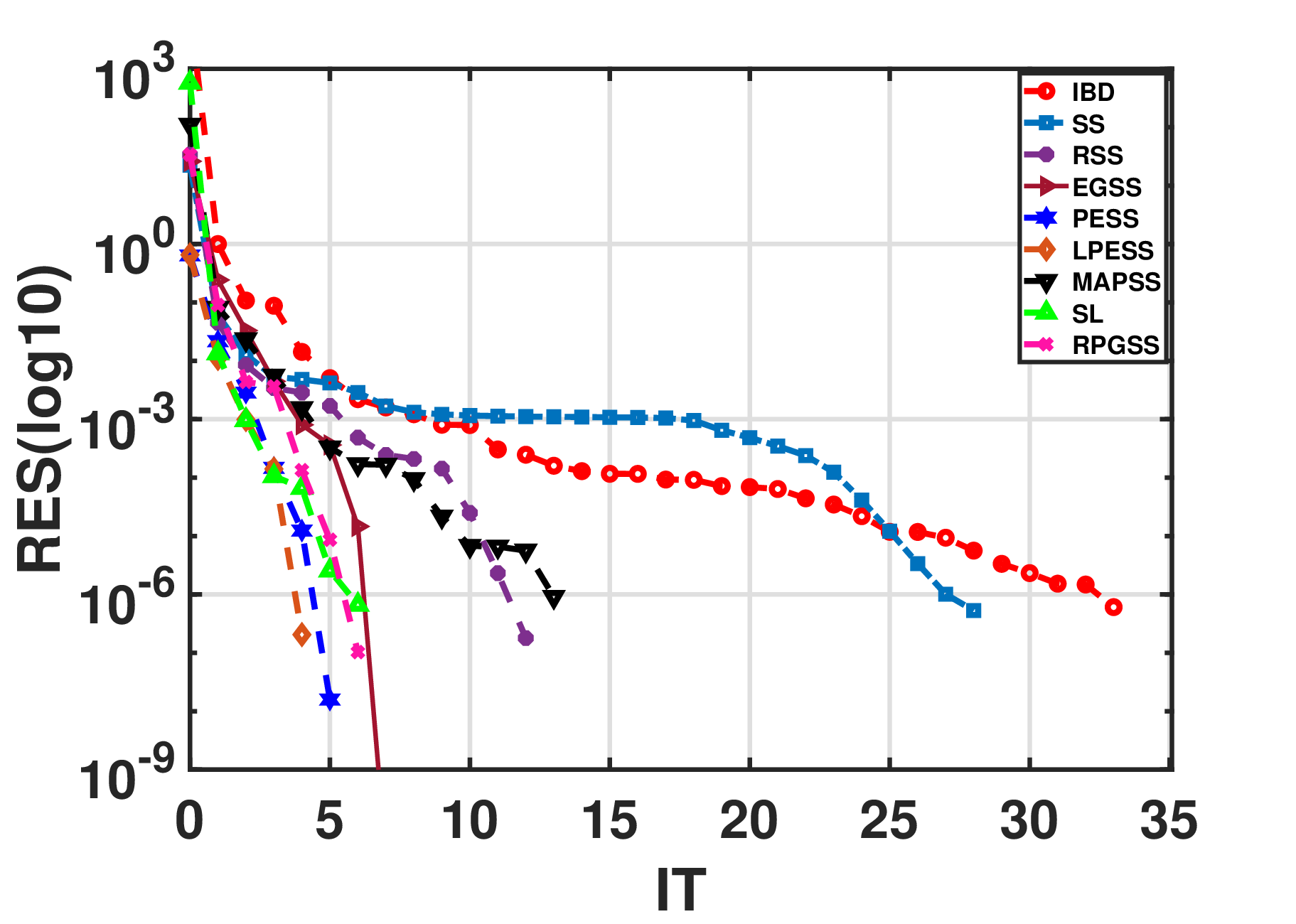}
		\caption{\footnotesize  $\textbf{h}=1/8$ }
		\label{figstokes8}
	\end{subfigure}
	\begin{subfigure}[b]{0.3\textwidth}
		\centering
		\includegraphics[width=\textwidth]{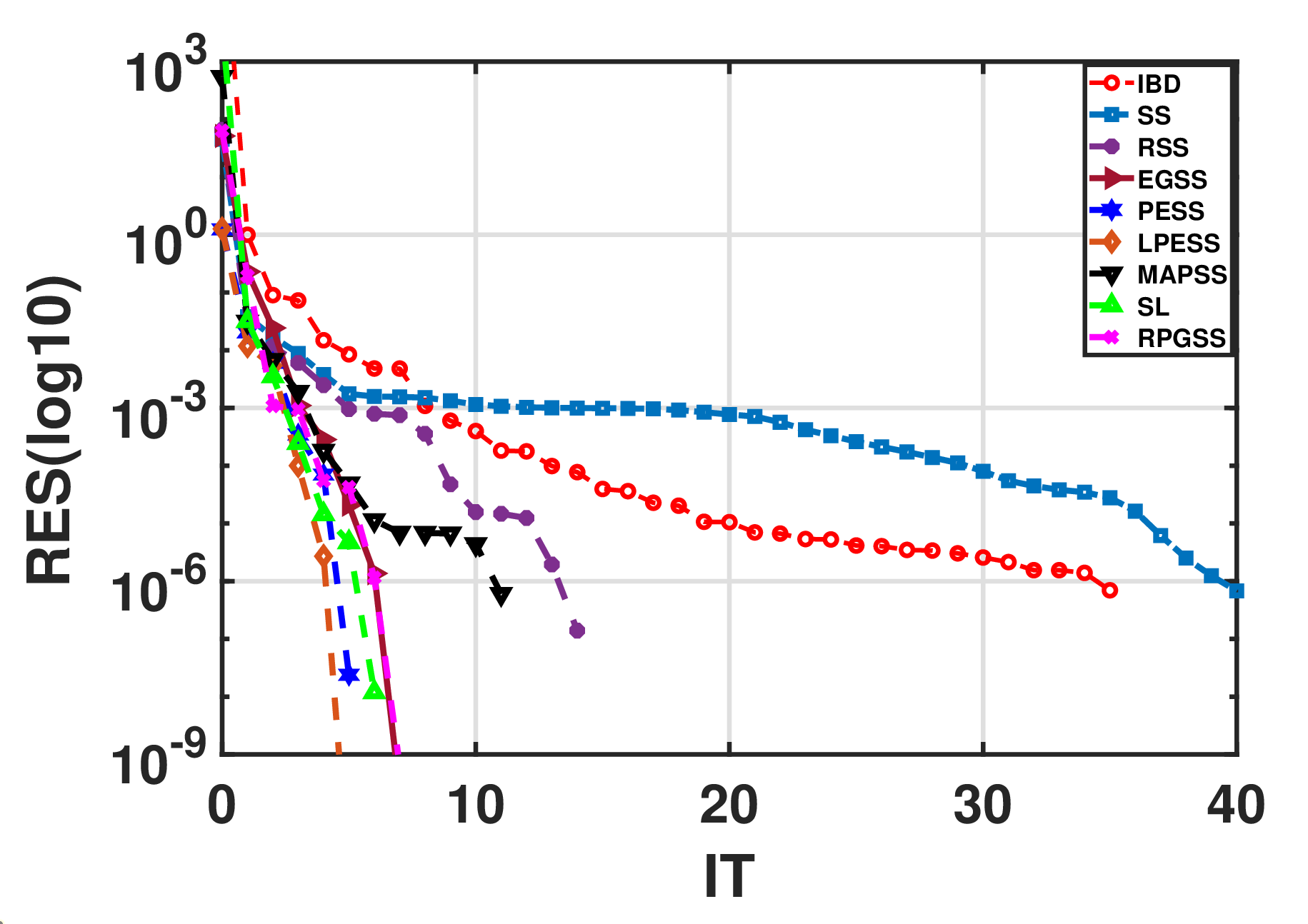}
		\caption{\footnotesize  $\textbf{h}=1/16$}
		\label{figstokes16}
	\end{subfigure}
	\begin{subfigure}[b]{0.3\textwidth}
		\centering
		\includegraphics[width=\textwidth]{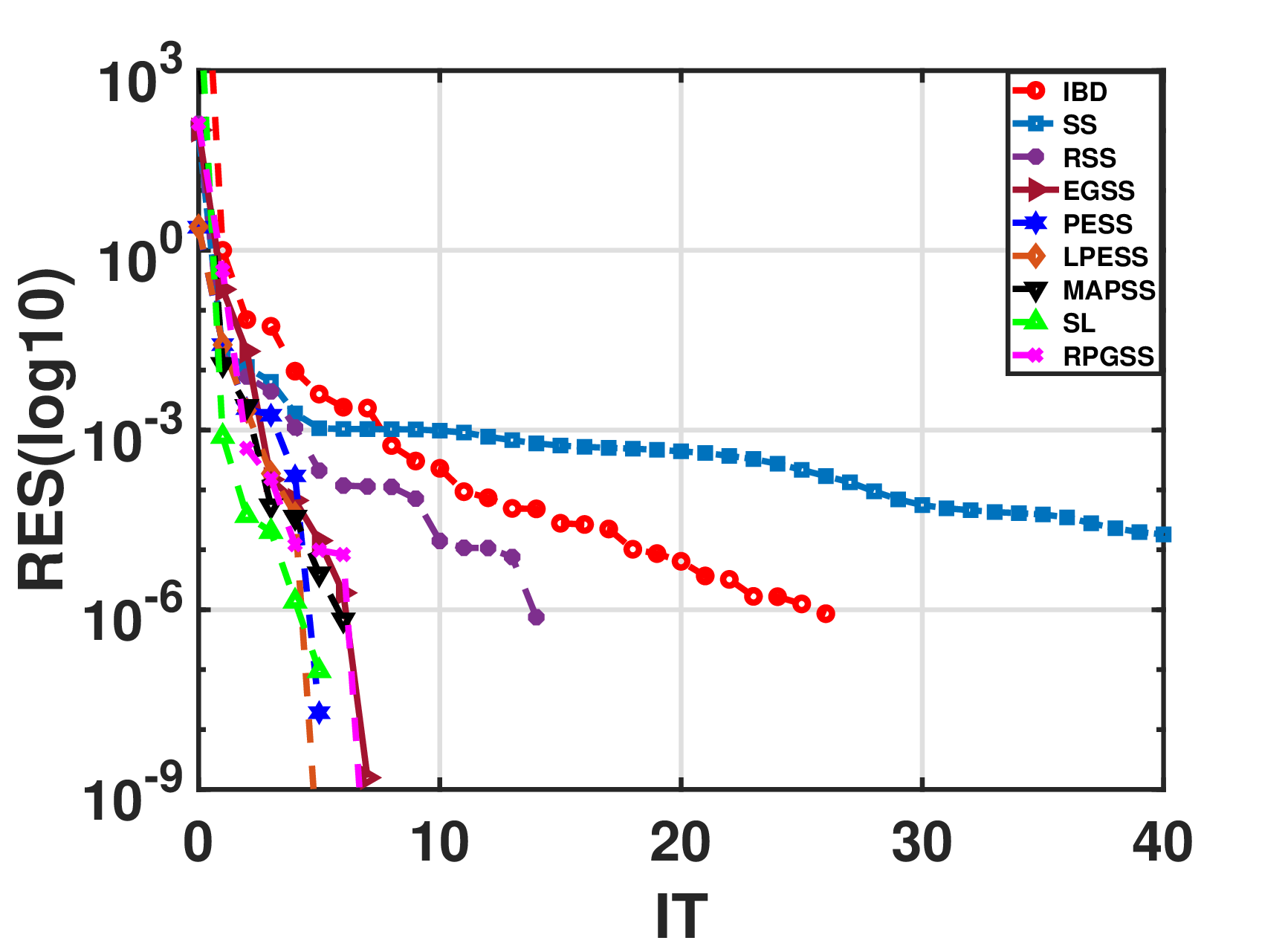}
		\caption{\footnotesize  $\textbf{h}= 1/32$}
		\label{figstokes32}
	\end{subfigure}
	\begin{subfigure}[b]{0.3\textwidth}
		\centering
		\includegraphics[width=\textwidth]{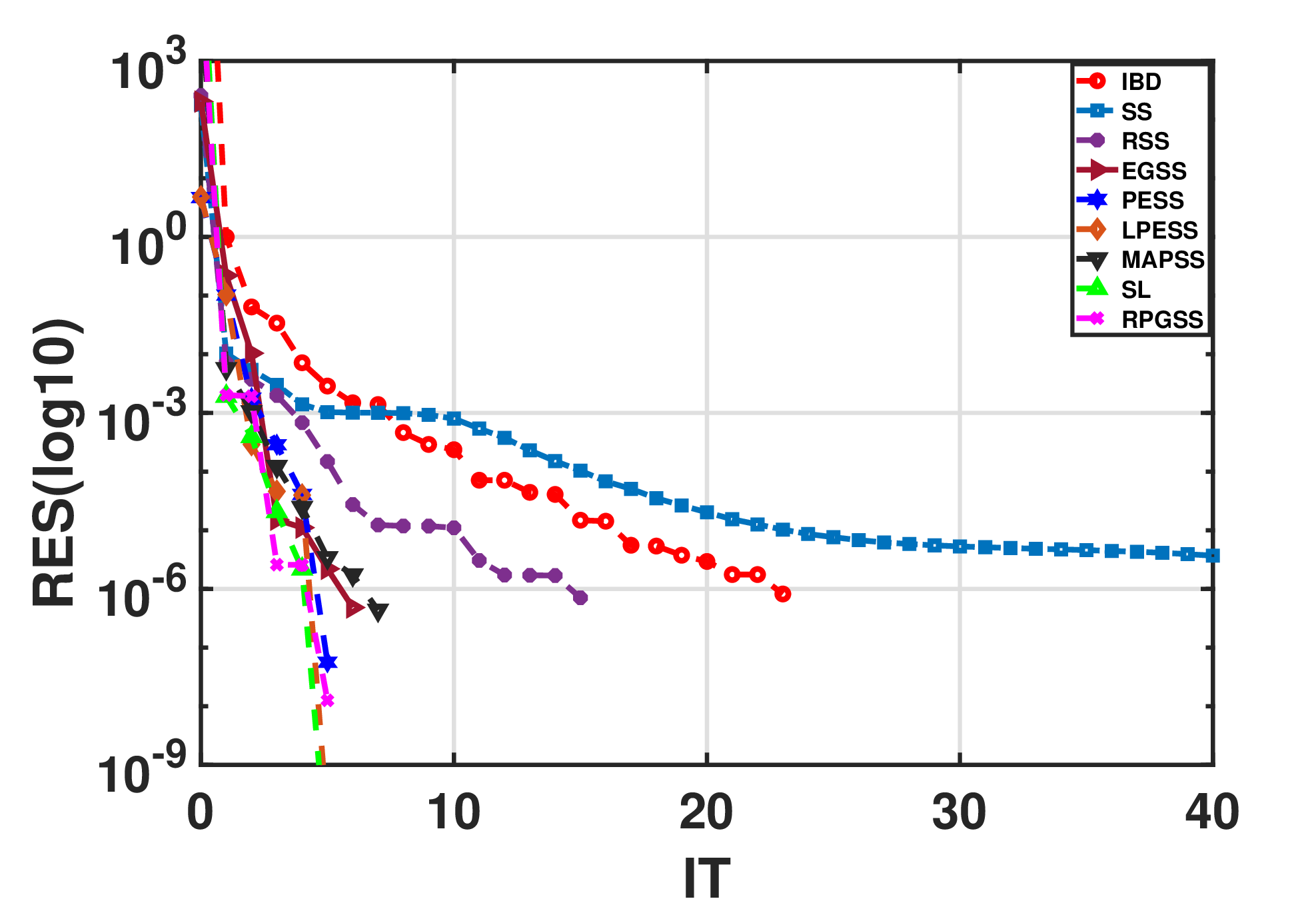}
		\caption{\footnotesize  $\textbf{h}= 1/64$}
		\label{figstokes64}
	\end{subfigure}
 \begin{subfigure}[b]{0.3\textwidth}
		\centering
		\includegraphics[width=\textwidth]{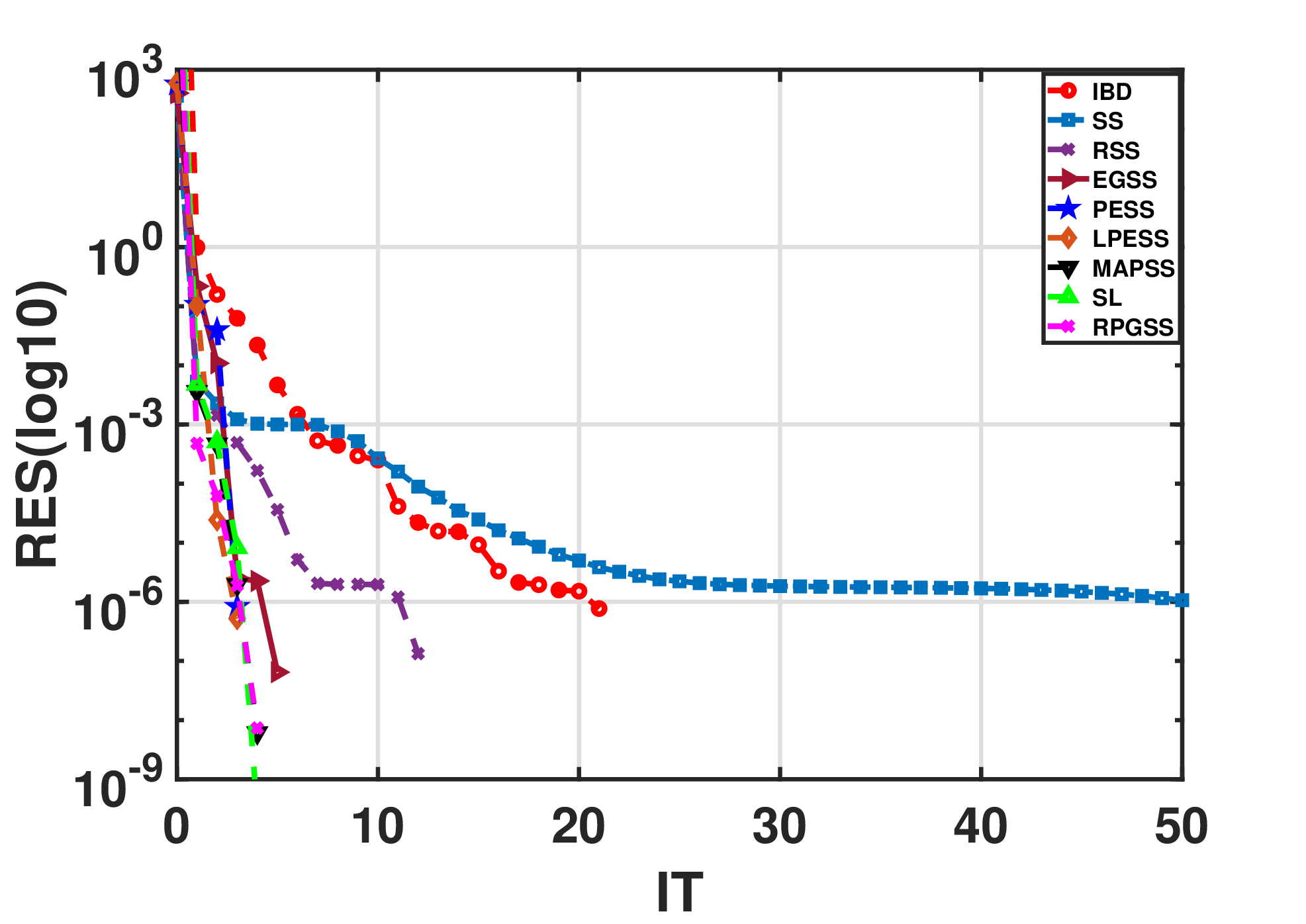}
		\caption{\footnotesize    $\textbf{h}= 1/128$}
		\label{figstokes128}
	\end{subfigure}
	\caption{Convergence curves for IT versus RES of the {\it PGMRES} processes by employing  {\it IBD,  {MAPSS, SL,} SS, RSS,  EGSS,  {RPGSS,} PESS} \text{and} \textit{LPESS}  preconditioners in Case II for Example \ref{ex2}.}
	\label{fig4}
\end{figure} 

   %
				\begin{figure}[ht!]
				\centering
				\begin{subfigure}[b]{0.3\textwidth}
					\centering    
                        \includegraphics[width=\textwidth]{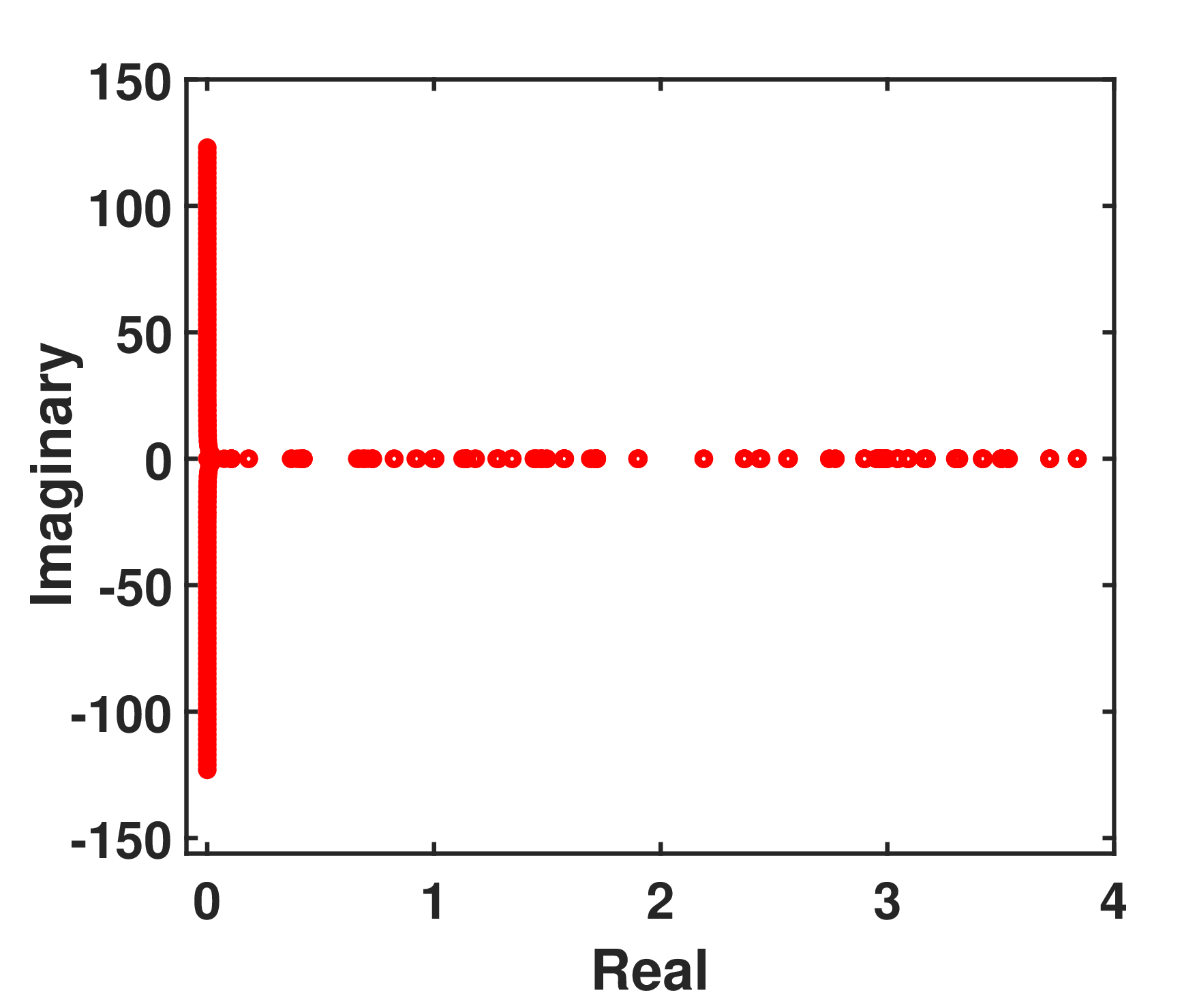}
					\caption{\footnotesize  $\A$}
					\label{original_stokes}
				\end{subfigure}
				\begin{subfigure}[b]{0.3\textwidth}
					\centering
					\includegraphics[width=\textwidth]{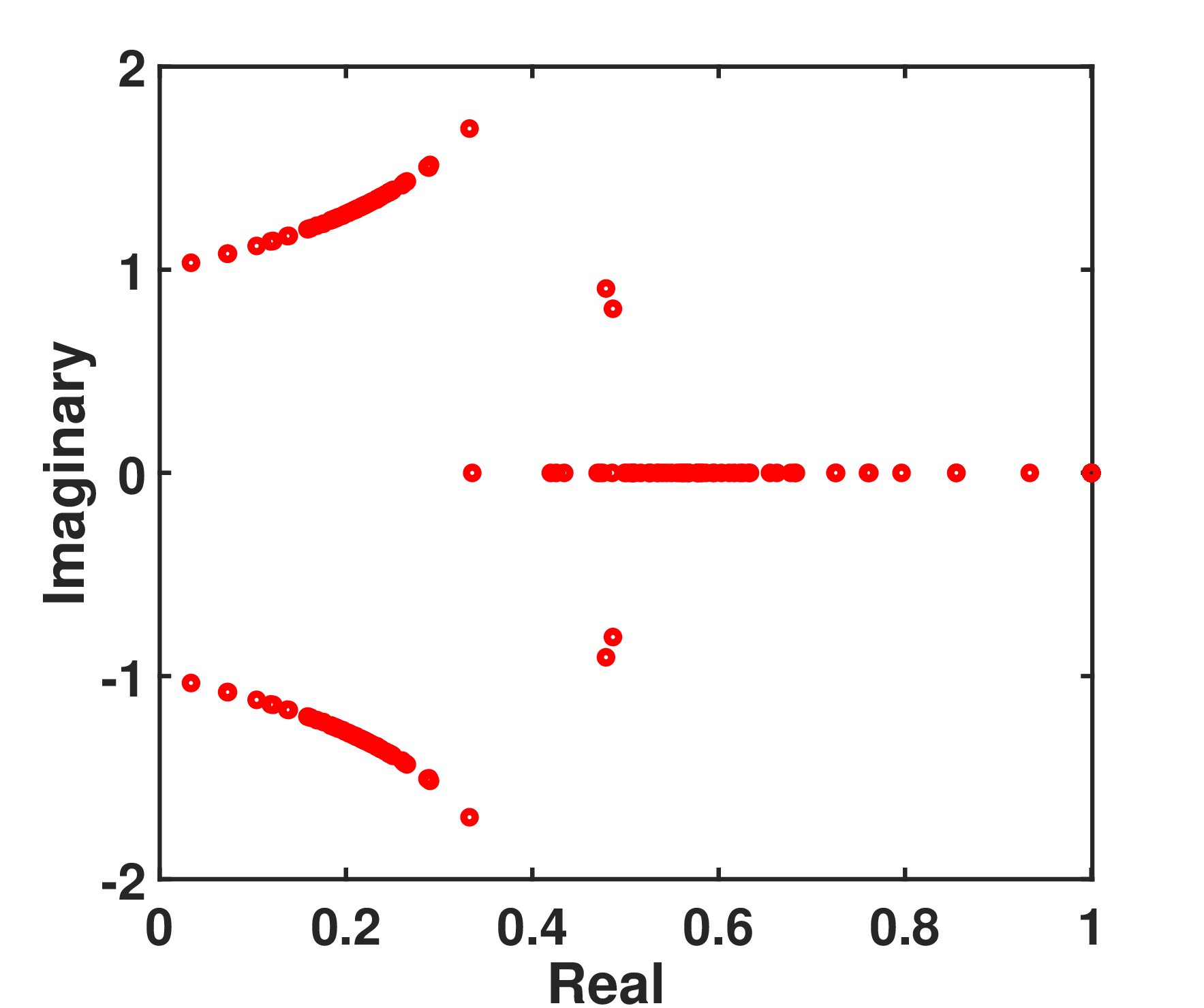}
					\caption{\footnotesize $\P_{IBD}^{-1}\A.$ }
					\label{IBD_STOKES}
				\end{subfigure}
    \begin{subfigure}[b]{0.3\textwidth}
					\centering
					\includegraphics[width=\textwidth]{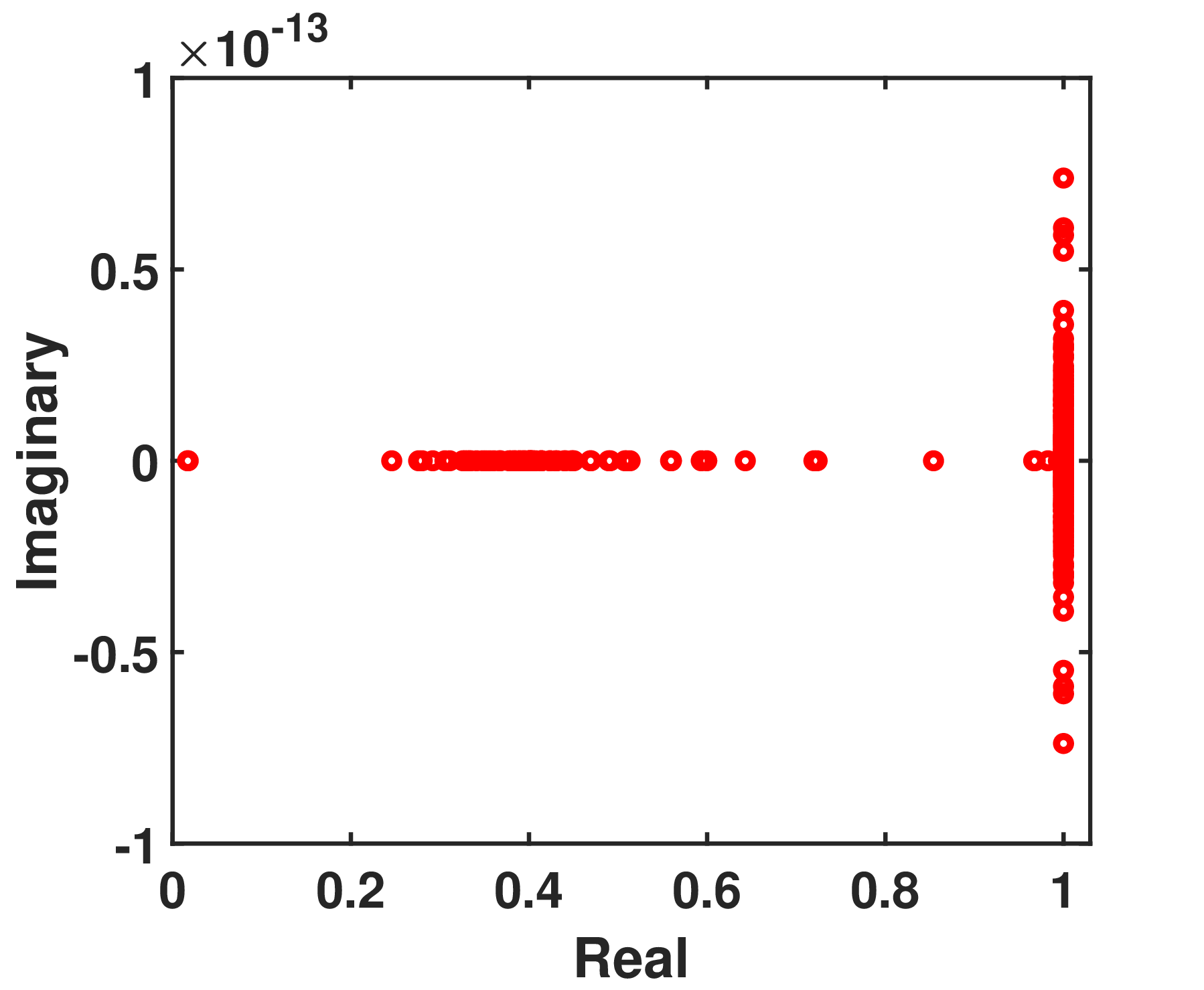}
					\caption{\footnotesize 
   $\P_{MAPSS}^{-1}\A$ }
					\label{MAPSS_STOKES}
				\end{subfigure}
     \begin{subfigure}[b]{0.3\textwidth}
					\centering
					\includegraphics[width=\textwidth]{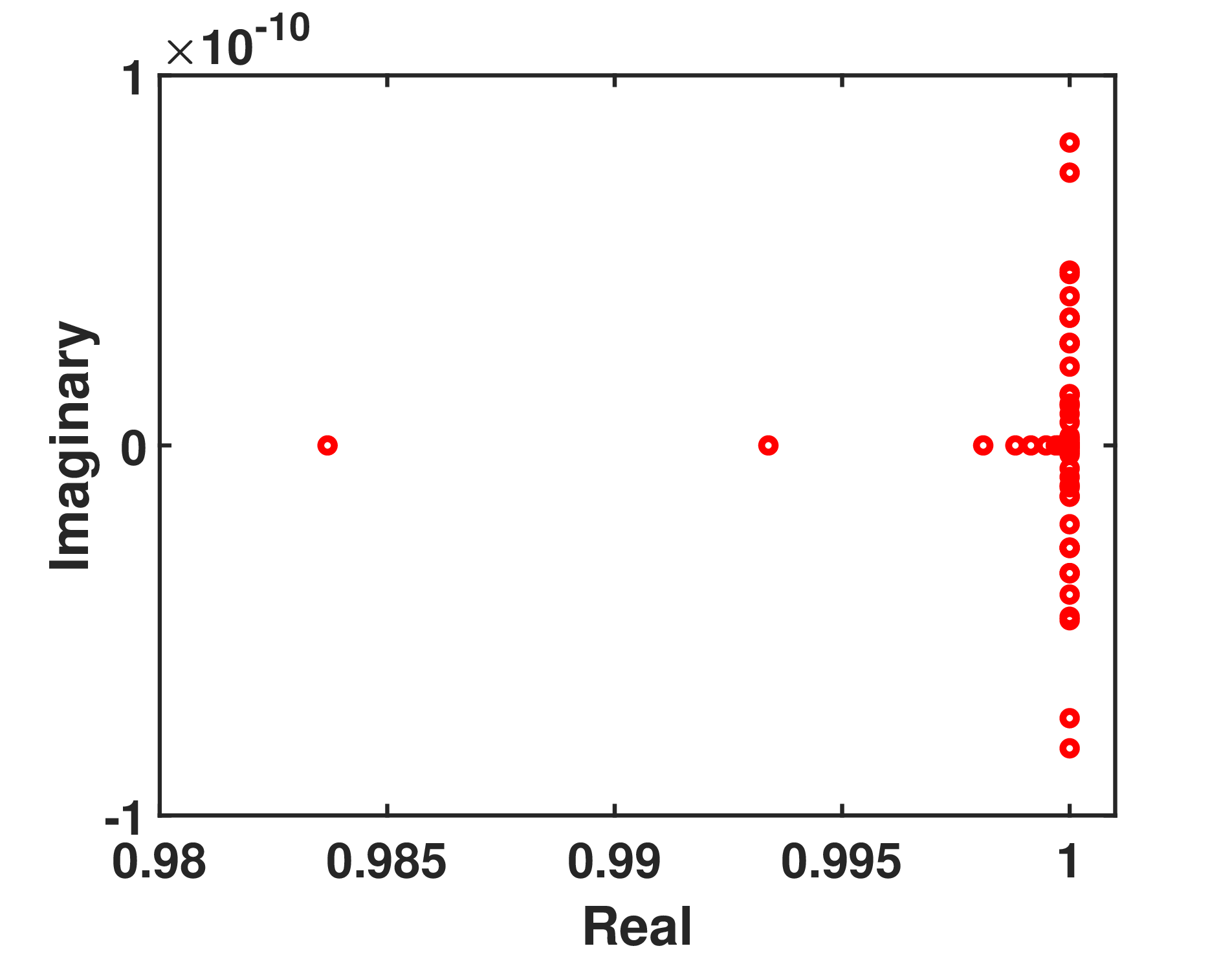}
					\caption{\footnotesize  $\P_{SL}^{-1}\A$ }
					\label{TBDP_STOKES}
				\end{subfigure}
				\begin{subfigure}[b]{0.3\textwidth}
					\centering
					\includegraphics[width=\textwidth]
					{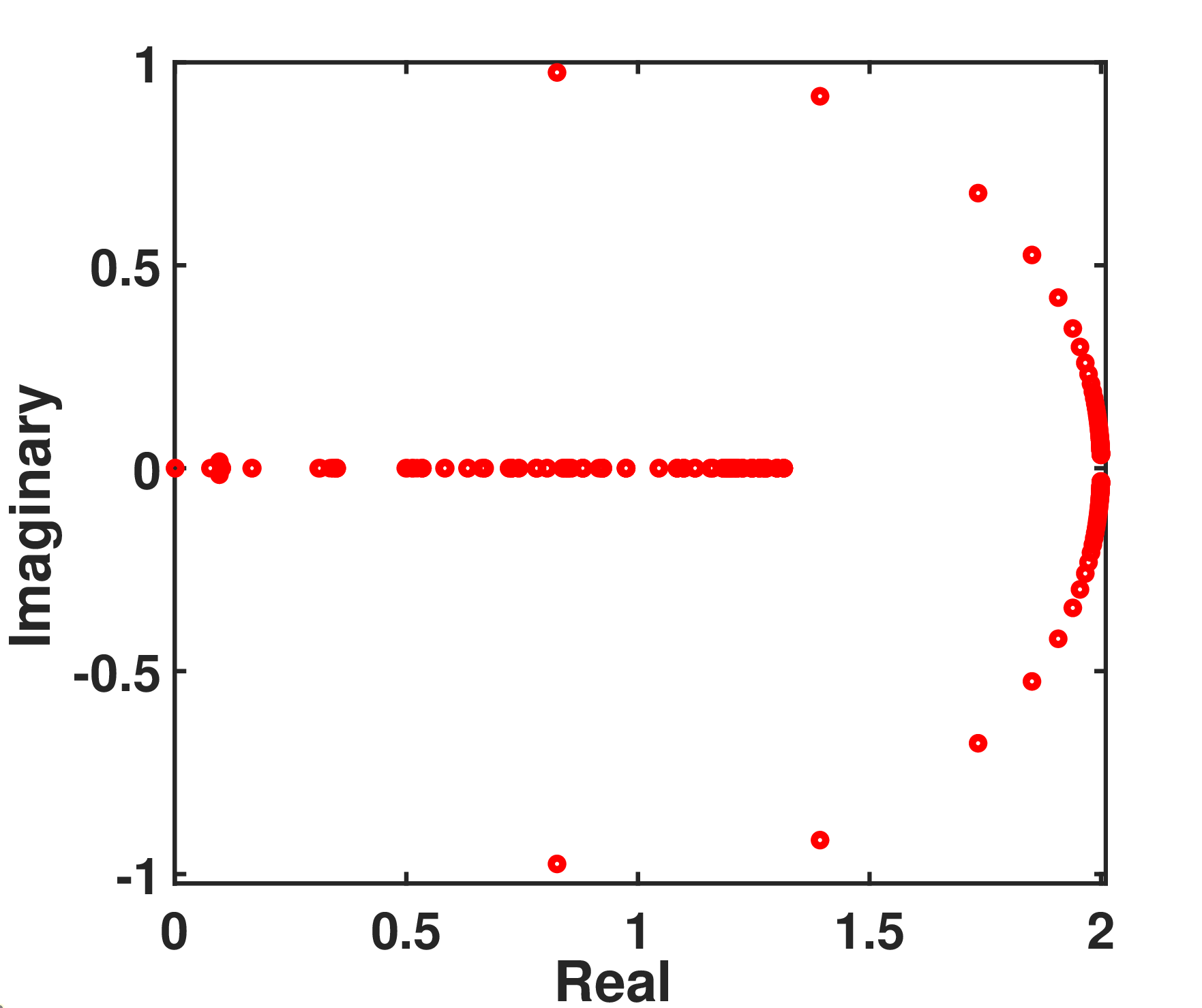}
					\caption{ \footnotesize $\P_{SS}^{-1}\A$ }
					\label{SS_STOKES}
				\end{subfigure}%
				\begin{subfigure}[b]{0.3\textwidth}
					\centering
					\includegraphics[width=\textwidth]{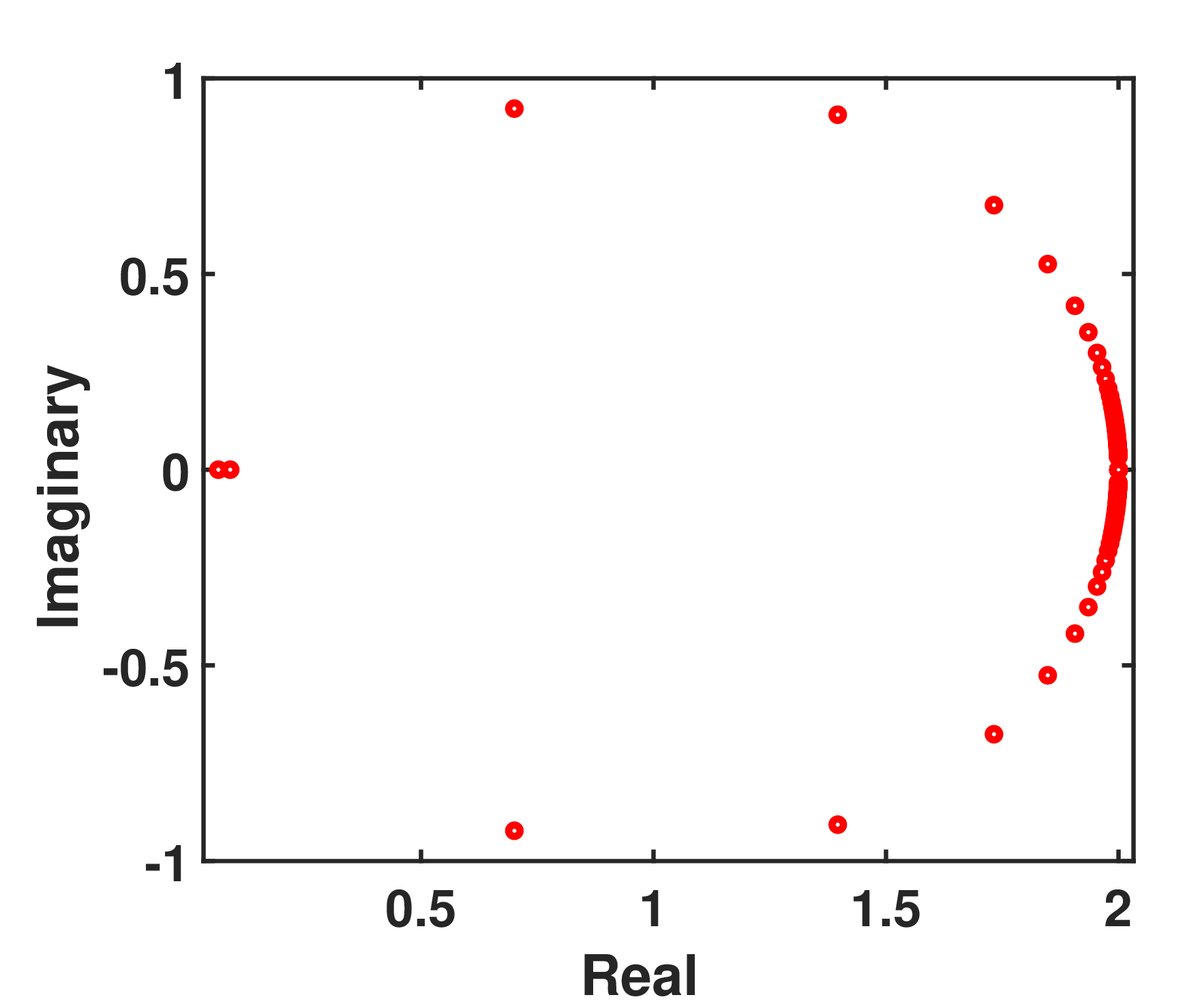}
					\caption{ \footnotesize  $\P_{RSS}^{-1}\A$}
					\label{RSS_STOKES}
				\end{subfigure}
				\begin{subfigure}[b]{0.3\textwidth}
					\centering
					\includegraphics[width=\textwidth]{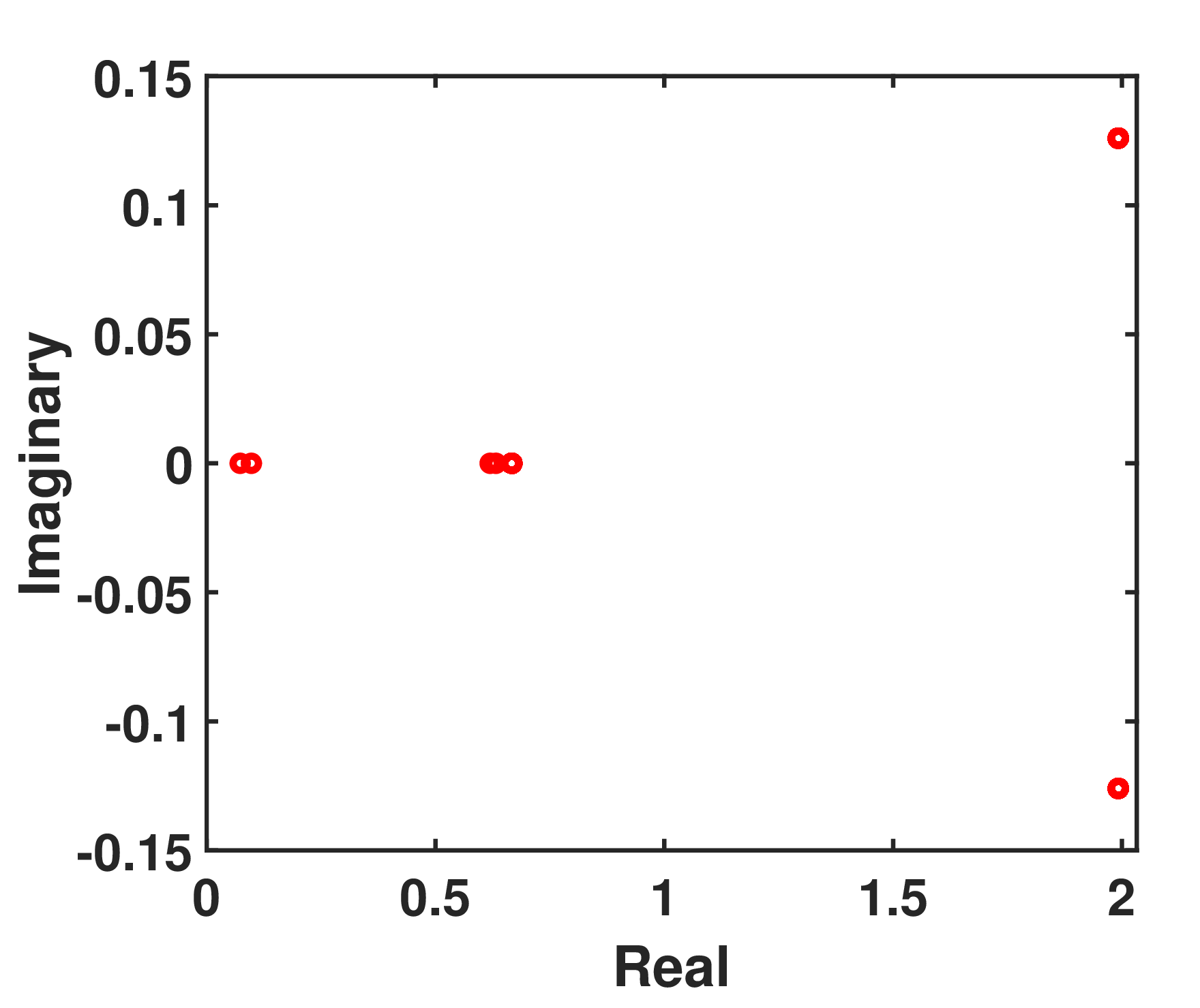}
					\caption{ \footnotesize $\P_{EGSS}^{-1}\A$}
					\label{EGSS_STOKES}
				\end{subfigure}
    \begin{subfigure}[b]{0.32\textwidth}
					\centering
					\includegraphics[width=\textwidth]{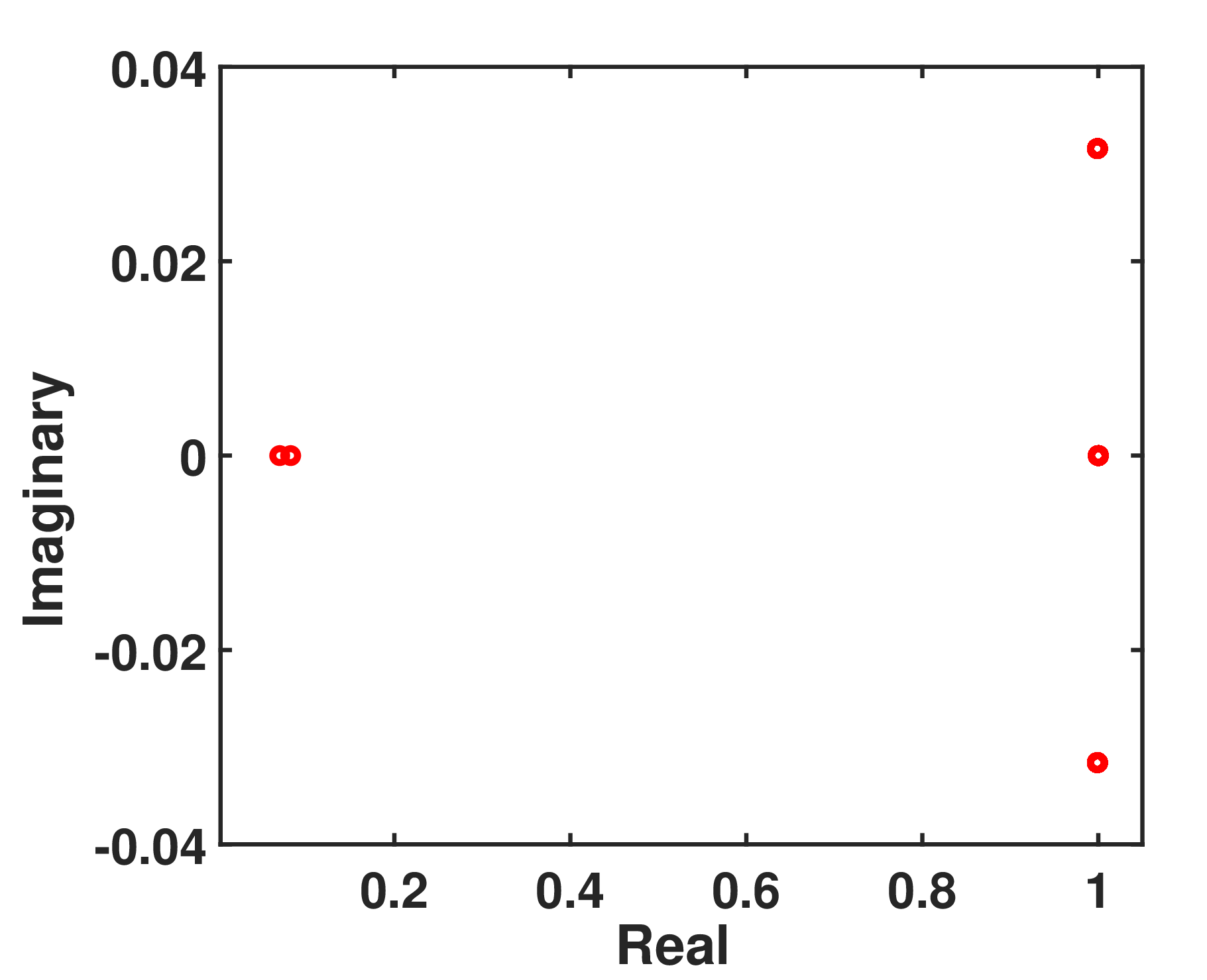}
					\caption{   $\P_{RPGSS}^{-1}\A$ }
					\label{fig:RPGSS}
				\end{subfigure}
				\begin{subfigure}[b]{0.3\textwidth}
					\centering
					 \includegraphics[width=\textwidth]{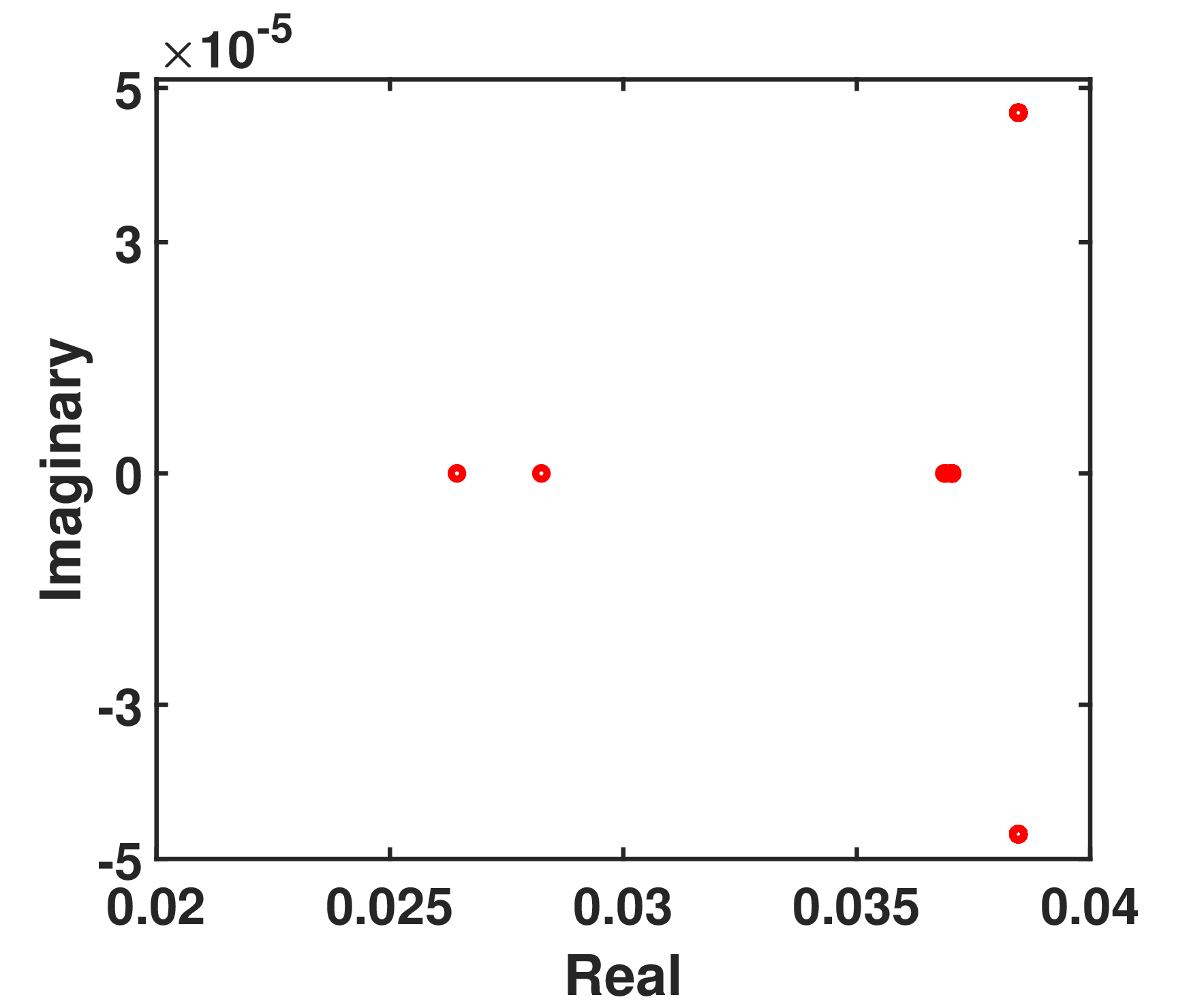}
					\caption{ \footnotesize   $\P_{PESS}^{-1}\A$ with $s=26$}
					\label{PESS_STOKES}
				\end{subfigure}
				\begin{subfigure}[b]{0.35\textwidth}
					\centering
					\includegraphics[width=\textwidth]{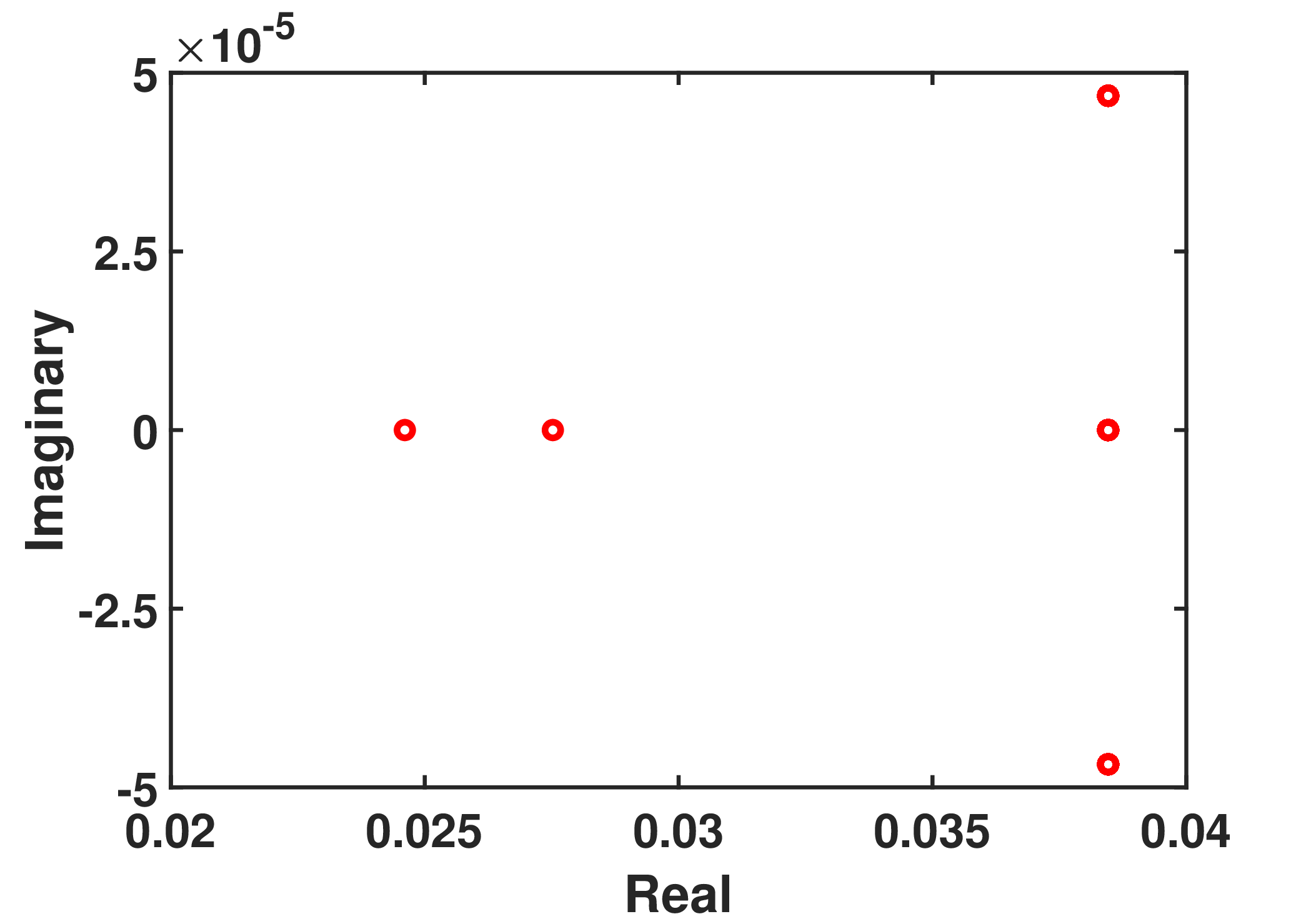}
					\caption{ \footnotesize   $\P_{LPESS}^{-1}\A$ with $s=26$}
					\label{LPESS_STOKES}
				\end{subfigure}
				\caption{Spectral distributions of $\A, \P_{IBD}^{-1}\A,  {\P_{MAPSS}^{-1}\A,  \P_{SL}^{-1}\A,} \P_{SS}^{-1}\A, \P_{EGSS}^{-1}\A,   {\P_{RPGSS}^{-1}\A,} \P_{PESS}^{-1}\A$ and $\P_{LPESS}^{-1}\A$ for Case II with ${\bf h}=1/8.$}
				\label{fig5}
			\end{figure}
  In Table \ref{tab3}, we list numerical  results produced using the {\it GMRES} process and {\it PGMRES} process with the preconditioners $\P_{IBD},  {\P_{MAPSS}, \P_{SL},} \P_{SS}, \P_{RSS}, \P_{EGSS},  {\P_{RPGSS},}$ $ \P_{PESS}$ and $\P_{LPESS}$  for different grid parameter values of $\textbf{h}.$
  
 \vspace{2mm}
\noindent   {\textbf{Parameter selection:}}  Parameter choices  {for the proposed} preconditioners are made in two cases as in Example \ref{ex1}. However, in Case I, we take $\alpha=0.01, \beta=0.1,$  $\Lambda_1=0.01 I$ and $\Lambda_2=0.1I.$
    The parameter $\alpha$ in Case I is taken as in \cite{CAOSS19} and the parameters in Case II are taken as in \cite{EGSS23}.  For the {\it IBD} preconditioner, the matrices $\widehat{A}$ and $\widehat{S}$ and for  {\textit{MAPSS} preconditioner $\alpha$ and $\beta$}  are constructed as in Example \ref{ex1}. 
   
 \vspace{2mm}
\noindent  {\textbf{Results for experimentally found optimal parameter:}} In the interval $[10, 30]$, the empirical optimal choice for  $s$ is found to be $30$ for Case I and $26$ for Case II. Table \ref{tab3} shows that the {\it GMRES} process has a very slow convergence speed and also does not converge for $\textbf{h}=1/64,$  {$1/128$} within $7000$ iterations. The proposed preconditioners require almost five times fewer iterations compared to the  {\it IBD} preconditioner for convergence. Moreover, in both cases, the \textit{PESS} and \textit{LPESS} preconditioners outperform the  {\textit{MAPSS, SL},} {\it SS, RSS, EGSS} and  {\it RPGSS} preconditioners in terms of IT and CPU times. 
 {Notably, for the \textit{PESS} preconditioner, the IT remains constant in both cases, whereas for the \textit{SS} and \textit{EGSS} preconditioners, the IT increases as the size of 
$\A$ increases.
Furthermore, in Case I with $\textbf{h}=1/64$, our proposed \textit{LPESS}  preconditioner is approximately  $45\%$, $33\%$, $34\%,$ $65\%,$ $35\%,$ $66\%$ and $36\%$ more efficient than the existing  {\it IBD, MAPSS, SL, SS, RSS, EGSS} and {\it RPGSS}  preconditioners, respectively.} Similar patterns are observed for other values of $\textbf{h}.$

   \begin{figure}[]
				\centering
				\begin{subfigure}[b]{0.38\textwidth}
					\centering
                         \includegraphics[width=\textwidth]{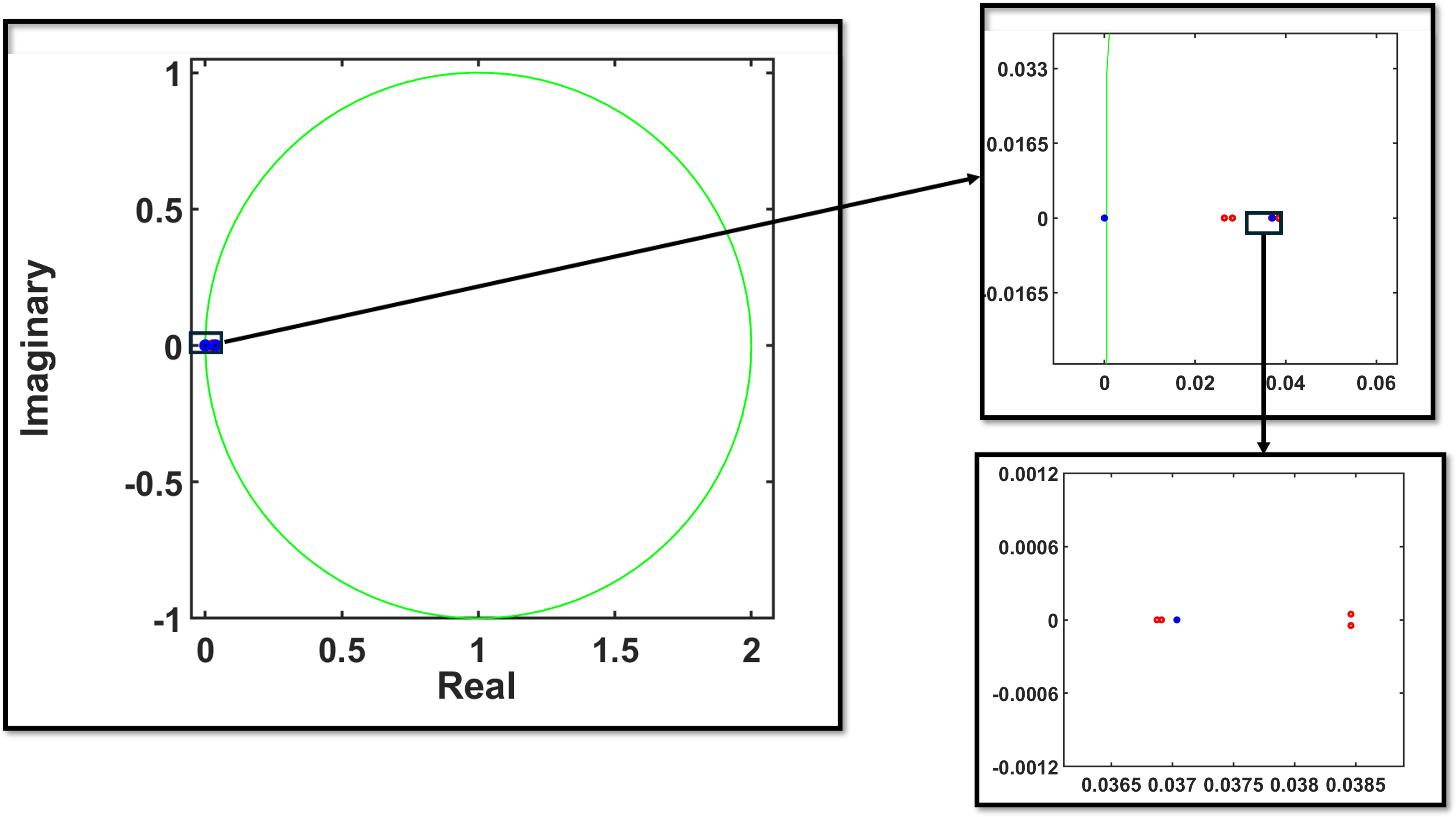}
					\caption{ \footnotesize   $\P_{PESS}^{-1}\A$ with $s=26$}
					\label{PESS_STOKES_bounds}
				\end{subfigure}
   \hfil
				\begin{subfigure}[b]{0.55\textwidth}
					\centering
                         \includegraphics[width=\textwidth]{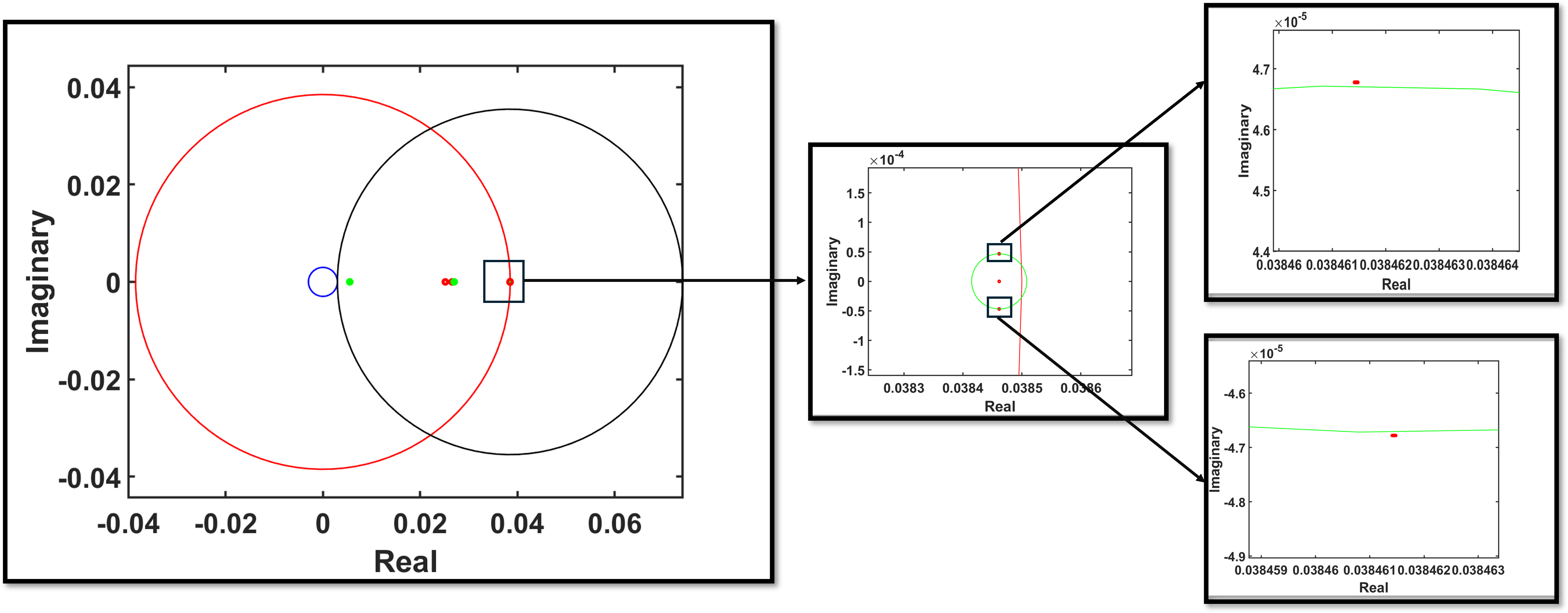}
					\caption{ \footnotesize   $\P_{LPESS}^{-1}\A$ with $s=26$}
					\label{LPESS_STOKES_bounds}
				\end{subfigure}
			\caption{{Spectral bounds for $ \P_{PESS}^{-1}\A$ and $\P_{LPESS}^{-1}\A$ for Case II with ${\bf h}=1/8$  for Example \ref{ex2}.}}
				\label{fig:stokesnew}
			\end{figure}
   \vspace{2mm}
    \noindent  {\textbf{Results using parameters selection strategy in Section \ref{Sec:parameter}:} To demonstrate the effectiveness of the proposed \textit{PESS} and \textit{LPESS} preconditioners using the parameters discussed in Section \ref{Sec:parameter}, we present the numerical test results for \textit{PESS-I}, \textit{LPESS-I}, \textit{PESS-II} and \textit{LPESS-II} in Table \ref{tab:rev2} as in Example \ref{ex1}. Comparing the results in Tables \ref{tab3} and \ref{tab:rev2}, we observe that the parameter selection strategy described in Section \ref{Sec:parameter} is effective.}
    \begin{table}[ht!]
       \centering
        \caption{ {Spectral bounds for  $\P_{PESS}^{-1}\A$ and $\P_{LPESS}^{-1}\A$ for case II with $l=16$ for Example \ref{ex2}}}
    {   \begin{tabular}{c|c|c}
       \hline 
         &  Real eigenvalue $\lambda$& Non-real eigenvalue $\lambda$ \\[1ex]
           \hline & Bounds of Theorem \ref{th42}&  Bounds of Theorem \ref{th43}\\[1ex]
      \hline
      \multirow{3}{*}{  $\P_{PESS}^{-1}\A$  }  & contained in  &  $0.0334 \leq |\lambda|\leq  0.375$  \\[0.5ex]
      &  the interval & $3.3624\times 10^{-6}\leq \Re(\lambda/(1-s\lambda))\leq 0.5$\\[0.5ex]
      &$(0, 0.0370]$ & $|\Im(\lambda/(1-s\lambda))|\leq 31.6386$\\[1ex]
      \hline 
      & Bounds of Theorem \ref{theorem:RPESS}&  Bounds of Theorem \ref{theorem:RPESS}\\[1ex]
      \hline
    \multirow{3}{*}{ $\P_{LPESS}^{-1}\A$  }    &contained in & $0.0030\leq |\lambda|\leq 0.0385$ \\[0.5ex]
    & the interval & $ 4.6722\times 10^{-5}\leq |\lambda-\frac{1}{s}|\leq 0.0355$\\[0.5ex]
    &$(0.00552, 0.0270]$ &\\[0.5ex]
    \hline
       \end{tabular}}
      
       \label{tab:eigenvalue_Ex2}
   \end{table}
   
  \vspace{2mm}
\noindent  {\textbf{Convergence curves:}}  The convergence curves in Figure \ref{fig4} demonstrate the rapid convergence of the proposed \textit{PESS} and \textit{LPESS} \textit{PGMRES}  processes compared to the  {\it IBD,  {MAPSS, SL,} SS, RSS, EGSS} and  {\it RPGSS}  in terms of RES versus IT counts.
 
 \vspace{2mm}
\noindent  {\textbf{Spectral distributions:}}
   Figure \ref{fig5}  illustrates the spectral distributions of the original matrix $\A,$ 
			and  the preconditioned matrices $\P_{IBD}^{-1}\A,$ $  {\P_{MAPSS}^{-1}\A,}$ $ {\P_{SL}^{-1}\A,}$ $ \P_{SS}^{-1}\A, \P_{RSS}^{-1}\A,$ $ \P_{EGSS}^{-1}\A,$ $ {\P_{PRGSS}^{-1}\A,}$ $\P_{PESS}^{-1}\A$  and $\P_{LPESS}^{-1}\A$ for Case II with ${\bf{h}}= 1/8.$ According to Figure \ref{fig5}, eigenvalues of  $\P_{PESS}^{-1}\A$ and $\P_{LPESS}^{-1}\A$ demonstrate a superior clustering compared to the other preconditioned matrices, which leads to a favorable convergence speed for the proposed \textit{PESS} and \textit{LPESS} {\it PGMRES} processes.  
   
\vspace{2mm}  
\noindent {\textbf{Spectral bounds:} In Figure \ref{fig:stokesnew}, we draw the spectral bounds of Theorems \ref{th41}, \ref{th42} and \ref{theorem:RPESS} for  \textit{PESS} and \textit{LPESS} preconditioned matrices. In Figure \ref{fig:stokesnew}(a),  $|\lambda-1|=1$ of Theorem \ref{th41} is drawn by the green unit circle, while the points in blue indicate the bounds from Theorem \ref{th42}. Moreover, we draw the circles  $C_1$ (in red), $C_2$ (in blue), $C_3$ (in black) and $C_4$ (in green) in Figure \ref{fig:stokesnew}(b). We can observe that the eigenvalues of the preconditioned matrix  $\P_{LPESS}^{-1}\A$ lie in the intersection of the annulus of the circles $C_1$ and $C_2,$ and the annulus of the circles $C_3$ and  $C_4.$  Additionally, the eigenvalue bounds derived in Theorems \ref{th42}, \ref{th43} and \ref{theorem:RPESS} are listed in Table \ref{tab:eigenvalue_Ex2}}.

     \begin{figure}[ht!]
				\centering
				\begin{subfigure}[b]{0.4\textwidth}
					\centering					\includegraphics[width=\textwidth]{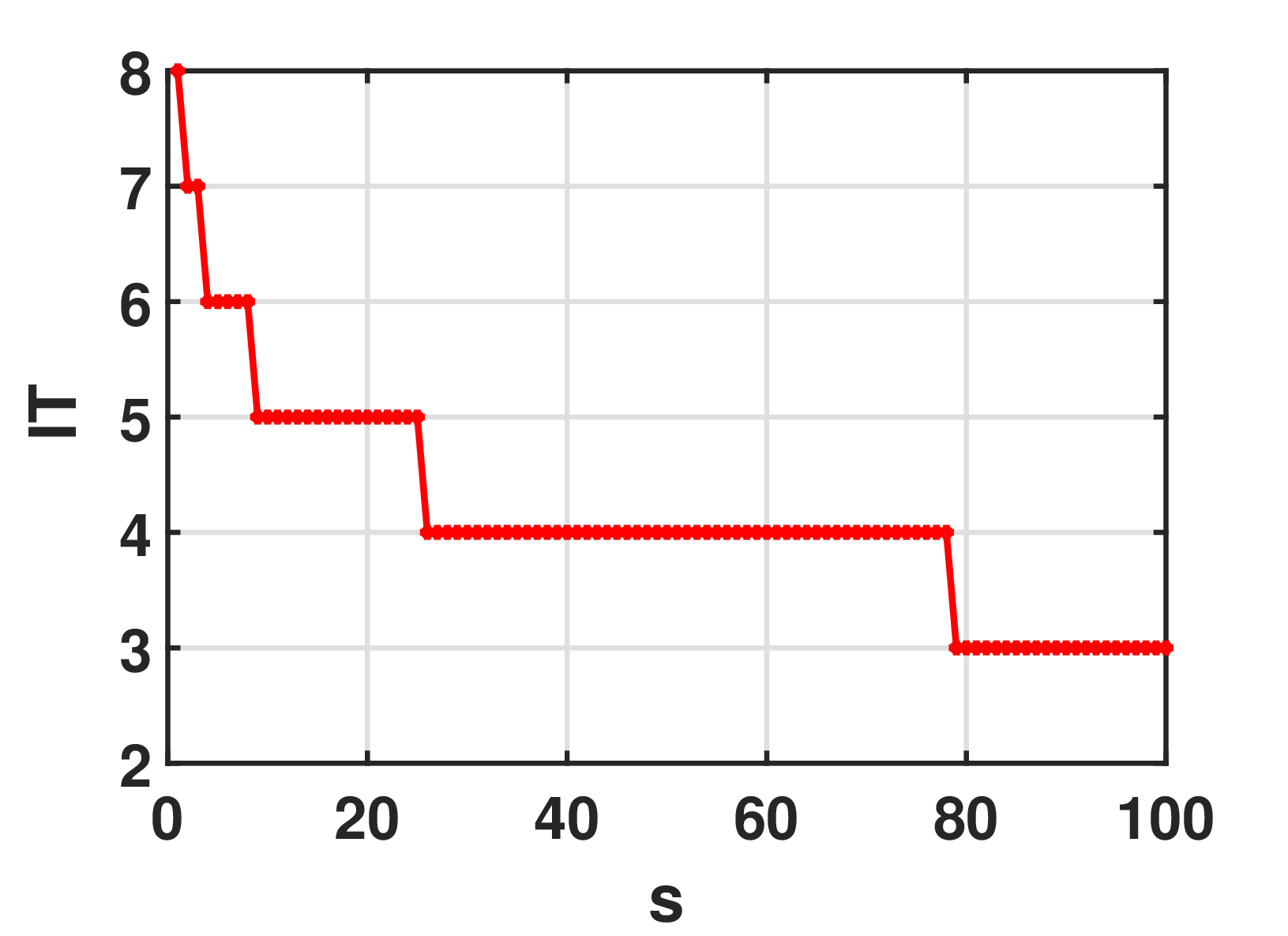}
					\caption{\footnotesize  \textit{PESS} \textit{PGMRES} process}
					\label{cpu1}
				\end{subfigure}
				\begin{subfigure}
					[b]{0.4\textwidth}
					\centering
					\includegraphics[width=\textwidth]{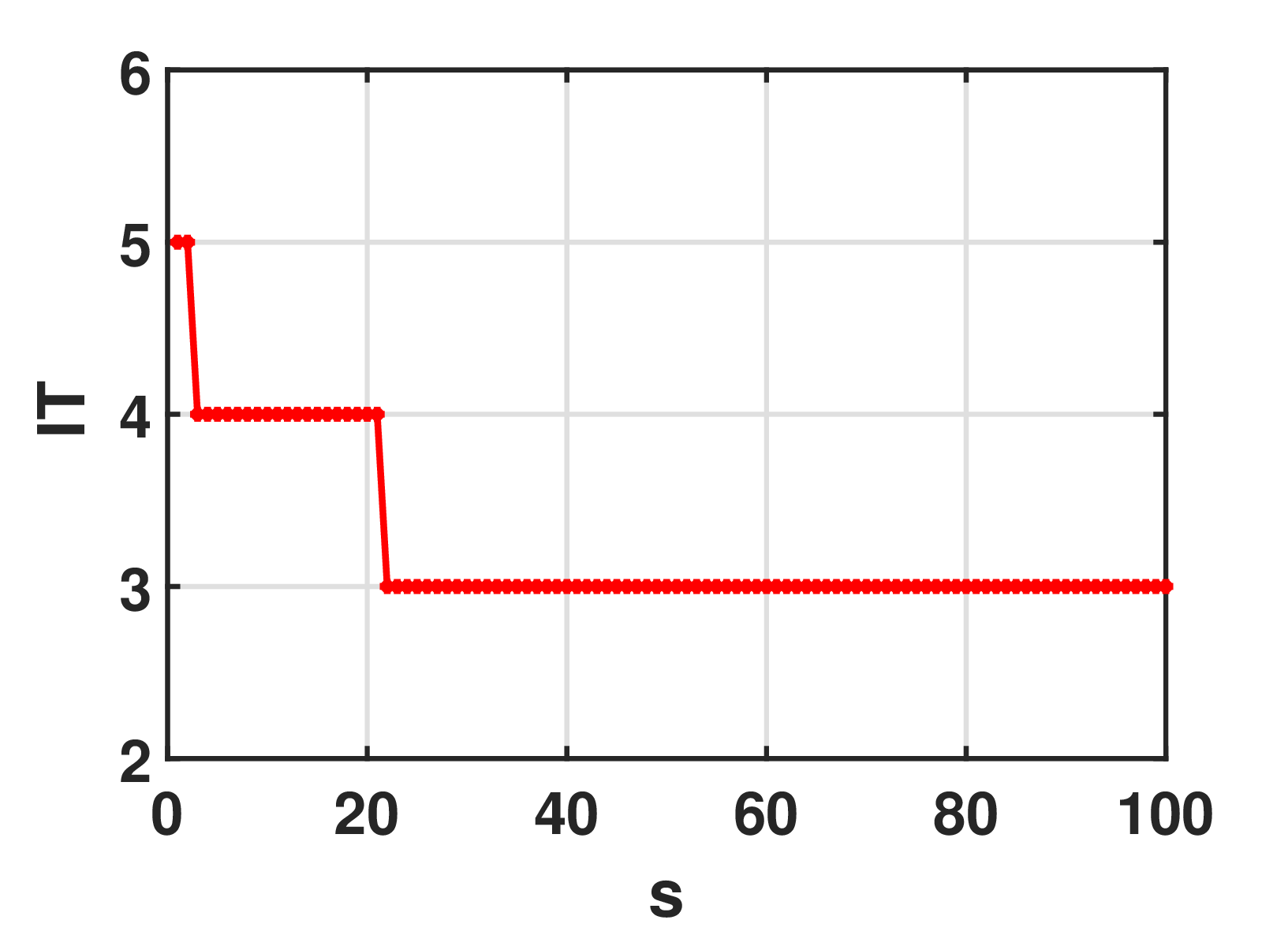}
					\caption{\footnotesize  \textit{LPESS} \textit{PGMRES} process}
					\label{IT2}
				\end{subfigure}
				\caption{Characteristic curves for   IT  of the proposed \textit{PESS} (left) and \textit{LPESS} (right) \textit{PGMRES} processes by varying $s$ in the interval $[1, 100]$ with step size $1$ with $\textbf{h}= 1/16$ in Case I for Example \ref{ex2}.}
				\label{IT_vs_s}
			\end{figure}
 \vspace{2mm}
\noindent  {\textbf{Condition number analysis:}}  Furthermore, the system \eqref{eq11} exhibits ill-conditioning nature with $\kappa(\A)= 5.0701e+05.$  While for Case I with $\mathbf{h}=1/16$, $\kappa(\P_{PESS}^{-1}\A)=1.3353$ and  $\kappa(\P_{LPESS}^{-1}\A)=1.2056,$  indicating that the proposed \textit{PESS} and \textit{LPESS} preconditioned systems are well-conditioned, ensuring an efficient and robust solution. 

 \vspace{2mm}
\noindent  {\textbf{Relationship between $s$ and convergence speed:}} In addition, to demonstrate the relationship of the parameter $s$ and the convergence speed of the \textit{PESS} and \textit{LPESS} preconditioners, we plot graphs of IT counts by varying the parameters $s$ in the interval  $[1,100]$ with step size one in Figure \ref{IT_vs_s}. We consider $\textbf{h}=1/16$ and other choices for $\Lambda_1,\Lambda_2$ and $\Lambda_3$ as in Case I. Figure \ref{IT_vs_s} shows that, with the increasing value of $s,$  decreasing trend in the IT counts for both the \textit{PESS} and \textit{LPESS} preconditioner.  Moreover, for $s>22,$  using \textit{LPESS} preconditioner, we can solve this three-by-three block $SPP$  only in $3$ iterations.
		\end{exam}	
 {  \begin{exam}\label{exam3}\cite{OPTMIZATION1,  HuangNA}
      \begin{table}[ht!]
					\centering
			\caption{ Numerical results of {\it GMRES}, {\it IBD},  {\textit{MAPSS}, \textit{SL},} {\it SS}, {\it RSS}, {\it EGSS},  {\it RPGSS,} \textit{PESS} and {\it LPESS PGMRES} processes for Example \ref{exam3}.}
			\label{tab:exam3}
			\resizebox{13cm}{!}{
			 {	\begin{tabular}{cccccc}
					\toprule
					Process& Problem	&aug2dc& aug3dc & listwet12 & yao   \\
					\midrule 
				& size($\A$)	& $50400$ &$8746$& $30004$ & $6004$   \\
					\midrule
					\multirow{3}{*}{\it GMRES}& IT& $370$& $78$& $2251$&$49$ \\
					& CPU&  $360.1613$ & $14.2486$& $422.8209$& $13.9152$\\
					&{\tt RES}&$9.9905e-07$&$9.2839e-08$&$9.9700e-08$&$9.7483e-08$ \\	
					\midrule
					\multirow{3}{*}{\it IBD}& IT& $5$&$5$ & $28$& $25$ \\
					& CPU&$278.7707$ &$4.80051$ & $278.8961$& $21.9511$\\
					&{\tt RES}&$2.1187e-15$ & $7.5842e-09$ & $8.3961e-07$ & $9.4491e-07$  \\
      \midrule
					\multirow{3}{*}{ {\it MAPSS}}&  {IT}&  {$8$} & {$8$} & {$124$} &  {$36$}   \\
					&  {CPU}&  {$324.3828$}&
 {$7.6662$}&
 {$2078.9607$}&
 {$18.6853$}\\
					&{ {\tt RES}}& {$3.6981e-07$}&
 {$1.7312e-09$}&
 {$9.2628e-07$}&
 {$7.6716e-07$} \\
 \midrule
					\multirow{3}{*}{ {\textit{SL}}}&  {IT}&  {$65$} & {$14$} & {$15$} &  {$37$}    \\
					&  {CPU}&  {$2848.2117$}&
 {$14.4158$}&
 {$270.3901$}&
 {$36.7559$}\\
					&{ {\tt RES}}& {$8.4621e-08$}&
 {$2.0305e-08$}&
 {$8.2851e-08$}&
 {$9.8555e-09$}  \\
					\midrule
					\multirow{3}{*}{\it SS}& IT& $23$& $6$& $7$& $39$\\
					& CPU&$1547.7320$&$8.9176$&$200.8205$&$28.9668$  \\
					&{\tt RES}&$9.9399e-07$&
$7.7120e-07$&
$3.5756e-07$&
$9.5393e-07$ \\
					\midrule
					\multirow{3}{*}{\it RSS}& IT& $23$& $6$& $5$& $16$  \\
					& CPU&$1774.9981$&
$8.6199$&
$161.8405$&
$18.5057$\\
					&{\tt RES}&$5.5282e-07$&
$2.6941e-07$&
$4.1147e-07$&
$8.2434e-07$\\
					\midrule
					\multirow{3}{*}{\it EGSS}& IT& $5$& $5$& $4$& $6$  \\
					& CPU&$350.3081$&
$7.61206$&
$124.0637$&
$6.9551$ \\
					&{\tt RES}&$3.9934e-12$&
$1.1711e-12$&
$1.5556e-07$&
$2.0227e-07$  \\
\midrule
					\multirow{3}{*}{ {\it RPGSS}}&  {IT}&  {$4$} & {$4$} & {$3$} &  {$5$}  \\
					&  {CPU}&  {$297.3382$}&
 {$6.16249$}&
 {$96.5956$}&
 {$5.0133$}  \\
					&{ {\tt RES}}& {$3.2268e-09$}&
 {$2.8160e-10$}&
 {$7.1294e-08$}&
 {$9.2318e-08$} \\
					\midrule
					\multirow{3}{*}{\textit{PESS}$^{\dagger}$}& IT& $3$&$3$ &$2$ & $3$\\
					& CPU&$249.0203$&
$5.5716$&
$68.5225$&
$4.8778$  \\
					$s=30$ &\tt RES&$8.3682e-08$&
$5.7233e-08$&
$8.3159e-07$&
$3.3146e-07$ \\
					\midrule
					\multirow{3}{*}{\textit{LPESS}$^{\dagger}$}& IT& $3$& $3$& $2$& $3$ \\
					& CPU&$237.0945$&
$5.4662$&
$67.0810$&
$3.9688$ \\
					$s=30$ &{\tt RES}&$1.0780e-09$&
$8.4676e-09$&
$4.3037e-07$&
$3.7858e-07$ \\
					\bottomrule
					\multicolumn{6}{l}{Here ${\dagger}$ represents the proposed preconditioners. The boldface represents the top }\\
					\multicolumn{6}{l}{two results. $\bf{--}$ indicates that the iteration process does not converge }\\ \multicolumn{6}{l}{within the prescribed IT.}
				\end{tabular}}
 			}
		\end{table} 
   \textbf{Problem formulation:}     In this example, we consider the three-by-three block \textit{SPP} \eqref{eq11}  arising from the following quadratic programming problem: 
      \begin{align}
          &\min_{x\in \R^n,\, y\in \R^p}\frac{1}{2}x^TAx+f^Tx+h^Ty\\ 
          & \text{such that}~~ Bx+C^Ty=b,
      \end{align}
      where $f\in \R^n$ and $h\in \R^p.$ To solve the minimization problem, we define the Lagrange function: 
      \begin{equation}
          L(x,y,\lambda)=\frac{1}{2}x^TAx+f^Tx+h^Ty+\lambda^T(Bx+C^Ty-b),
      \end{equation}
      where $\lambda\in \R^m.$ Then the Karush-Kuhn-Tucker \cite{KKT} necessary conditions are as follows: 
      \begin{eqnarray*}
          \nabla_{x}L(x,y,\lambda)=0,~ \nabla_{y}L(x,y,\lambda)=0 ~\text{and}~ \nabla_{\lambda}L(x,y,\lambda)=0,
      \end{eqnarray*}
      which can be reformulated as a three-by-three block \textit{SPP} of the form \eqref{eq11}.
      \begin{table}[ht!]
       \centering
         \caption{  Numerical results of \textit{PESS-I}, \textit{LPESS-I}, \textit{PESS-II} and \textit{LPESS-II} \textit{PGMRES} processes for Example \ref{exam3}.}
          {\resizebox{12cm}{!}{
         \begin{tabular}{ccccccc}
						\toprule
					Process& Problem	&aug2dc& aug3dc & listwet12 & yao   \\
					\midrule 
				& size($\A$)	& $50400$ &$8746$& $30004$ & $6004$   \\
    \midrule
						\multirow{1}{*}{{\it PESS-I}}& IT& $5$&$3$&$5$&$6$\\
					($s=1,$ $\Lambda_1=0.001I,$	& CPU&  $520.7330$&$10.1460$&$310.5960$&$5.7449$ \\
					$\Lambda_2=0.1I,$ $\Lambda_3=0.001I$)	&{\tt RES}&$ 5.4476e-08$&
$1.1047e-08$&
$1.5581e-07	$&
$2.9433e-07$\\		 
      \midrule
      \multirow{1}{*}{{\it LPESS-I}}& IT& $4$&$3$&$3$&$3$\\
      			($s=1,$	$\Lambda_2=0.1I,$	& CPU&  $337.9530$&$8.3194$&$118.2540$&$3.5354$ \\
        $\Lambda_3=0.001I$)    &{\tt RES}&  $5.4773e-09$ & $2.8655e-07$ & $2.0805e-08$&$ 1.5576e-07$& \\
      \midrule
      \multirow{1}{*}{{\it PESS-II}}& IT& $3$&$3$&$2$&$3$\\
				($s=s_{est},$ $\Lambda_1=A,$		& CPU&  $236.4489$&$8.0818$&$74.9301$&$3.7349$\\ 	
  $\Lambda_2=\beta_{est}I,$ $\Lambda_3=10^{-4}CC^T$) &{\tt RES}&  $3.0982e-08$ & $5.8697e-09$ & $7.8956e-09$&$ 6.3723e-07$\\
   \midrule 	
      \multirow{1}{*}{{\it LPESS-II}}& IT& $3$&$3$&$2$&$2$\\
			($s=s_{est},$	$\Lambda_2=\beta_{est}I,$		& CPU&  $282.3466$&$8.3194$&$114.7483$&$2.8330$\\ 
   $\Lambda_3=10^{-4}CC^T$)    &{\tt RES}&  $3.7409e-09$&$1.0390e-09$&$1.0465e-08$&$6.8900e-08$\\
      \midrule
       \end{tabular}}}
       \label{tab:rev3}
   \end{table}
 Data matrices $A$, $B$ and $C$ are selected from the CUTEr collection \cite{cuter2003}. The numerical results for the problems ‘aud2dc’, ‘aud3dc’, ‘liswet12’ and ‘yao’ are presented in Table \ref{tab:exam3}.
   
   \vspace{2mm}
\noindent \textbf{Parameter selection:} The parameter selection for the \textit{IBD} and \textit{MAPSS} preconditioners are as in Example \ref{ex1}. For the \textit{SS}, \textit{RSS},\textit{ EGSS}, \textit{RPGSS}, \textit{PESS} and \textit{LPESS} preconditioners, the parameters are chosen the same as in Case II of Example 1 with $\alpha = 0.1$ and $\Lambda_1 = 0.1I$.

 \vspace{2mm}
\noindent\textbf{Results for experimentally found optimal parameter:}  For the proposed preconditioner, the experimentally found parameter $s$ is $30$ in the interval $[10, 30].$  The numerical results presented in Table \ref{tab:exam3} demonstrate that the proposed \textit{PESS} and \textit{LPESS} preconditioners significantly outperform existing baseline preconditioners in terms of IT and CPU time. The proposed preconditioners can solve the there-by-three block \textit{SPP} within $2$ or $3$ iterations. Furthermore, for the problem `liswet12',  our proposed \textit{LPESS}  preconditioner is approximately  $76\%$, $97\%$, $75\%,$ $66\%,$ $59\%,$ $46\%$ and $31\%$ more efficient than the existing  {\it IBD, MAPSS, SL, SS, RSS, EGSS} and {\it RPGSS}  preconditioner, respectively.
  
  \vspace{2mm}
\noindent \textbf{Results using parameters selection strategy in Section \ref{Sec:parameter}: } To further illustrate the effectiveness of the proposed preconditioners under the parameter selection strategy discussed in Section \ref{Sec:parameter}, we reported the numerical results in Table \ref{tab:rev3} for the preconditioners \textit{PESS-I}, \textit{LPESS-I}, \textit{PESS-II} and  \textit{LPESS-II} (as discussed in Example $1$). A comparison of the results in Table \ref{tab:rev3} with the existing baseline preconditioners listed in Table \ref{tab:exam3} indicates that the proposed preconditioners perform well when the parameter $s$ is selected as outlined in Section \ref{Sec:parameter}.
      \end{exam}
      }
      
  \section{Conclusion}\label{sec6}
		In this paper, we proposed the \textit{PESS} iterative process and corresponding \textit{PESS} preconditioner and its relaxed variant \textit{LPESS} preconditioner to solve the three-by-three block {\it SPP} (\ref{eq11}). For the convergence of the proposed \textit{PESS} iterative process, necessary and sufficient criteria are derived. Moreover, we estimated the spectral bounds of the proposed \textit{PESS} and \textit{LPESS} preconditioned matrices.  This empowers us to derive spectral bounds for {\it SS} and {\it EGSS} preconditioned matrices, which have not yet been investigated in the literature to the best of our knowledge.  Numerous experimental analyses are performed to demonstrate the effectiveness of our proposed \textit{PESS} and \textit{LPESS} preconditioners. The key observations are as follows: $(i)$ the proposed \textit{PESS} and \textit{LPESS} preconditioners are found to outperform the existing baseline preconditioners in terms of IT and CPU times. $(ii)$ The proposed preconditioners significantly reduces the condition number of $\A$, consequently showing  {their} proficiency in solving three-by-three block {\it SPPs}. $(iii)$ The proposed \textit{PESS} and \textit{LPESS} preconditioned matrices have  {better} clustered spectral distribution than the baseline preconditioned matrices. $(iv)$ Sensitivity analysis conducted by introducing different percentages of noise on the system \eqref{eq11} showcases the robustness of the proposed \textit{PESS} preconditioner. 
		
  \section*{Declaration of competing interest}
		The authors disclose that they have no conflicting interests.
		
  \section*{Acknowledgments}
		Pinki Khatun sincerely acknowledges the Council of Scientific $\&$ Industrial Research (CSIR) in New Delhi, India, for financial assistance in the form of a fellowship (File no. 09/1022(0098)/2020-EMR-I).
		\bibliography{mybib}
		\bibliographystyle{abbrvnat}
	
 \end{document}